\newcommand{\nc}{\newcommand}
\numberwithin{equation}{section}
\newenvironment{blue}{\relax\color{blue}}{\hspace*{.5ex}\relax}
\newcommand{\bj}{\begin{jaune}}
\newcommand{\ej}{\end{jaune}}
\newcommand{\bebl}{\begin{blue}}
\newcommand{\ebl}{\end{blue}}
\theoremstyle{plain}
\newtheorem*{theorem*}{Theorem}
\newtheorem{lemma}{Lemma}[section]
\newtheorem{prop}[lemma]{Proposition}
\newtheorem{theorem}[lemma]{Theorem}
\newcommand{\Prop}{\begin{prop}}
\newcommand{\enprop}{\end{prop}}
\newcommand{\Lemma}{\begin{lemma}}
\newcommand{\enlemma}{\end{lemma}}
\newcommand{\Th}{\begin{theorem}}
\newcommand{\enth}{\end{theorem}}
\newtheorem{corollary}[lemma]{Corollary}
\newcommand{\Cor}{\begin{corollary}}
\newcommand{\encor}{\end{corollary}}
\newtheorem{definition}[lemma]{Definition}
\newtheorem{conjecture}[lemma]{Conjecture}
\newcommand{\Def}{\begin{definition}}
\newcommand{\edf}{\end{definition}}
\newtheorem{sublemma}[lemma]{Sublemma}
\newcommand{\Sub}{\begin{sublemma}}
\newcommand{\ensub}{\end{sublemma}}
\theoremstyle{definition}
\newtheorem{remark}[lemma]{Remark}
\newtheorem{Convention}[lemma]{Convention}
\newcommand{\Conv}{\begin{Convention}}
\newcommand{\enconv}{\end{Convention}}
\nc{\Con}{\begin{conjecture}}
\nc{\encon}{\end{conjecture}}
\nc{\Rem}{\begin{remark}}
\nc{\enrem}{\end{remark}}
\newcommand{\Q}{\mathbb {Q}}
\newcommand{\Z}{{\mathbb Z}}
\newcommand{\B}{{\mathbf{B}}}
\newcommand{\one}{{\bf{1}}}
\newcommand{\seteq}{\mathbin{:=}}
\newcommand{\hd}{{\operatorname{hd}}}
\newcommand{\soc}{{\operatorname{soc}}}
\newcommand{\g}{\mathfrak{g}}
\newcommand{\h}{\mathfrak{h}}
\newcommand{\n}{\mathfrak{n}}
\newcommand{\Uq}[1][{\mathfrak{g}}]{{U_q(#1)}}
\newcommand{\Hom}{\operatorname{Hom}}
\newcommand{\HOM}{\operatorname{H\textsc{om}}}
\nc{\N}{\Z_{\ge0}}
\newenvironment{myequation}
{\relax\setlength{\arraycolsep}{1pt}\begin{eqnarray}}
{\end{eqnarray}}
\newenvironment{myequationn}
{\relax\setlength{\arraycolsep}{1pt}\begin{eqnarray*}}
{\end{eqnarray*}}
\nc{\eq}{\begin{myequation}}
\nc{\eneq}{\end{myequation}}
\nc{\eqn}{\begin{myequationn}}
\nc{\eneqn}{\end{myequationn}}
\newcommand{\hs}{\hspace*}
\newcommand{\To}[1][{\hs{2ex}}]{\xrightarrow{\,#1\,}}
\newcommand{\on}{\operatorname}
\newcommand{\Ker}{\on{Ker}}
\newcommand{\bni}{\be[{\rm(i)}]}
\newcommand{\bna}{\be[{\rm(a)}]}
\newcommand{\QED}{\end{proof}}
\newcommand{\Proof}{\begin{proof}}
\newcommand{\soplus}{\mathop{\mbox{\normalsize$\bigoplus$}}\limits}
\newcommand{\id}{\on{id}}
\newcommand{\ba}{\begin{array}}
\newcommand{\ea}{\end{array}}
\newcommand{\monoto}{\rightarrowtail}
\newcommand{\set}[2]{\left\{#1 \mid #2 \right\}}
\newcommand{\eqsub}{\begin{subequations}\begin{eqnarray}}
\newcommand{\eneqsub}{\end{eqnarray}\end{subequations}}
\newcommand{\ol}{\overline}
\nc{\la}{\lambda}
\nc{\lam}{\lambda}
\nc{\U}[1][\g]{U_q(#1)}
\nc{\te}{\tilde{e}}
\nc{\tei}{\tilde{e}_i}
\nc{\tf}{\tilde{f}}
\nc{\tfi}{\tilde{f}_i}
\nc{\tU}{\widetilde U_q(\g)}
\nc{\tE}{\tilde{E}}
\nc{\tF}{\widetilde{F}}
\nc{\tK}{\widetilde{K}}
\nc{\tk}{\tilde{k}}
\nc{\tkone}{\tk_{\ol{1}}}
\nc{\teone}{\tilde{e}_{\ol{1}}}
\nc{\tfone}{\tilde{f}_{\ol{1}}}
\nc{\teibar}{\tilde{e}_{\ol{i}}} \nc{\tfibar}{\tilde{f}_{\ol{i}}}
\nc{\tki}{{\tk}_{\ol {i}}}
\nc{\BZ}{{\mathbb{Z}}}
\nc{\al}{\alpha}
\nc{\qs}{{q}}
\nc{\lan}{\langle}
\nc{\ran}{\rangle}
\nc{\re}{{\mathrm{re}}}
\nc{\wt}{\operatorname{wt}}
\nc{\ch}{\operatorname{ch}}
\nc{\Um}[1][\g]{U^-_q(#1)}
\nc{\Ue}{U^+_q(\g)}
\nc{\eps}{\varepsilon}
\nc{\vphi}{\varphi}
\nc{\sphi}{\varphi^*}
\nc{\seps}{\varepsilon^*}
\nc{\nn}{\nonumber}
\def\max{{\mathop{\mathrm{max}}}}
\nc{\vph}{\varphi}
\nc{\cls}{{\operatorname{cl}}}
\nc{\Wt}{{\operatorname{Wt}}}
\nc{\Us}{U'_q(\g)}
\nc{\La}{\Lambda}
\nc{\tLa}{\widetilde\Lambda}
\nc{\ro}{{\rm(}}
\nc{\rf}{{\rm)}\xspace}
\nc{\norm}{{\mathrm{norm}}}
\nc{\qbox}{\quad\mbox}
\nc{\braid}{{\mathfrak{B}}}
\nc{\Ad}{\operatorname{Ad}}
\nc{\Aut}{\operatorname{Aut}}
\nc{\dt}[1]{\tilde{\tilde #1}}
\nc{\Sn}{S^{{\mathrm{norm}}}}
\nc{\aff}{{\rm{aff}}}
\nc{\rk}{{\mathrm{rk}}}
\nc{\tP}{\widetilde{P}}
\nc{\tW}{\widetilde{W}}
\nc{\Dyn}{\mathrm{Dyn}}
\nc{\tD}{\widetilde{\Delta}}
\nc{\height}[1]{{\operatorname{ht}}(#1)}
\nc{\bl}{\bigl(}
\nc{\br}{\bigr)}
\nc{\Hecke}{\mathrm{H}}
\nc{\HA}{\Hecke^{\mathrm{A}}}
\nc{\HB}{\Hecke^{\mathrm{B}}}
\newcommand{\scbul}{{\,\raise1pt\hbox{$\scriptscriptstyle\bullet$}\,}}
\nc{\vac}{{\phi}}
\nc{\Bt}{\B_\theta(\g)}
\nc{\be}{\begin{enumerate}}
\nc{\ee}{\end{enumerate}}
\nc{\low}{{\mathrm{low}}}
\nc{\upper}{{\mathrm{up}}}
\nc{\Zodd}{\Z_{\mathrm{odd}}}
\nc{\Ft}[1][n]{\mathbb{P}\mathrm{ol}_{#1}}
\nc{\Ftf}[1][n]{\widetilde{\mathbb{P}\mathrm{ol}}_{#1}}
\nc{\KA}{\on{K}^{\mathrm{A}}}
\nc{\KB}{\on{K}^{\mathrm{B}}}
\nc{\Res}{\on{Res}}
\nc{\Fc}[1][{n,m}]{\mathbf{F}_{#1}}
\nc{\tphi}{\tilde{\varphi}}
\nc{\CO}{\mathscr{O}}
\nc{\inte}{\mathrm{int}}
\nc{\vs}{\vspace*}
\nc{\tLt}{\widetilde{L}}
\nc{\tL}{\widetilde{\Lambda}}
\nc{\tu}{\tilde{u}}
\nc{\noi}{\noindent}
\nc{\heigh}{\mathfrak{t}}
\nc{\lowest}{\mathfrak{l}}
\nc{\rootl}{\mathsf{Q}}
\nc{\cl}{\colon}
\nc{\uqpg}{U'_q(\mathfrak g)}
\nc{\uq}{\uqpg}
\nc{\Oh}{\widehat{\mathcal{O}}}
\nc{\KLR}{KLR algebra}
\nc{\KLRs}{KLR algebras}
\nc{\cor}{\mathbf{k}}
\nc{\cora}{{\cor(A)}}
\nc{\haut}{\mathrm{ht}}
\nc{\tens}{\mathop\otimes}
\nc{\gmod}{\text{-$\mathrm{gmod}$}}
\nc{\gMod}{\text{-$\mathrm{gMod}$}}
\nc{\proj}{\text{-$\mathrm{proj}$}}
\nc{\gproj}{\text{-$\mathrm{gproj}$}}
\nc{\smod}{\text{-$\mathrm{mod}$}}
\nc{\Mod}{\text{-$\mathrm{Mod}$}}
\nc{\Rnorm}{R^{\mathrm{norm}}}
\nc{\Vhat}{\widehat{V}}
\nc{\F}{\mathcal{F}}
\def\T{{\mathcal T}}
\nc{\fd}[1][A]{\on{\mathrm{flat.dim}_{#1}}}
\nc{\bP}{{\mathbb{P}}}
\nc{\bPh}{\widehat{\mathbb{P}}}
\nc{\bK}[1][{n}]{\widehat{\mathbb{K}}_{#1}}
\nc{\bV}[1][{n}]{\widehat{V}^{\otimes{#1}}}
\nc{\bVK}[1][{n}]{\widehat{V}^{\otimes{#1}}_{\widehat{\mathbb{K}}}}
\nc{\hV}{\widehat{V}}
\nc{\opp}{\mathrm{opp}}
\nc{\col}{\colon}
\nc{\bnum}{\be[{\rm(i)}]}
\nc{\bnam}{\be[{\rm(a)}]}
\nc{\oep}{\epsilon}
\nc{\qtext}[1][{and}]{\quad\text{#1}\quad}
\nc{\qtextq}[1]{\quad\text{#1}\quad}
\nc{\longtwoheadrightarrow}[1][]{\xymatrix{\ar@{->>}[r]^-{{#1}}&}}
\nc{\epiTo}[1][]{\longtwoheadrightarrow[{#1}]}
\nc{\epito}{\twoheadrightarrow}
\nc{\monoTo}[1][]{\xymatrix{\ar@{>->}[r]^-{{#1}}&}}
\nc{\sym}{\mathfrak{S}}
\nc{\inp}[1]{{({#1})_{\mathrm{n}}}}
\nc{\rtl}{\rootl}
\nc{\wdt}{\widetilde}
\nc{\etens}{\boxtimes}
\nc{\ds}[1]{\mathrm{d}(#1)}
\nc{\rmat}[1]{{\mathbf{r}}_%
{\mspace{-2mu}\raisebox{-.6ex}{${\scriptstyle{#1}}$}}}
\nc{\rmats}[1]{{\mathbf{r}}_%
{\mspace{-2mu}\raisebox{-.6ex}{${\scriptscriptstyle{#1}}$}}}
\nc{\shc}{\mathcal{C}}
\nc{\shs}{\mathscr{S}}
\nc{\Fct}{{\on{Fct}}}
\nc{\tC}{\widetilde{\shc}}
\nc{\Zp}{\Z_{\ge0}}
\nc{\tPhi}{\widetilde{\Phi}}
\nc{\tT}{{\widetilde{\T}}}
\nc{\Ob}{\on{Ob}}
\nc{\bwr}{\mbox{\large$\wr$}}
\nc{\Img}{\on{Im}}
\nc{\Ab}{\mathcal{A}^{\mathrm{big}}}
\nc{\Sb}{\mathcal{S}^{\mathrm{big}}}
\nc{\As}{\mathcal{A}}
\nc{\Ss}{\mathcal{S}}
\nc{\ntens}{\widetilde{\otimes}}
\nc{\hR}{\widehat{R}}
\nc{\nconv}{\mathop{\mbox{\large $\odot$}}}
\nc{\snconv}{\mbox{\scriptsize$\odot$}}
\nc{\ts}{\tilde{s}}
\nc{\sho}{\mathcal{O}}
\nc{\bc}{\begin{cases}}
\nc{\ec}{\end{cases}}
\nc{\slnh}{{\widehat{\mathfrak{sl}}_N}}
\nc{\UA}{U_q'(\slnh)}
\nc{\cQ}{\mathcal{Q}}
\nc{\Irr}{\mathcal{I}rr}
\nc{\Irrsd}{\mathcal{I}rr_{\mathrm{sd}}}
\nc{\tQ}{\widetilde{\cQ}}
\nc{\bs}{\mathbf{s}}
\nc{\bL}{\mathbb{L}}
\nc{\tg}{\tilde{g}}
\nc{\conv}{\mathbin{\mbox{\large $\circ$}}}
\nc{\Rm}{R^{\mathrm{ren}}}
\nc{\bQ}{\ol{Q}}
\renewcommand{\Im}{\on{Im}}
\nc{\de}{\on{\textfrak{d}}}
\nc{\xmono}{\ar@{>->}}
\nc{\xepi}{\ar@{->>}}
\nc{\db}[1]{\raisebox{-.5ex}[2ex][1.8ex]{$#1$}}
\nc{\wb}[1]{\mbox{$\rule[-1.1ex]{0ex}{2ex}#1$}}
\nc{\univ}{\mathrm{univ}}
\nc{\rM}{{}^*\mspace{-2mu}M}
\nc{\lM}{M^*}
\nc{\uqm}{\uq\smod}
\nc{\tR}{\widetilde{R}_{\gamma,\beta}}
\nc{\tx}{\tilde{x}}
\nc{\bi}{\mathbf{i}}
\nc{\ttau}{\widetilde{\tau}}
\nc{\tEnd}{\on{\widetilde{E}nd}}
\nc{\tHom}{\on{\widetilde{H}om}}
\nc{\K}{{J}}
\nc{\Kex}{{\K}_{\mathrm{ex}}}
\nc{\Kfr}{{\K}_{\mathrm{\ms{.5mu}f\mspace{.01mu}r}}}
\nc{\coro}{\cor}
\nc{\tB}{\widetilde{B}}
\nc{\seed}{\mathscr{S}}
\nc{\qseed}{\mathcal{S}}
\nc{\up}{\mathrm{up}}
\nc{\bfa}{\mathbf{a}}
\newcommand{\Span}{{\rm Span}}
\newcommand{\rl}{\mathsf{Q}}   
\newcommand{\wl}{\mathsf{P}}   
\newcommand{\comp}{\Delta_+}
\newcommand{\comm}{\Delta_-}
\newcommand{\hconv}{\mathbin{\mbox{$\nabla$}}}
\newcommand{\sconv}{\mathbin{\mbox{$\Delta$}}}
\nc{\tensp}{\otimes_{_+}\mspace{-1mu}}
\nc{\tensm}{\otimes_{_-}\mspace{-1mu}}
\newcommand{\ri}{{\mspace{1mu}\rm r}}
\newcommand{\oi}{\overline{\iota}}
\nc{\bg}{{\oi_\g}}
\nc{\ms}{\mspace}
\nc{\An}{A_q(\n)}
\nc{\Anw}{A_q(\n(w))}
\nc{\tEs}{\widetilde{E}^*}
\def\max{{\mathop{\mathrm{max}}}}
\nc{\pn}{p_\n}
\nc{\dP}{\mathrm{E}^*}
\nc{\Up}{U_q^+(\g)}
\nc{\Ag}{A_q(\g)}
\nc{\QA}{\mathbf{A}}
\nc{\Pd}{\wl^+}
\nc{\Po}{\wl}
\nc{\De}[1]{\Delta(#1)}
\nc{\rt}{\ri}
\nc{\prtl}{\rtl_+}
\nc{\nrtl}{\rtl_-}
\nc{\lt}{\mathrm{l}}
\nc{\wtl}{\wt_\lt}
\nc{\wtr}{\wt_\rt}
\nc{\Cmp}{\comp}
\nc{\Cmm}{\comm}
\nc{\Cm}{\Delta}
\nc{\bolda}{{\bf a}}
\nc{\boldb}{{\bf b}}
\nc{\boldc}{{\bf c}}
\nc{\boldd}{{\bf d}}
\nc{\bolde}{{\bf e}}
\nc{\boldg}{{\bf g}}
\nc{\qt}{\quad\text}
\nc{\Cwt}{\widetilde{\mathcal C}_w}
\nc{\tCw}{\widetilde{\mathcal C}_w}
\nc{\Cw}{{\mathcal C}_w}
\nc{\forg}{\mathrm{for}}
\nc{\ake}[1][2ex]{\rule[-1.5ex]{0ex}{#1}}
\nc{\akew}[1][2ex]{\rule[-1ex]{#1}{0ex}}
\nc{\expR}[1][{\shs}]{\mathbf{g}^R_{#1}}
\nc{\expL}[1][{\shs}]{\mathbf{g}^L_{#1}}
\nc{\pR}{\phi^R}
\nc{\pL}{\phi^L}
\nc{\tAnw}{\widetilde{A_q}(\n(w))}
\nc{\shf}{\mathscr{F}}
\newcounter{myc}
\nc{\mcluster}{\mathscr C}
\newlength{\mylength}
\title
{Laurent phenomenon and simple modules of quiver Hecke algebras}
\author[M. Kashiwara and  M. Kim]{
Masaki Kashiwara$^{1}$ and  Myungho Kim$^{2}$}
\address[Masaki Kashiwara]{Research Institute for Mathematical Sciences, Kyoto University,
Kyoto 606-8502, Japan \& Korea Institute for Advanced Study, Seoul 02455, Korea }
         \email{masaki@kurims.kyoto-u.ac.jp}
\address[Myungho Kim]{Department of Mathematics, Kyung Hee University \\ Seoul 02447, Korea}
         \email{mkim@khu.ac.kr}
\thanks{$^1$ This work was supported by Grant-in-Aid for
Scientific Research (B) 22340005, Japan Society for the Promotion of
Science.}
\thanks{$^2$ This work was supported by the National Research Foundation of
Korea(NF) grant funded by the Korea government(MSIP) (No. NRF-2017R1C1B2007824).}
\keywords{cluster algebra, Laurent phenomenon, 
monoidal categorification,
quiver Hecke algebra, 
unipotent quantum coordinate ring} 
\subjclass[2010] {13F60, 81R50, 16G, 17B37}
\date{November 3, 2018}
\begin{document}

\begin{abstract}
In this paper, we study consequences of  the 
results of Kang et al.\ (\cite{KKKO18}) on  a
monoidal categorification of the unipotent quantum coordinate ring $A_q(\mathfrak{n}(w))$ together with the Laurent phenomenon of cluster algebras.
We show that if a simple module $S$ in the category $\mathcal C_w$ strongly commutes with all the cluster variables in a  cluster   $[ \mathscr C]$, 
then $[S]$ is a cluster monomial in $[ \mathscr C ]$.
If $S$ strongly commutes with cluster variables  except exactly one cluster variable $[M_k]$, then $[S]$ is either a cluster monomial in $[\mathscr C ]$ or a cluster monomial in $\mu_k([ \mathscr C ])$.
We give a new proof of the fact that the upper global basis is a common triangular basis (in the sense of Fan Qin (\cite{Qin17})) of the localization  $\widetilde A_q(\mathfrak{n}(w))$ of $A_q(\mathfrak{n}(w))$ at the frozen variables.
A characterization on the commutativity of a simple module $S$ with cluster variables in a cluster $[  \mathscr C]$ is given in terms of the denominator vector
of $[S]$ with respect to the  cluster   $[ \mathscr C]$.

\end{abstract}
\maketitle
\tableofcontents

\section*{Introduction}

The notion of cluster algebras was  introduced by Fomin-Zelevinsky (\cite{FZ02}). 
A cluster algebra is a $\Z$-subalgebra of a rational function field
generated by a special set of generators called the {\it cluster variables}, which are  grouped
into overlapping subsets, called  the {\it clusters} (\cite{FZ02}). A \emph{seed} is a pair of a cluster and  an \emph{exchange matrix} which encodes how one can produce  a new seed from an old one. This procedure is called  \emph{mutation}. 
The celebrated \emph{Laurent phenomenon} (\cite{FZ02}) asserts that every cluster variable can be written as a Laurent
polynomial of cluster variables in any other cluster. 

A monoidal categorification of a cluster algebra $A$ is an abelian monoidal category $\mathcal C$ such that its Grothendieck ring $K(\mathcal C)$ is isomorphic to $A$ as a ring and $\mathcal C$ is required to satisfy several additional properties  reflecting the cluster algebra structure of $A$ (\cite{HL10}).
One of such requirements is that the cluster monomials  (products of cluster variables in a cluster) belong to the set of
 the  classes of simple objects of the category $\mathcal C$.
As observed in \cite{HL10}, this condition and the Laurent phenomenon  immediately implies that the Laurent polynomial expansion  in  a cluster  of the  class of an object of $\mathcal C$ has non-negative coefficients. 
In particular, every cluster variable has a non-negative Laurent expansion in cluster variables of another cluster, which was one of the  central conjectures in the beginning of the cluster algebra theory.  
(Even this non-negativity turned out to be true in general (\cite{LS13}), but the method in \cite{LS13} is totally different from the above.)
It is an example  of  the synergy between monoidal categorification and Laurent phenomenon.

 In \cite{BZ05}, Berenstein-Zelevinsky introduced
the notion of  quantum cluster algebras  which is a $q$-analogue of cluster algebras. 
 The \emph{unipotent quantum coordinate ring $\Anw$}  associated with a Weyl group element $w$ has a structure of quantum cluster algebra (\cite{GLS}).  
It turns out that the quantum cluster monomials of $\Anw$ belong to the upper global basis (\cite{KKKO18}).
This result is accomplished by showing that
the cluster monomials in $\Anw$ is the 
class of simple modules in the category $\mathcal C_w$, where $\mathcal C_w$ is a certain monoidal category 
of finite-dimensional graded modules over a 
\emph{symmetric quiver Hecke algebra} (\cite{KKKO18}).  
Recall that the quiver Hecke algebras (or Khovanov-Lauda-Rouquier algebras) categorify the half of a quantum group (\cite{KL09, R08}), and 
when they are symmetric and the base field is of characteristic zero, the upper global basis corresponds to 
the set of  the isomorphism classes of self-dual simple modules (\cite{VV09, R11}).
Thus the main result in \cite{KKKO18} is 
that the category $\mathcal C_w$ is a monoidal categorification of 
 the quantum cluster algebras $\Anw$.
 In the course of proof, the \emph{$R$-matrices} between modules over symmetric quiver Hecke algebras played a crucial role.

\medskip

 The purpose of this paper  is a study of consequences coming from the 
 monoidal categorification of $\Anw$  and the Laurent phenomenon of cluster algebras.

 We start the application of Laurent  phenomenon  by
proving 
that if a simple module $S$ in $\mathcal C_w$ strongly commutes with all the cluster variables  
 in a given  monoidal cluster  
$\{M_j\}_{j}$  (i.e., $S\conv M_j$ are simple), then the class $[S]$ is a cluster monomial in the cluster  up to a power of $q$
(Theorem~\ref{th:clusterch}).
It follows that  if an upper global basis element $b$ of $\Anw$ $q$-commutes with the cluster variables $\{[M_j] \}$ in a cluster, 
then $b$ is a cluster monomial in the cluster
 up to a power of $q$.
 We can say that this seemingly non trivial result is 
 an  immediate consequence of the monoidal categorification in \cite{KKKO18} and the Laurent phenomenon. 
On a similar line of thought, we show in the last section that  if a simple object $X$ strongly commutes with all the cluster variables in a cluster  $[ \mcluster  ]$ except exactly one cluster variable $M_k$, 
then $[X]$ is either a cluster monomial in the cluster $[ \mcluster  ]$ or a cluster monomial in the mutation $\mu_k([ \mcluster  ])$ of $[ \mcluster  ]$ 
(Theorem~\ref{th:clusterch1}).

We investigate  further properties on the quantum cluster algebra $\tAnw$ in Section~\ref{sec:tri},
where  $\tAnw$ is the localization of $\Anw$ by the set of frozen variables. 
By the  \emph{localization of a monoidal category via a real commuting family of central objects}, which is a   procedure developed  in \cite{Local},
one can  identify the Grothendieck ring $K(\Cwt)$ of the localization $\Cwt$ of $\mathcal C_w$ with the quantum cluster algebra $\tAnw$
(Section~\ref{subsec:Loc}). 
We show that the  homogeneous  degrees of the  $R$-matrices between a cluster  monomial and cluster variables 
gives a natural order $\prec_{\shs}$ on the set $\Z^\K$, 
 the set of exponents of Laurent cluster monomials 
in each  monoidal  seed $\shs$. 
It turns out that for a simple module $S$ of $\mathcal C_w$, the Laurent cluster monomial expansion of $[S]$ in a  quantum seed $[\shs]$   has a unique maximal term and a unique minimal term with respect to $\prec_{\shs}$. Indeed, these maximal and minimal terms correspond to the head and the socle, respectively, when we multiply a cluster monomial to $S$   from the right
such that all its composition factors are again cluster monomials in $\shs$ (Lemma~\ref{lem:Mab}). 
By taking the degrees, we obtain 
natural maps $\expR$ and $\expL$, both 
from the set of the classes of self-dual simple  objects  in $\Cwt$ into $\Z^\K$. 
We show that the maps  $\expR$ and $\expL$ are bijective  (Corollary \ref{cor:parametrization}, Corollary \ref{cor:parametrizationL}).  
 This result is established in several steps: 
first, we show that  $\expR$  are bijective for a  family of special initial seeds. 
Then we calculate the transition map $\phi^R_{\mu_k(\shs),\shs}$ 

between 
 $\expR[\mu_k(\shs)]$ and $\expR$  which turns out to be  bijective,
so that   $\expR$ is bijective for any seed $\shs$.
Note that the map $\phi^R_{\mu_k(\shs),\shs}$ is called the \emph{tropical transformations}  (see \cite{FZ07, FG09, Qin17}) and we recover its formula using the results in \cite{KKKO18}.
Now the bijectivity of $\expL$ follows from the fact that every simple object has a right dual and a left dual  in the category $\Cwt$, which is proved in \cite{Local}, 
and the equality
$\expR(L)+\expL(R)  =0$,   which holds  for any dual pair $(L,R)$ 
 of simples in  $\Cwt$ 
(Lemma~\ref{lem:expanddual}).
 We  emphasize that the maps $\expL$ and $\phi^L_{\mu_k(\shs),\shs}$, which seem new, are as important as $\expR$ and $\phi^R_{\mu_k(\shs),\shs}$.

These properties are strongly related with  the notion of \emph{common triangular  bases} introduced by  Fan Qin \cite{Qin17}. 
 For example, 
 the order $\prec_{\shs}$ is the same as the  \emph{dominance order} given in \cite{Qin17} and 
the bijectivity of $\expR$ is one of the four conditions presented in \cite{Qin17} 
 for the upper global basis  to be a  common triangular basis. 
  Recently, Casbi (\cite{Casbi}) studied
a relationship between the dominance order and the order given by cuspidal decompositions of simples in $R\gmod$, when $R$ is of type $A_n$. 

 In section~\ref{subsec:common}, we give  a new proof of the statement:
the upper global basis of $\tAnw$ is a common triangular basis of the cluster algebra $\tAnw$ (Proposition~\ref{prop:triangular}).
Although this is not a new result (because it can be obtained from \cite{KKKO18}   together with \cite[Theorem 6.1.6]{Qin17} 
as explained  at the end of \cite[Section 1.2]{Qin17}), our approach here  is more direct and enlightening.

 In the last section, 
we show that if the class of a simple module $X$ is divisible by the  class of a real simple module $M$ in the Grothendieck ring, 
then $X$ is indeed isomorphic to a convolution product $Y \conv M$ for some simple module $Y$ (Theorem \ref{thm:commcv}). 
 Together with the Laurent phenomenon, this result helps us  to 
rewrite  the condition that  a simple module $X$ in $\mathcal C_w$ strongly commutes with a cluster variable  in $[ \mcluster  ]$, in terms of 
the \emph{denominator vector} $\boldd(X)$ of $[X]$ with respect to the  cluster   $[ \mcluster  ]$. 
Note that the denominator vector $\boldd(X)$, which is introduced in \cite{FZ02}, can be read easily from the Laurent polynomial expansion of $[X]$ in the  cluster   $[ \mcluster  ]$. 
The characterization enables us  to  provide new proofs of some conjectures in \cite{FZ07} on the denominator vectors  in the case of the cluster algebra $\tAnw|_{q=1}$.

\smallskip

This paper is organized as follows.
In Section~\ref{sec:pre}, we  review quantum unipotent coordinate rings, quantum cluster algebras,  quiver Hecke algebras and  their properties.
In Section~\ref{sec:moncatLaurent}, we recall the main result in \cite{KKKO18} and prove a consequence of it together with the Laurent phenomenon. 
We also prove that the  convolution products of the  cuspidal modules and the cluster variables in the initial  cluster  in a 
 suitable order have simple heads.
In Section~\ref{sec:moncatLaurent}, we collect the results on the   parameterizations  of simple objects in $\tCw$  emphasizing a relevance with the notion of common triangular basis.
In Section~\ref{sec:commden}, we study the relation between the commutativity of a simple module with cluster variables in a  monoidal cluster   and the denominator vectors of the simple module.

\medskip

\noindent
{\bf Acknowledgements.} 
We thank  Se-jin Oh and Euiyong Park   for many fruitful discussions.
The second author thanks  Fan Qin for explaining his results, and 
 gratefully acknowledges the hospitality of
Research Institute for Mathematical Sciences, Kyoto University
during his visits in 2018.

\section{Preliminaries} \label{sec:pre}
 In this paper, we restrict ourselves to the case of
symmetric generalized Cartan matrices and  symmetric quiver Hecke algebras.

\subsection{Quantum groups and unipotent quantum coordinate rings}
Let $I$
be an index set  (for simple roots) and 
let  $(A,\wl,\Pi,\wl^{\vee},\Pi^{\vee})$ be a  \emph{symmetric Cartan datum}.  It consists of a symmetric  \emph{generalized Cartan matrix} $A=(a_{i,j})_{i,j\in I}$,
a free abelian group $\wl$   called the  \emph{weight lattice}, 
a set  $\Pi=\{ \alpha_i \in \wl \mid \ i \in I \}$ of  linearly independent elements called  \emph{simple roots},  the group $\wl^\vee\seteq\Hom_\Z(\wl, \Z)$  called the \emph{coweight lattice},
 and a set $\Pi^\vee=\{ h_i \ | \ i \in I \}\subset \wl^{\vee}$  of \emph{simple coroots}. 
  We assume that 
$\langle h_i,\alpha_j \rangle = a_{ij}$ for all $i,j \in I$,
 and  for each $i \in I$, there exists $ \varpi_i \in \wl$ such that   $\langle h_j,  \varpi_i \rangle =\delta_{ij}$  for all $j \in I$.
The elements $ \varpi_i$ are called the \emph{fundamental weights}.
The free abelian group $\rootl\seteq\soplus_{i \in I} \Z \alpha_i$ is called the
\emph{root lattice}. Set $\rootl^{+}= \sum_{i \in I} \Z_{\ge 0}
\alpha_i\subset\rootl$ and $\rootl^{-}= \sum_{i \in I} \Z_{\le0}
\alpha_i\subset\rootl$. For $\beta=\sum_{i\in I}m_i\al_i\in\rootl$,
we set
$|\beta|=\sum_{i\in I}|m_i|$.
For $\beta\in \rootl^+$ with $n=|\beta|$, we set
$I^{\beta} \seteq \set{\nu = (\nu_1, \ldots, \nu_n) \in I^{n}}%
{ \alpha_{\nu_1} + \cdots + \alpha_{\nu_n} = \beta }.$
Note that the symmetric group $\sym_n=\lan s_1,\ldots, s_{n-1}\ran$ acts on $I^\beta$ by the place permutations.
Set $\mathfrak{h}=\Q \otimes_\Z \wl^{\vee}$.
Then there exists a non-degenerate symmetric bilinear form $(\quad , \quad)$ on
$\mathfrak{h}^*$ satisfying
$ \lan h_i,\lambda\ran=(\alpha_i,\lambda)$ for any $\lambda\in\mathfrak{h}^*$ and $i \in I$.
The  \emph{Weyl group} $W$ of $\g$ is the subgroup  $\lan r_i; \ i\in I\ran \leq GL(\mathfrak h^*)$, where
$
r_i(\la) := \la - \lan h_i, \la \ran \al_i \quad \text{for } \ \la \in \h^*, \ i \in  I.
$
An element  $\mu \in \wl$ is called a \emph{dominant integral weight} if $\lan h_i, \mu \ran \ge 0$ for all $i \in I$. Let us denote the set of dominant integral weights by $\wl^+$.

\medskip
Let $q$ be an indeterminate. 
The quantum group
associated with 
$(A,\wl,\Pi,\wl^{\vee}, \Pi^{\vee})$ is the  associative algebra $\U$ over
$\mathbb Q(q)$ generated by $e_i,f_i$ $(i \in I)$ and
$q^{h}$ $(h \in \wl^\vee)$ with the defining relations

{\allowdisplaybreaks
\begin{align*}
& q^0=1,\ q^{h} q^{h'}=q^{h+h'} \ \ \text{for} \ h,h' \in \wl^\vee,\\
& q^{h}e_i q^{-h}= q^{\lan h, \alpha_i\ran} e_i, \ \
          \ q^{h}f_i q^{-h} = q^{-\lan h, \alpha_i\ran} f_i \ \ \text{for} \ h \in \wl^\vee, i \in
          I, \\
& e_if_j - f_je_i = \delta_{ij} \dfrac{q^{h_i} -q^{-h_i}}{q- q^{-1}
} \ \  \text{for} \ i,j \in I, 
\\
& \sum^{1-a_{ij}}_{r=0} (-1)^r \left[\begin{matrix}1-a_{ij}
\\ r\\ \end{matrix} \right] e^{1-a_{ij}-r}_i
         e_j e^{r}_i =0 \quad \text{ if } i \ne j, \\*
& \sum^{1-a_{ij}}_{r=0} (-1)^r \left[\begin{matrix}1-a_{ij}
\\ r\\ \end{matrix} \right] f^{1-a_{ij}-r}_if_j
        f^{r}_i=0 \quad \text{ if } i \ne j,
\end{align*}
}
where $[k] = \dfrac{q^k-q^{-k}}{q-q^{-1}}$ for $k \in \Z_{\ge 0}$,
$[k]!=[1]\cdots[k]$ and
and $\left[\begin{matrix}n\\ m\\ \end{matrix} \right]
=\dfrac{[n]!}{[m]![n-m]!}$.

Note that 
$U_q(\g) = \bigoplus_{\beta \in \rootl} U_q(\g)_{\beta}$,
where $$U_q(\g)_{\beta}\seteq\set{ x \in U_q(\g)}{\text{$q^{h}x q^{-h}
=q^{\lan h, \beta \ran}x$ for any $h \in \wl$}}.$$

Let $U_q^{+}(\g)$ be the $\Q(q)$-subalgebra of
$U_q(\g)$ generated by $\set{e_i}{i \in I}$.
 Let $U_q^{+}(\g)_{\Z[q^{\pm1}]}$ be the  
$\Z[q^{\pm1}]$-subalgebra
of $U_q^{+}(\g)$ generated by $e_i^{(n)}\seteq e_i^n/[n]!$ ($i\in I$, $n\in\Z_{>0}$).

Let $\Delta_\n$ be the algebra homomorphism $U_q^+(\g) \to U_q^+(\g)
\tens U_q^+(\g)$ given by
$ \Delta_\n(e_i) = e_i \tens 1 + 1 \tens e_i$, where
the algebra structure on $U_q^+(\g) \tens U_q^+(\g)$ is defined by
$(x_1 \tens x_2) \cdot (y_1 \tens y_2) = q^{-(\wt(x_2),\wt(y_1))}(x_1y_1 \tens x_2y_2).$
Set
$$ A_q(\n) = \soplus_{\beta \in  \rl^-} A_q(\n)_\beta \quad \text{ where } A_q(\n)_\beta \seteq \Hom_{\Q(q)}(U^+_q(\g)_{-\beta}, \Q(q)).$$
Then  $A_q(\n)$ is an algebra with the multiplication given by
$(\psi \cdot \theta)(x)= \theta(x_{(1)})\psi(x_{(2)})$,
when $\Delta_\n(x)=x_{(1)} \tens x_{(2)}$ in  Sweedler's notation.

Let us denote by $\An_{\Z[q^{\pm 1}]}$ the $\Z[q^{\pm 1}]$-submodule of $\An$
 consisting of $ \psi \in \An$ such that 
$ \psi\bl U_q^{+}(\g)_{\Z[q^{\pm1}]}\br\subset\Z[q^{\pm1}]$. 
Then it is a $\Z[q^{\pm 1}]$-subalgebra of $\An$.
 Moreover, the {\em upper global basis}  $\B^\up$ is 
a $\Z[q^{\pm1}]$-module basis of $\An_{\Z[q^{\pm 1}]}$. 
For the detail of the upper global basis, see \cite[Section 1.3]{KKKO18}.
For $w \in W$, let $\Anw_{\Z[q^{\pm 1}]}$ be the $\Z[q^{\pm 1}]$-submodule of $\An_{\Z[q^{\pm 1}]}$
 consisting of  the elements $\psi \in \An_{\Z[q^{\pm 1}]}$ such that 
$e_{i_1}\cdots e_{i_n} \psi =0$  for any $\beta\in \bl\rtl^+\cap w \rootl^+\br
 \setminus \{0\}$ and
any sequence $(i_1,\ldots,i_n) \in I^\beta$. 
Then $\Anw_{\Z[q^{\pm 1}]}$  is a $\Z[q^{\pm 1}]$-subalgebra of $\An_{\Z[q^{\pm 1}]}$ (\cite[Theorem 2.20]{KKOP18}). 
We call this algebra the \emph{quantum unipotent coordinate ring associated with $w$.}
The set $\mathbf B^\up(w) \seteq\mathbf B^\up \cap \Anw$ forms a $\Z[q^{\pm 1}]$-basis of $\Anw_{\Z[q^{\pm 1}]}$ (\cite{Kimu12}).

For a dominant weight $\la \in \wl^+$, 
let $V(\la)$ be the irreducible highest weight $\U$-module with highest weight vector $u_\la$ of weight $\la$. 
Let $( \ , \ )_\la$ be the non-degenerate symmetric bilinear form on $V(\la)$ such that $(u_\la,u_\la)_\la=1$ and $(xu,v)_\la=(u,\varphi(x)v)_\la$ for $u,v \in V(\la)$ and $x\in\Uq$,
where $\varphi$ is  the 
algebra antiautomorphism on $\U$  defined 
by $\varphi(e_i)=f_i$, $\varphi(f_i)=e_i$  and $\varphi(q^h)=q^h$.
For each $\mu, \zeta \in W \la$,
the \emph{unipotent quantum minor} $D(\mu, \zeta)$ is an element in $A_q(\n)$ given by
 $D(\mu,\zeta)(x)=(x u_\mu,u_\zeta)_\la$ 
for $x\in U_q^+(\g)$,
where $u_\mu$ and $u_{\zeta}$ are the extremal weight vectors in $V(\la)$ of weight $\mu$ and $\zeta$, respectively.
For each reduced expression $\underline{w}=s_{i_1}s_{i_2}\cdots s_{i_l}$ of $w \in W$, define 
$w_{\le k}\seteq s_{i_1}\cdots s_{i_k}$  and $w_{<k}\seteq s_{i_1}\cdots s_{i_{k-1}}$ for $1 \le k \le l$.
Then the set $\set{D(w_{\le k}\varpi_{i_k}, w_{< k }\varpi_{i_k})}{1\le k\le l}$ of unipotent quantum minors generates $\Anw_{\Z[q^\pm 1]}$ as a $\Z[q^{\pm1}]$-algebra.

\subsection{Quantum cluster algebras}\label{subsec: cluster}


Fix a finite index set $\Kex$ and $\Kfr$ and we call the elements in $\Kex$  and $\Kfr$ \emph{exchangeable indices} and \emph{frozen indices}, respectively.
Set $\K\seteq\Kex\sqcup\Kfr$.
For a given  skew-symmetric integer-valued $\K\times \K$-matrix $L=(\lambda_{ij})_{i,j\in \K}$, 
we define $\mathscr P(L)$ as the  $\Z[q^{\pm 1/2}]$-algebra generated by
a family of elements $\{X_i\}_{i\in \K}$
with the defining relations
$X_iX_j=q^{\lambda_{ij}}X_j X_i \quad (i,j\in \K).\label{eq:Xcom}$
For ${\bf a}=(a_i)_{i\in\K}\in \Z^\K$, set
\eqn
&&X^{\bf a}\seteq q^{1/2 \sum_{i > j} a_ia_j\lambda_{ij}}
\overset{\To[{\ }]}{\prod}_{i\in\K}
 X_i^{a_i} \ \in \ \mathscr F(L),
\eneqn
where 
$\mathscr F(L)$ is the skew field of fractions of $\mathscr P(L)$, and 
$\overset{\To[{\ }]}{\prod}_{i\in\K} X_i^{a_i}=X_{i_1}^{a_{i_1}}\cdots X_{i_r}^{a_{i_r}}$
 with $i_1<\cdots <i_r$ in a fixed total order on $\K$.
Note that
$X^{\bf a}$ does not depend on the choice of a total order.
Then
we have
\eqn
&&X^{\bf a}X^{\bf b}=q^{1/2\sum_{i,j\in\K} a_ib_j\la_{ij}}X^{\bf a+\bf b}
\eneqn 
and
 $\{X^{\mathbf a}\}_{\mathbf{a}\in\Z_{\ge0}^\K}$ is a basis
of $\mathscr P(L)$ as a $\Z[q^{\pm 1/2}]$-module.

Let $\widetilde B = (b_{ij})_{(i,j)\in\K\times\Kex}$ be an  integer-valued
$\K \times \Kex$-matrix whose {\em principal part $B\seteq(b_{ij})_{i,j\in\Kex}$} is  {\em skew-symmetric}. 
The matrix $\wdt B$ is called {\em an exchange matrix.} 
A pair $(L, \widetilde B)$ is called {\em compatible} if there exists a positive integer $d$ such that
$\sum_{k\in\K} \lambda_{ik} b_{kj} =\delta_{ij} d \quad (i\in \K,\;j\in\Kex)$.
In this paper, we treat only the case $d=2$.
Let $(L,  \widetilde B)$ be a compatible pair and $A$ a $\Z[q^{\pm1/2}]$-algebra.

We say that $ \qseed = (\{x_i\}_{i\in\K},L, \widetilde B)$ is
a {\em quantum seed} in $A$ if $\{x_i\}_{i\in\K}$ satisfies that 
$x_ix_j=q^{\la_{ij}}x_jx_i$ for any $i,j\in\K$ and  the the algebra map
$\mathscr P(L)\to A$ given by $X_i\mapsto x_i$ is injective.

In that case, we call the set $\{x_i\}_{i\in\K}$  the {\em cluster} of $\qseed$ and
its elements  the {\em cluster variables}.
The elements $x^{\bf a}$ (${\bf a}\in \Z_{\ge0}^\K$) are called  the {\em quantum cluster monomials}.

Let $\qseed=(\{x_i\}_{i\in\K},L, \widetilde B)$ be a quantum seed in a $\Z[q^{\pm1/2}]$-algebra $A$ which is contained in a skew field $K$ .
 For each $k\in\Kex$, we define
\eqn \label{eq:mutation B}
\mu_k(\widetilde B)_{ij} &:=&
\begin{cases}
  -b_{ij} & \text{if}  \ i=k \ \text{or} \ j=k, \\
  b_{ij} + (-1)^{\delta(b_{ik} < 0)} \max(b_{ik} b_{kj}, 0) & \text{otherwise,}
\end{cases}\\
\mu_k(L)_{ij} &:=&
\begin{cases}
  0 & \text{if}  \ i=j \\
  -\la_{kj}+\displaystyle\sum _{t\in\K} \max(0, -b_{tk}) \la_{tj} & \text{if} \ i=k, \ j\neq k, \\
  -\la_{ik}+\displaystyle\sum _{t\in\K} \max(0, -b_{tk}) \la_{it} & \text{if} \ i \neq k, \ j= k, \\
  \la_{ij} & \text{otherwise,}
\end{cases}
\eneqn
and 
\eqn \label{eq:quantum mutation}
\mu_k(x)_i\seteq
\begin{cases}
  x^{{\bf a}'}  +   x^{{\bf a}''}, & \text{if} \ i=k, \\
 x_i & \text{if} \ i\neq k,
\end{cases}
\eneqn
where  ${\bf a}'=(a_i')_{i\in\K}$ and ${\bf a}''=(a_i'')_{i\in\K}$ are
given by
\eqn
&&a_i'=
\begin{cases}
  -1 & \text{if} \ i=k, \\
 \max(0,b_{ik}) & \text{if} \ i\neq k,
\end{cases} \qquad
a_i''=
\begin{cases}
  -1 & \text{if} \ i=k, \\
 \max(0,-b_{ik}) & \text{if} \ i\neq k.
\end{cases}
\label{eq:aa}
\eneqn
Then the triple 
\eqn
\mu_k(\qseed)\seteq\bl\{\mu_k(x)_i\}_{i\in\K},\mu_k(L),\mu_k(\widetilde B)\br
\eneqn
becomes a new quantum seed in $K$ and we call it
the {\em mutation
of $\qseed$ at $k$}.

   The {\em quantum cluster algebra $\mathscr A_{q^{1/2}}(\qseed)$ associated to the quantum seed $\qseed$} is  the $\Z[q^{\pm 1/2}]$-subalgebra of the skew field $K$ generated by all the  cluster variables in the quantum seeds obtained from $\qseed$ by any sequence of mutations.

A {\em quantum cluster algebra structure} on a $\Z[q^{\pm1/2}]$-algebra $A$ 
is a family $\shf$ of quantum seeds in $A$ such that
\bna
\item for any quantum seed $\qseed$ in $\shf$, $\mathscr A_{q^{1/2}}(\qseed)$
is isomorphic to $A$,
\item for any pair $\qseed,\qseed'$ of quantum seeds in $\shf$,
$\qseed'$ is obtained from $\qseed$ by a  sequence of  mutations,
\item any mutation of a quantum seed in $\shf$ is in $\shf$.
\ee

We call a quantum seed in $\shf$ a quantum seed of the quantum cluster algebra $A$.

\bigskip

The \emph{quantum Laurent phenomenon}  (\cite[Corollary 5.2]{BZ05}) asserts that for any quantum seed $\qseed$, 
every element $x \in \mathscr A_{q^{1/2}}(\qseed)$ can be expressed as
\eqn \label{eq:qLP}
x=\sum_{\bfa\, \in\, \Z^{\Kex}  \, \times \, \Z_{\ge 0}^{\Kfr} }c_\bfa x^\bfa 
\eneqn
in $K$,
where
$X=(x_i)_{i\in \K}$ is the  cluster in $\qseed$, 
and $c_\bfa\in \Z[q^{\pm1/2}]$.

Remark that in \cite{BZ05}, the quantum cluster algebra associated with $\qseed$ is defined as 
the  subalgebra of $K$ generated by all the  cluster variables 
together with the inverses of the frozen variables.
Then it  is the (right) ring of quotient of $\mathscr A_{q^{1/2}}(\qseed)$ with respect to the multiplicative subset consisting of monomials in the frozen variables.


\smallskip

Gei\ss,  Leclerc and Schr\"oer showed that the algebra $\Z[q^{\pm1/2}] \otimes_{\Z[q^{\pm1}]} \Anw_{\Z[q^{\pm1}]}$ has a quantum cluster algebra structure (\cite{GLS}).
 In particular, for each choice of reduced expression $\underline{w}=s_{i_1}s_{i_2}\cdots s_{i_l}$, they gave a distinguished quantum seed whose cluster is consisting of 
$D(w_{\le k} \varpi_{i_k}, \varpi_{i_k} )$ for $1\le k\le l$ up to a power of $q^{1/2}$.
For a detailed description, see  subsection \ref{subsec:initial seeds} below.

\subsection{Quiver Hecke algebras}
Let $\cor$ be a base field.
Take  a family of polynomials $(Q_{ij})_{i,j\in I}$ in $\cor[u,v]$
which are of the form 
\eq
Q_{ij}(u,v) =&& \begin{cases}\hs{5ex} 0 \ \ & \text{if $i=j$,} \\[1.5ex]
\pm(u-v)^{-(\al_i,\al_j)}
& \text{if $i \neq j$}
\end{cases}
\label{eq:Q}
\eneq
such that $Q_{i,j}(u,v)=Q_{j,i}(v,u)$.

The (symmetric) {\em  quiver Hecke algebra} (also called,  \emph{Khovanov-Lauda-Rouquier algebra})  $R(\beta)$  at $\beta\in\rtl^+$ associated
with  $(A,\wl, \Pi,\wl^{\vee},\Pi^{\vee})$ and 
$(Q_{ij})_{i,j \in I}$, is the  algebra over $\cor$
generated by the elements $\{ e(\nu) \}_{\nu \in  I^{\beta}}$, $
\{x_k \}_{1 \le k \le n}$, $\{ \tau_m \}_{1 \le m \le n-1}$
satisfying the following defining relations:
{\allowdisplaybreaks
\begin{align*}
& e(\nu) e(\nu') = \delta_{\nu, \nu'} e(\nu), \ \
\sum_{\nu \in  I^{\beta} } e(\nu) = 1, \\
& x_{k} x_{m} = x_{m} x_{k}, \ \ x_{k} e(\nu) = e(\nu) x_{k}, \\
& \tau_{m} e(\nu) = e(s_{m}(\nu)) \tau_{m}, \ \ \tau_{k} \tau_{m} =
\tau_{m} \tau_{k} \ \ \text{if} \ |k-m|>1, \\
& \tau_{k}^2 e(\nu) = Q_{\nu_{k}, \nu_{k+1}} (x_{k}, x_{k+1})
e(\nu), \\
& (\tau_{k} x_{m} - x_{s_k(m)} \tau_{k}) e(\nu) = \begin{cases}
-e(\nu) \ \ & \text{if} \ m=k, \nu_{k} = \nu_{k+1}, \\
e(\nu) \ \ & \text{if} \ m=k+1, \nu_{k}=\nu_{k+1}, \\
0 \ \ & \text{otherwise},
\end{cases} \\
& (\tau_{k+1} \tau_{k} \tau_{k+1}-\tau_{k} \tau_{k+1} \tau_{k}) e(\nu)\\*
& =\begin{cases} \dfrac{Q_{\nu_{k}, \nu_{k+1}}(x_{k},
x_{k+1}) - Q_{\nu_{k}, \nu_{k+1}}(x_{k+2}, x_{k+1})} {x_{k} -
x_{k+2}}e(\nu) \ \ & \text{if} \
\nu_{k} = \nu_{k+2}, \\
0 \ \ & \text{otherwise}.
\end{cases}
\end{align*}
}
The algebra $R(\beta)$ is $\Z$-graded with $\deg e(\nu)=0$, $\deg x_ke(\nu)=2$, and $\deg \tau_m e(\nu)=-(\alpha_{\nu_m},\alpha_{\nu_{m+1}})$.

 Note that the choice of $\pm$ in \eqref{eq:Q}
is irrelevant.
Namely, the isomorphism class of the quiver Hecke algebra $R(\beta)$ 
does not depend on the choice of $\pm$.

Let us denote by $R(\beta)\gmod$ the category of finite-dimensional graded $R(\beta)$-modules with homomorphisms of degree $0$. 
For an  $R(\beta)$-module $M$, we set $\wt(M)\seteq -\beta \in \rootl^-$.
For the sake of simplicity,
we  say that $M$ is an $R$-module instead of saying
that $M$ is a graded $R(\beta)$-module for $\beta\in\rtl^+$.
Let us denote by $q$ the grading shift functor: for $M\in R\gmod$, we have
$(qM)_n=M_{n-1}$ by definition.

We also sometimes ignore grading shifts if there is no  danger of confusion.
Hence, for $R$-modules $M$ and $N$,  we sometimes say that
$f\col M\to N$ is a homomorphism if
$f\col q^aM\to N$ is a morphism in $R\gmod$ for some $a\in\Z$.
If we want to emphasize that $f\col q^aM\to N$ is a morphism in $R\gmod$,
we say so.
 We set  
$$\HOM_{R(\beta)}(M,N)\seteq \soplus_{a\in \Z} \Hom_{R(\beta)\gmod}(q^aM,N).$$

For $M \in R(\beta)\gmod$, the space $M^*\seteq \Hom_{\cor}(M,\cor)$ admits an $R(\beta)$-module structure 
 induced  by the $\cor$-algebra antiautomorphism $\psi$ on $R(\beta)$ 
which fixes the generators $e(\nu)$, $x_k$, and $\tau_m$. 
Hence we have a contravariant auto-equivalence $^*$ on $R\gmod$.
We call a simple $R(\beta)$-module  $M$   is \emph{self-dual} if $M\simeq M^*$ in $R\gmod$.

For $\beta, \gamma \in \rootl^+$, 
 set
$e(\beta,\gamma)\seteq \displaystyle\sum _{\nu \in I^{\beta}, \nu' \in I^{\gamma}}
 e(\nu * \nu')  \in  R(\beta+\gamma)$, where $\nu*\nu'$ denotes the concatenation of $\nu$ and $\nu'$.
Then there is a $\cor$-algebra homomorphism 
$R( \beta)\tens R( \gamma  )\to e(\beta,\gamma)R( \beta+\gamma)e(\beta,\gamma)$. 
The \emph{convolution product}
$- \conv - : R(\beta)\gmod \times R(\gamma)\gmod \to R(\beta+\gamma)\gmod$
 is a bifunctor given by  
$$M\conv N\seteq R(\beta + \gamma) e(\beta,\gamma)
\tens_{R(\beta )\otimes R( \gamma)}(M\otimes N).$$
Set $R\gmod \seteq \soplus_{\beta \in \rootl_+} R(\beta)\gmod$.
 Then the category $R\gmod$ is a monoidal category with the tensor product $\conv$ and the unit object $\one\seteq \cor \in R(0) \gmod$.

For $M\in R(\beta)\gmod$ and $N \in R(\gamma)\gmod$, we have 
$(M\conv N)^*\simeq q^{(\beta,\gamma)}(N^*\conv M^*)$.

\subsection{R-matrices and real simple modules}

  For $1\le a<|\beta|$,  we define the {\em intertwiner} $\vphi_a\in R( \beta)$
by
\eqn&&\ba{l}
  \vphi_a e(\nu)=
\begin{cases}
  \bl\tau_ax_a-x_{a}\tau_a\br e(\nu)
  & \text{if $\nu_a=\nu_{a+1}$,} \\[2ex]
\tau_ae(\nu)& \text{otherwise.}
\end{cases}
 \ea \eneqn
For $m,n\in\Z_{\ge0}$,
let us denote by $w[{m,n}]$ the element of $\sym_{m+n}$  defined by
\eqn
&&w[{m,n}](k)=\begin{cases}k+n&\text{if $1\le k\le m$,}\\
k-m&\text{if $m<k\le m+n$,}\end{cases}
\eneqn
and set $\vphi_{[m,n]}\seteq\vphi_{i_1}\cdots\vphi_{i_\ell}$, where $s_{i_1}\cdots s_{i_t}$ is a reduced expression of $w[m,n]$.
 It does not depend on the choice of a reduced expression.

Let
$M$ be an $R(\beta)$-module and $N$ an $R(\gamma)$-module. Then the
map $M\tens N\to N\conv M$ given by
$u\tens v\longmapsto \vphi_{[|\gamma|,|\beta|]}(v\tens u)$
induces 
 an $R( \beta +\gamma)$-module  homomorphism
\eqn &&R_{M,N}\col
M\conv N\To N\conv M. \eneqn

Let $z$ be an indeterminate  which is homogeneous of degree $2$, and
let $\psi_z$ be the graded algebra homomorphism
\eqn
&&\psi_z\col R( \beta )\to \cor[z]\tens R( \beta )
\eneqn
given by
$$\psi_z(x_k)=x_k+z,\quad\psi_z(\tau_k)=\tau_k, \quad\psi_z(e(\nu))=e(\nu).$$

For an $R( \beta )$-module $M$, we denote by $M_z$
the $\bl\cor[z]\tens R( \beta )\br$-module
$\cor[z]\tens M$ with the action of $R( \beta )$ twisted by $\psi_z$.

For a non-zero $R(\beta)$-module $M$ 
and a non-zero $R(\gamma)$-module $N$,
let $s$ be the largest non-negative integer such that the image of
$R_{M_z,N}$ is contained in $z^s(N\conv M_z)$.
We denote by $\Rm_{M_z,N}$
the $R(\beta)$-module homomorphism $z^{-s}R_{M_z,N}$ and call it the \emph{renormalized R-matrix}. 
We denote  by $\rmat{M,N}:= \Rm_{M_z,N}\vert_{z=0} : M\conv N \to N \conv M$ 
and call $\rmat{M,N}$ the \emph{R-matrix}. 
 By the definition, $\rmat{M,N}$ never vanishes. 
We denote by $\Lambda(M,N)$ the homogeneous degree of $\rmat{M,N}$ and set
\eqn
&&\tL(M,N) \seteq \frac{1}{2}\left(\Lambda(M,N) +(\wt(M), \wt(N)) \right) ,\quad 
\de(M,N) \seteq \frac{1}{2}\bl\Lambda(M,N)+\Lambda(N,M) \br.
\eneqn
 Then we have 
\eq
&&\tL(M,N), \quad\de(M,N)\in \Z_{\ge 0}\label{eq:Lapos}
\eneq
  by \cite[Proposition\;1.10\;(iii)]{KKK18} and \cite[Lemma\;3.2.1]{KKKO18} for any simple modules
 $M$ and $N$. 
 For more properties on $\tL(M,N)$ and $\de(M,N)$, see  \cite[Section\;3.2]{KKKO18}.

We say that two simple $R$-modules $M$ and $N$ {\em strongly commute} if $M\conv N$ is simple. 
In such a case, $\rmat{M,N}\cl q^{\La(M,N)}M\conv N\to N\conv M$ is an isomorphism. 
If a simple module $M$  strongly commutes with itself, then  $M$ is called {\em real}.
Assume that $M$ and $N$ are simple modules and  one of them is real. 
Then $M$ and $N$ strongly commute if and only if $M \conv N$ is isomorphic to $N \conv M$ up to a grading shift, which is equivalent to the condition  $\de(M,N)=0$ (see, \cite[Lemma 3.2.3]{KKKO18}).

For simple $R$-modules $M$ and $N$, we denote by  $M \hconv N$   the head of
$M \conv N$ and  by $M \sconv N$ the socle of $M \conv N$ .

Assume that  $M$ and $N$ are self-dual simple modules and one of them is real.
Then
$q^{\tL(M,N)} M \hconv N$
is a self-dual simple module.

\smallskip
Let $\{M_i\}_{i\in \K}$ be a  mutually strongly commuting family of 
 self-dual  real simple modules.
For an element $\bolda=(\bolda_i)_{i\in J} \in\Z^{J}_{\ge 0}$,
there exists a unique self-dual simple module which is a  convolution   product of modules $M_i$ with multiplicities $\bolda_i$. We will denote it by $M(\bfa)$ in short. 
For any  sequence $(k_1,\ldots, k_m)$ 
in which each $i \in \K$ appears $\bolda_i$ times, 
we have 
\eq \label{eq:selfdualconvs} 
M(\bfa) \simeq q^{\sum_{1 \le p <q\le m} \tLa(M_{k_p},M_{k_q})}M_{k_1} \conv \cdots \conv M_{k_m}\eneq 
in $R\gmod$, 
since the latter is a self-dual simple module.

Note that we have
$$M(\bolda)\conv M(\boldb)\simeq 
q^{\;-\sum\limits_{i,j\in\K}\bolda_i\boldb_j\tLa(M_i,M_j)}\;M(\bolda+\boldb)
\qt{for any $\bolda$, $\boldb\in \Z^{\K}_{\ge 0}$.}$$

\section{Monoidal categorification and Laurent phenomenon} 
\label{sec:moncatLaurent}
\subsection{Monoidal categorification of $\Anw$}
 The Grothendieck  group 
$K(R\gmod)$ has a $\Z[q^{\pm1}]$-algebra structure.
The multiplication structure on $K(R\gmod)$ is induced by the convolution product and the action of $q$ is induced by the grading shift functor $q$.
Moreover the isomorphism classes of self-dual simple modules form
a  $\Z[q^{\pm1}]$-module basis of $K(R\gmod)$.

There is a $\Z[q^{\pm1}]$-algebra isomorphism between the Grothendieck ring $K(R\gmod)$ and $\An_{\Z[q^{\pm1}]}$ (\cite{KL09}). 

\medskip

If we assume further that $\cor$ is of characteristic $0$, then  the  basis consisting of the classes of self-dual simple modules in $R\gmod$ corresponds to the upper global basis $\mathbf B^\up$ under the isomorphism (\cite{VV09,R11}).  
We have $[M^*]=\overline{[M]}$ for $M \in R\gmod$, where $-$ denotes the \emph{bar involution} which is the $\Z$-linear involution on $K(R\gmod)$ sending $q\mapsto q^{-1}$ and fixing the  
classes of  self-dual simple modules.

In particular,  the quantum unipotent minors $D(\la, \mu)$  belong to $\mathbf B^\up$ and we denote the corresponding self-dual simple modules $M(\la,\mu)$ and call it a 
\emph{determinantial module}.
 Even if ${\rm char}(\cor) \neq 0$, 
there exists a unique simple module $M(\la,\mu)$
  such that $[M(\la,\mu)]=D(\la,\mu)$ (\cite[Proposition 4.1]{KKOP18}).

For  an $R(\beta)$-module $M$,  define
\eqn
\mathrm W (M)\seteq \set{\gamma \in  \rootl^+\cap (\beta-\rootl^+)}{e(\gamma, \beta-\gamma) M \neq 0},&&\\
\mathrm W^* (M)\seteq \set{\gamma \in  \rootl^+\cap (\beta-\rootl^+)}{e(\beta-\gamma,\gamma) M \neq 0}.&&
\eneqn
For $w \in W$, we denote by $\mathcal C_w$ the full subcategory of $R\gmod$ whose objects $M$ satisfy $\mathrm W(M) \subset \rootl^+\cap w \rootl^-$. 
Then the category $\mathcal C_w$ is the  smallest monoidal abelian full subcategory of $R\gmod$ which is stable under taking subquotients, extensions, grading shifts, and  contains  the self-dual simple modules  $\set{M(w_{\le k}\varpi_{i_k},w_{<k}\varpi_{i_k})}{1\le k\le l}$, where $\underline{w}=s_{i_1}s_{i_2}\cdots s_{i_l}$ is a reduced expression  of $w$ (\cite[Theorem 2.20]{KKOP18}).  
The Grothendieck ring $K(\mathcal C_w)$ is isomorphic to $\Anw_{\Z[q^{\pm1}]}$.
Moreover, the set of
  the classes of self-dual simple modules coincides with
the upper global basis,
whenever $\cor$ is of characteristic $0$. 

We set
\eqn
K(R\gmod)\vert_{q=1}&&\seteq
K(R\gmod)/(q-1)K(R\gmod),\\
K(\Cw)\vert_{q=1}&&\seteq
K(\Cw)/(q-1)K(\Cw).
\eneqn
They are commutative rings.

In \cite{KKKO18}, it is shown that the category $\mathcal C_w$ gives a \emph{monoidal categorification of the cluster algebra $\Anw$}.
The following is one of  the main results
in \cite{KKKO18}.
\begin{theorem} \label{thm:moncat}
 Let $\qseed=(\{X_i\}_{i\in J}, L,\widetilde B)$ be a quantum seed of the quantum cluster algebra  
$\Z[q^{\pm1/2}] \tens_{\Z[q^{\pm1}]}\Anw$. 
 Here
$\K=\Kex \sqcup \Kfr$ is the index set of cluster variables, 
$\Kex$ is the index set of
exchangeable cluster variables, and $\Kfr$ is the index set of frozen cluster
variables, 
 and $\widetilde B = (b_{i,j})_{i\in \K, j \in \Kex }$ is 
an exchange matrix \ro see subsection~{\rm\ref{subsec: cluster}}\rf.

\bni
\item 
 The quantum seed $\qseed$  has the form 
\eq
[\seed]\seteq(\{q^{-\frac{1}{4}(\wt(M_i), \wt(M_i))}[M_i]\}_{i\in\K}, -\La,\widetilde B),\label{def:associated_seed}
\eneq
where 
 $ \mcluster  \seteq\{M_i\}_{i\in \K}$  is a 
 mutually strongly commuting family of real simple modules 
in $\mathcal C_w$  and $\La=\bl\La(M_i,M_j)\br_{i,j\in \K}$.

\item For each $k \in \Kex$, there exists a self-dual simple module $M_k'$ in $\mathcal C_w$ such that there exists an exact sequence in $\mathcal C_w$ 
\eqn 
&&\hs{5ex}0 \to \mathop{\conv}\limits_{b_{i,k}>0} M_i^{\circ b_{i,k}} \to 
M_k \conv M_k' \to \mathop{ \conv}\limits_{b_{i,k}<0} M_i^{\circ( -b_{i,k})} \to 0\qt{up to grading shifts.}
\eneqn
We call the module $M_k'$  the \emph{mutation of $M_k$ at $k$}.
\end{enumerate}
\end{theorem}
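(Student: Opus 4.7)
The plan is to proceed by induction on the mutation distance from a distinguished initial seed to $\seed$. For the base case, I would use the \emph{standard seed} associated to a reduced expression $\underline{w}=s_{i_1}\cdots s_{i_l}$ of $w$, whose cluster consists of the determinantial modules $M_k=M(w_{\le k}\varpi_{i_k},w_{<k}\varpi_{i_k})$ for $1\le k\le l$. These are self-dual simple objects of $\Cw$ by the construction recalled in Section~\ref{sec:pre}; one verifies that they are real and pairwise strongly commute using the explicit R-matrix computations between determinantial modules (equivalently, the vanishing $\de(M_i,M_j)=0$ read off from the $T$-system relations), and their classes are precisely the unipotent quantum minors forming the standard cluster of \cite{GLS}. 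The normalization $q^{-\frac14(\wt(M_i),\wt(M_i))}$ in \eqref{def:associated_seed} is exactly the square-root correction that turns the self-dual generators $[M_i]$ into elements satisfying the quantum torus relation with matrix $-\La=-\bigl(\La(M_i,M_j)\bigr)$; this uses the identity $[M_i]\cdot[M_j]=q^{-\La(M_j,M_i)+(\wt(M_i),\wt(M_j))}[M_j]\cdot[M_i]$ valid for strongly commuting self-dual simples. Compatibility with $\widetilde B$ is transported from \cite{GLS}.

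For the inductive step, assume $\seed$ has the asserted form and fix $k\in\Kex$. The mutated cluster variable $[\mu_k(x)_k]$ is determined in the quantum cluster algebra by the exchange relation, and the task is to upgrade this Grothendieck-group identity to an honest short exact sequence in $\Cw$. Set $A\seteq\mathop{\conv}\limits_{b_{ik}>0}M_i^{\circ b_{ik}}$ and $B\seteq\mathop{\conv}\limits_{b_{ik}<0}M_i^{\circ(-b_{ik})}$, which are themselves self-dual real simple since the family $\{M_i\}_{i\ne k}$ pairwise strongly commutes (so \eqref{eq:selfdualconvs} applies). Using the renormalized R-matrices $\Rm_{M_{k,z},A}$ and $\Rm_{B,M_{k,z}}$ together with the exchange-relation equality in the Grothendieck ring, one constructs $M_k'$ as the unique self-dual simple module that fits into
\[
0 \to A \to M_k\conv M_k' \to B \to 0
\]
up to grading shifts; simplicity of the outer terms together with the class identity in $K(\Cw)$ forces this sequence to be short exact, and the uniqueness of $M_k'$ forces $[M_k']$ to be the mutated cluster variable up to the prescribed power of $q^{1/2}$.

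The main obstacle is closing the induction: one must show that the mutated family $\{M_i\}_{i\ne k}\cup\{M_k'\}$ still consists of pairwise strongly commuting real simple modules, and that the invariants $\La(M_i,M_k')$ together with the compatibility with $\mu_k(\widetilde B)$ match the mutated datum $\mu_k(-\La)$ in the formulas of Section~\ref{subsec: cluster}. The technical heart is the criterion, drawn from \cite[Section 3.2]{KKKO18}, that if $0\to X\to Y\to Z\to 0$ is an exact sequence of simples in which $X$ and $Z$ strongly commute with a fixed real simple $S$ in a degree-balanced way, then so does the middle term $Y$. Applying this to the exchange sequence with $S=M_j$ for each $j\ne k$ yields pairwise strong commutation of the new family; applying it to the analogous sequence obtained after convolution with $M_k'$ on the right, combined with a direct $\de$-computation on the outer terms, yields reality $\de(M_k',M_k')=0$. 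Finally, the mutation formulas \eqref{eq:mutation B} for $\La$ and $\widetilde B$ are recovered by computing $\La(M_j,M_k')$ and $\La(M_k',M_j)$ from the exact sequence using the additivity of $\La$ under convolution and the known values on the outer terms, completing the induction.
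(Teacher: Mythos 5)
The paper does not prove Theorem~\ref{thm:moncat}: it explicitly cites it as one of the main results of \cite{KKKO18}, and the only thing the paper adds is the observation in the Remark that the characteristic-zero hypothesis can be dropped by replacing \cite[Theorem 4.2.1]{KKKO18} with \cite[Proposition 4.6]{KKOP18}. Your proposal is therefore an attempt to reconstruct the argument of \cite{KKKO18} rather than something to compare against a proof in this paper. The general shape is right --- anchor at the GLS initial seed, then propagate by induction on mutation distance --- but the inductive step as you have written it contains a circularity and a false auxiliary claim.

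First, the base-case identification is wrong: the initial cluster variables are $M_k = M(w_{\le k}\varpi_{i_k}, \varpi_{i_k})$, not $M(w_{\le k}\varpi_{i_k}, w_{<k}\varpi_{i_k})$. The modules $S_k = M(w_{\le k}\varpi_{i_k}, w_{<k}\varpi_{i_k})$ are the cuspidal generators of the dual PBW basis, not the cluster variables (see subsection~\ref{subsec:initial seeds}).

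Second, and more seriously, the inductive step is circular. You write that the exchange relation gives a ``Grothendieck-group identity'' which one then upgrades to a short exact sequence. But a priori the mutated element $\mu_k(x)_k$ is only an element of the skew field of fractions; the content of the theorem is precisely that it equals the class of a self-dual simple module in $\Cw$, and this cannot be assumed. The phrase ``simplicity of the outer terms together with the class identity in $K(\Cw)$ forces this sequence to be short exact'' presupposes the conclusion: there is no class identity \emph{in} $K(\Cw)$ until you have produced $M_k'$ and the sequence. In \cite{KKKO18} the module $M_k'$ is constructed directly (as a head or socle of a specific convolution), and only afterwards is the exchange relation $[M_k][M_k']=[A]+[B]$ verified by an R-matrix computation; ``uniqueness of $M_k'$'' is not an input.

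Third, the auxiliary criterion you invoke --- ``if $0\to X\to Y\to Z\to 0$ is an exact sequence of simples in which $X$ and $Z$ strongly commute with a real simple $S$ then so does $Y$'' --- is vacuous as stated (a non-split short exact sequence cannot have all three terms simple), and in any case would not apply here since $M_k\conv M_k'$ is not simple. The actual tool from \cite[Section 3.2]{KKKO18} is that strong commutation with a fixed real simple passes to subquotients of convolution products, which is a different statement, and applying it to establish that $M_k'$ strongly commutes with the $M_j$ ($j\ne k$) and that $M_k'$ is real requires the more delicate $\La$- and $\de$-bookkeeping carried out in \cite[Section 7]{KKKO18}. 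As written, your closing of the induction does not go through.
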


\smallskip
\begin{remark}
Recall that the above theorem is proved in \cite{KKKO18} under the assumption  ${\rm char} ( \cor)=0$. 
But this  assumption can be dropped in  the above theorem, 
since \cite[Proposition 4.6]{KKOP18} replaces \cite[Theorem 4.2.1]{KKKO18}, 
which is the only place using the assumption in the course of the proof. 
\end{remark}

If $[\seed]$ in \eqref{def:associated_seed} is a quantum seed
and satisfies (ii) in the theorem above,  then we call the triple
$\seed =(\{M_i\}_{i\in\K}, -\La,\widetilde B)$ a \emph{monoidal seed in $\Cw$},
and $\mcluster=\{M_i\}_{i\in \K}$  
a \emph{ monoidal cluster   in  $\mathcal C_w$}.
By Lemma \ref{lem:invertible} below, the  monoidal seed $\seed$
is completely  determined by the monoidal cluster $\mcluster$.

 We call $\mu_k( \mcluster)\seteq\{M_i'\}_{i\in \K}$ 
the  \emph{mutation}  of $ \mcluster $ at $k$,
where $M_i'=M_i$ $(i\neq k)$ and $M_k'$ is as in  Theorem~\ref{thm:moncat}~(ii) above.

\subsection{Laurent phenomenon}

Let $ \mcluster  = \{M_i\}_{i\in \K}$ be a   monoidal cluster   in $\mathcal C_w$. 
For $\bolda\in\Z_{\ge0}^\K$, we denote by
$M_{ \mcluster }(\bolda)$ the module 
$M(\bolda)$ defined in \eqref{eq:selfdualconvs} starting from $ \mcluster =\{M_i\}_{i\in \K}$.
If there is no afraid of confusion, we  simply write 
 $M(\bolda)$ instead of $M_{ \mcluster }(\bolda)$. 

The subalgebra  $\Z[q^{\pm1}][[M_i] ; i\in \K]$ of $\Anw$
is  isomorphic to the polynomial ring with the $[M_i]$'s as variables. 
The  quantum Laurent phenomenon  claims that
$K(\mathcal C_w)$ is contained in the quantum Laurent polynomial ring
$\Z[q^{\pm1}][[M_i] ; i\in \Kfr][[M_i]^{\pm1}  ; i\in \Kex]$ in the skew field of fractions of $\Anw$. 

 Hence, for a module $X$ in $\mathcal C_w$, 
there exists  
$\bolda \in \Z_{\ge 0}^\K$ with $a_j =0 $ for $j \in \Kfr$ such that 
\eq \label{eq: expansion}
[X \conv M(\bolda)] =[X]\cdot [M(\bolda)] = \sum_{1 \le s \le r} c(\boldb(s)) [M(\boldb(s))]
\eneq
for some $r \ge 0$, $\boldb(s) \in \Z_{\ge 0}^\K$ and $c(\boldb(s)) \in \Z[q^{\pm 1/2}]$. 
Since  $[X \conv M(\bolda)]$ is the class of a module in $\mathcal C_w$ and   each $[M(\boldb(s))]$ is the  class of  a real
simple module, the coefficient $c(\boldb(s))$ lies in  $\Z_{\ge 0}[q^{\pm 1}]$ for all $1\le s \le r$.

Summing up, the following lemma is an immediate consequence of the quantum Laurent phenomenon.

\Lemma\label{lem:LP}
For a module $X$ in $\mathcal C_w$, 
there exist  
$\bolda \in \Z_{\ge 0}^\K$ with $\bolda_j =0 $ for $j \in \Kfr$,
$r\in\Z_{\ge0}$, $\boldb(s) \in \Z_{\ge 0}^\K$ and $c_s\in\Z$ $(1\le s\le r)$
such that
\eq \label{eq: expansion1}
[X \conv M(\bolda)] =[X]\cdot [M(\bolda)] =
\sum_{1 \le s \le r} q^{c_s}[M(\boldb(s))].
\eneq
\enlemma

In the sequel, we write
\eq
&&K(\Cw)[ \mcluster ^{-1}]\seteq\Z[q^{\pm1}][[M_i]^{\pm1} ; i\in \K],
\label{eq:Lrpr}
\eneq
 for a monoidal cluster $\mcluster = \{M_i\}_{i\in \K}$. 

The following theorem is an immediate  consequence of the lemma above.

\begin{theorem}[cf.\;Theorem~\ref{th:clusterch1}] \label{th:clusterch}
Let $X$ be a simple module in $\mathcal C_w$ and let $ \mcluster  =\set{M_i}{i \in \K}$ be a  monoidal cluster   in $\mathcal C_w$. If $X$ strongly commutes with $M_i$ for all $i \in \K$, then $[X]$ is a cluster monomial in the  cluster 
$[ \mcluster ]$  up to a power of $q$. In particular, $X$ is a real simple module.
\end{theorem}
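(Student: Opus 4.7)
Our plan is to combine Lemma~\ref{lem:LP} with the strong commutation hypothesis so as to collapse the expansion of $[X]$ down to a single term, and then to invoke the Laurent phenomenon in an adjacent mutated seed in order to rule out negative exponents.

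First, by Lemma~\ref{lem:LP} we choose $\bolda\in\Z_{\ge0}^{\K}$ with $a_j=0$ for $j\in\Kfr$, together with $\boldb(s)\in\Z_{\ge0}^{\K}$ and $c_s\in\Z$ such that $[X]\cdot[M(\bolda)]=\sum_{s=1}^{r}q^{c_s}[M(\boldb(s))]$. The strong commutation of $X$ with each $M_i$, combined with the additivity of $\de(X,-)$ on convolutions of mutually strongly commuting real simples, gives $\de(X,M(\bolda))=\sum_{i} a_i\,\de(X,M_i)=0$. Since $M(\bolda)$ is a real simple module, this forces $X\conv M(\bolda)$ to be simple, so $[X]\cdot[M(\bolda)]$ equals a $q$-power times a single self-dual simple class. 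By the $\Z[q^{\pm1}]$-linear independence of self-dual simple classes in $K(\Cw)$, the sum must collapse to one term: one obtains $[X]\cdot[M(\bolda)]=q^{c}[M(\boldb)]$ with $\boldb=\boldb(1)$, equivalently $[X]=q^{c'}\prod_{i}[M_i]^{b_i-a_i}$ in the skew field of fractions.

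It remains to prove the positivity $b_i-a_i\ge0$ for every $i\in\K$, which is automatic for $i\in\Kfr$. Suppose for contradiction that $b_k<a_k$ for some $k\in\Kex$, and mutate $[\shs]$ at $k$ using Theorem~\ref{thm:moncat} together with the GLS quantum cluster algebra structure on $\tAnw$. Substituting the quantum exchange relation $[M_k]=([A']+[B'])[M_k']^{-1}$ up to a $q$-power --- where $[A']$ and $[B']$ are distinct $q$-monomials in $\{[M_i']\}_{i\ne k}$ arising from the exponents $(b_{ik})_+$ and $(-b_{ik})_+$ and having disjoint supports --- rewrites $[X]$ in the mutated variables as a rational expression whose denominator acquires the factor $([A']+[B'])^{a_k-b_k}$. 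But Laurent phenomenon applied to $[X]$ in the seed $\mu_k([\shs])$ says that $[X]$ lies in the quantum Laurent polynomial ring in $\{[M_i']\}_{i\in\K}$, and a binomial with two distinct monomial summands cannot divide a monomial in this (quantum) Laurent polynomial ring --- a contradiction. Hence $b_i\ge a_i$ for all $i$.

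Therefore $[X]=q^{c'}[M(\boldb-\bolda)]$ is a cluster monomial in $[\mcluster]$ up to a power of $q$, and the uniqueness (up to grading shift) of the self-dual simple module with a given class in $K(\Cw)$ gives $X\simeq q^{?}M(\boldb-\bolda)$ as $R$-modules. Since $M(\boldb-\bolda)$ is a convolution of mutually strongly commuting real simples, it is itself real, and therefore so is $X$. The main obstacle is the positivity $b_k\ge a_k$: the decisive input is that the two-term exchange binomial cannot divide a monomial in a (quantum) Laurent polynomial ring, so that the Laurent phenomenon in the adjacent seed rules out negative exponents; the remaining steps follow directly from Lemma~\ref{lem:LP}, the additivity of $\de$ on real commuting factors, and the linear independence of self-dual simples.
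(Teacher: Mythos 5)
Your proof is correct, and the overall architecture matches the paper's: obtain the expansion from Lemma~\ref{lem:LP}, observe that $X\conv M(\bolda)$ is simple because $X$ strongly commutes with the real simple $M(\bolda)$, collapse the expansion to a single term $q^c[M(\boldb)]$ by linear independence of self-dual simples, and finally conclude $X\simeq M(\boldb-\bolda)$ once the nonnegativity $b_i-a_i\ge 0$ is established. The one step where you take a genuinely different route is the nonnegativity itself. The paper's proof uses the formula $\de(M_k',M_i)=\delta_{ki}$ from \cite[Proposition 7.1.2~(d)]{KKKO18}, so that $b_k=\de(M_k',M(\boldb))=\de(M_k',X)+a_k$, and then the general inequality $\de\ge 0$ from \eqref{eq:Lapos} gives $b_k\ge a_k$ immediately. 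You instead appeal to the Laurent phenomenon a second time, in the mutated seed $\mu_k([\shs])$: if $b_k<a_k$, substituting the exchange relation for $[M_k]$ would force $[X]\cdot\bl[A']+[B']\br^{a_k-b_k}$ to equal a Laurent monomial in the $[M_i']$'s, forcing the binomial $[A']+[B']$ to be a unit, which it is not since units of a (quantum) Laurent polynomial ring are monomials and $[A'],[B']$ are distinct (they have disjoint support among $\{[M_i']\}_{i\ne k}$, and neither situation $A'=B'=1$ can occur as it would yield $[M_k][M_k']=2$). This is a sound contradiction. The trade-off: the paper's argument is shorter and leans on the specific $\de$-computation for mutated cluster variables established in \cite{KKKO18}, which is an essential ingredient of the monoidal categorification; yours is more elementary and self-contained, using only the Laurent phenomenon across seeds together with the basic ring-theoretic fact about units, at the cost of invoking the full cluster structure (including the adjacent seed) rather than a single module-theoretic formula. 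One small remark: your closing step passes from $[X]\cdot[M(\bolda)]=q^{c}[M(\boldb-\bolda)]\cdot[M(\bolda)]$ to $X\simeq M(\boldb-\bolda)$ via cancellation in the Grothendieck ring; the paper instead cites \cite[Corollary~3.7]{KKKO15} for cancellation at the module level, but your version also works since $K(\Cw)$ embeds in the quantum Laurent ring, which is a domain.
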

\begin{proof} 
We have \eqref{eq: expansion1} for some $\bolda\in \Z_{\ge 0}^\K$.
Since $X$ strongly commutes with $M_i$ for all $i \in \K$, the convolution 
$X \conv M(\bolda)$ is simple (\cite[Lemma 3.2.3]{KKKO18}).
Since the self-dual simple modules form a $\Z[q^{\pm1}]$-basis of $K(\mathcal C_w)$, we conclude that   $X \conv M(\bolda) \simeq  q^c M(\boldb)$  for some $c \in \Z$ and $\boldb \in \Z_{\ge 0}^\K$.
Recall that for  every  $k \in \Kex$ and $i \in \K$, we have 
$\de(M_k', M_i) =\delta_{ki}$, where $M_k'$ is the mutation of $M_k$ (\cite[Proposition 7.1.2\;(d)]{KKKO18}). 
It follows that $b_k=\de(M_k', M(\boldb)) = \de(M_k', X)+a_k $
so that $b_k-a_k \ge 0$ for all $k \in \Kex$.
Thus we conclude that 
$$X \simeq M(\boldb -\bold a)$$
 up to a grading shift 
 by \cite[Corollary 3.7]{KKKO15},
 as desired.
\end{proof}

\subsection{Initial seeds and dual PBW basis}
\label{subsec:initial seeds}

\medskip
Let $\underline{w} = s_{i_1} \cdots s_{i_l}$ be a reduced expression of $w\in W$,
and  set ${\bold i}=(i_1,\ldots,i_l)$. 
 Recalling that $w_{\le k}\seteq s_{i_1}\cdots s_{i_k}$, $w_{<k}\seteq s_{i_1}\cdots s_{i_{k-1}}$, we set $\beta_k:=w_{<k }(\al_{i_k})$ for $1 \le \ k \le l$.
Then  $\Delta_+ \cap w \Delta_- = \set{\beta_k}{1\le k\le l}$, 
 where $\Delta_+$ 
and $\Delta_-$ denote the set of positive roots 
and the set of negative roots, respectively.
 Let $\preceq$ be a \emph{convex order} on the set $\Delta_+$ satisfying
\eqn \beta_1\prec \beta_2 \prec \cdots \prec \beta_l \prec \gamma \quad \text{for any} \ \gamma \in  \Delta_+ \cap w\Delta_+.
\eneqn 
 See \cite{TW16,KKOP18} for details of convex orders on $\Delta_+$. 

Define
\eqn
&&S_k:=M(w_{\le k} \varpi_{i_k}, w_{< k}\varpi_{i_k}) \qtext M_k:=M(w_{\le k} \varpi_{i_k}, \varpi_{i_k}) \quad \text{for} \quad 1\le k \le l.
\eneqn
Then $S_k$ is a self-dual $R(\beta_k)$-module such that 
$\mathrm W (S_k ) \subset {\rm span}_{{\mathbb R}_{\ge 0}}\set{\gamma \in \Delta_+}{\gamma \preceq \beta_k}$   (i.e., $S_k$ is a \emph{$\preceq$-cuspidal} module). 
 For any sequence $\bolda = (\bolda_i)_{1 \le i \le l}  \in \Z_{\ge 0}^l$, the convolution product
$$P_{\bold i}(\bolda):=q^{\frac{1}{2}\sum_{k=1}^{l} \bolda_k(\bolda_k-1)} S_l^{\circ \bolda_l} \conv \cdots \conv S_1^{\circ \bolda_1}$$
 has a self-dual simple head.
Moreover, every self-dual simple module in $\mathcal C_w$ 
is isomorphic to $\hd (P_{\bold i}(\bolda))$ for some $\bolda  \in \Z_{\ge 0}^l$.
 Moreover, such an $\bolda$ is unique. 

Hence, $\{[P_{\bold i}(\bolda)]\}_{\bolda  \in \Z_{\ge 0}^l}$
is a $\Z[q^{\pm1}]$-basis of $K(\Cw)$.  It is called the dual PBW basis. 

\medskip
 
It is shown in  \cite{GLS} that  the triple  
 $(\{q^{-(d_k,d_k)/4}[M_k]\},-\La,\tB)$ is an initial quantum seed for the quantum cluster algebra $\Z[q^{\pm1/2}] \otimes_{\Z[q^{\pm1}} \Anw_{\Z[q^{\pm1}]}$, where
$d_k:=\wt(M_k)=w_{\le k}\varpi_{i_k}-\varpi_{i_k}$, $\La=(\La(M_i,M_j))_{1\le i,j\le l}$, and $\tB$ is $l \times (l-n)$-matrix given in \cite[Definition 11.1.1]{KKKO18}. 
In this case,  the index set of cluster variables are  $\K=\{1,2,\ldots, l\}$, $\Kfr=\set{k\in\K}{k_+=l+1}$, where 
$k_+\seteq\min\bl \set{s}{k<s\le l,\; i_k=i_s}\sqcup\{l+1\}\br$,  and $\Kex=\K \setminus \Kfr$,
 $n=\sharp\{i_1,\ldots,i_\ell\}=\sharp\Kfr$.

\bigskip
 We shall introduce the following notion in order to describe a property of
$S_k$'s and $M_k$'s.

\begin{definition}
A sequence $(L_1,\ldots,L_r)$ of real simple modules  in $R \gmod$ is called a \emph{normal sequence} if the composition of the 
r-matrices 

\eqn
\rmat{L_1,\ldots,L_r}\seteq
\displaystyle\prod_{1\le i <k \le r} \rmat{L_i,L_k} =&&(\rmat{L_{r-1},L_r})  \circ \cdots \circ (\rmat{L_2,L_r}\circ \cdots \circ \rmat{L_2,L_3})  \circ (\rmat{L_1,L_r} \circ \cdots  \circ \rmat{L_1,L_2}) 
\\
  &&: q^{\sum_{1\le i<k\le r}\La(L_i,L_k)}
L_1\conv \cdots \conv L_r \longrightarrow L_r \conv \cdots \conv  L_1
\eneqn 
does not vanish.
\end{definition}

By applying \cite[Proposition 3.2.8]{KKKO18} inductively, we obtain the following lemma.
\Lemma
If $(L_1,\ldots,L_r)$ is a normal sequence of real simple modules, then 
the image of $\rmat{L_1,\ldots,L_r}$ is simple and coincides with
the head of $L_1\conv \cdots \conv L_r$ 
and also with the socle of $ L_r \conv \cdots \conv  L_1$,
up to grading shifts. 
\enlemma

\begin{lemma} \label{lemma:normal1}
A sequence $(L_1,\ldots,L_r)$ of real simple  modules in $R \gmod$ is a normal sequence if and only if 
$(L_2,\ldots,L_r)$ is a normal sequence and 
$$\La(L_1, \hd(L_2\conv\cdots \conv L_r)) = \sum_{2\le j\le r} \La(L_1,L_j).$$
\end{lemma}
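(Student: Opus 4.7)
The plan is to split the r-matrix composition into two stages, analyze the first via naturality of the $z$-deformed R-matrix, and read the required degree equality off from $z$-order bookkeeping. Set $M\seteq L_2\conv\cdots\conv L_r$ and
\eqn
&&R_1\seteq\rmat{L_1,L_r}\circ\cdots\circ\rmat{L_1,L_2}\cl q^{\sum_{j\ge 2}\La(L_1,L_j)}L_1\conv M\longrightarrow M\conv L_1.
\eneqn
Directly from the definition of the r-matrix composition, $\rmat{L_1,\ldots,L_r}=(\rmat{L_2,\ldots,L_r}\conv\id_{L_1})\circ R_1$. Since $-\conv L_1$ is a faithful exact functor, any non-vanishing of $\rmat{L_1,\ldots,L_r}$ already forces $\rmat{L_2,\ldots,L_r}\ne 0$, so half of the $(\Rightarrow)$ direction---normality of $(L_2,\ldots,L_r)$---comes for free.

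Once $(L_2,\ldots,L_r)$ is known to be normal, the preceding lemma factors $\rmat{L_2,\ldots,L_r}$ as a surjection $\pi\cl M\twoheadrightarrow S\seteq\hd(M)$ followed by an injection $\iota\cl S\hookrightarrow L_r\conv\cdots\conv L_2$. Exactness of convolution makes $\iota\conv\id_{L_1}$ injective, so the vanishing question collapses to $\rmat{L_1,\ldots,L_r}\ne 0\Longleftrightarrow(\pi\conv\id_{L_1})\circ R_1\ne 0$, putting both directions on the same footing.

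For the degree analysis I would use naturality of the unnormalized $z$-deformed R-matrix with respect to $\pi$, which gives
\eqn
&&(\pi\conv\id)\circ R_{(L_1)_z,M}=R_{(L_1)_z,S}\circ(\id\conv\pi),
\eneqn
together with the iterated factorization $R_{(L_1)_z,M}=z^{\sum_j s(L_1,L_j)}\,R_1(z)$, where $R_1(z)$ is the $z$-family whose specialization at $z=0$ is $R_1$ and $s(\cdot,\cdot)$ denotes the $z$-order of the unnormalized R-matrix. Comparing leading powers of $z$ on both sides would then show $(\pi\conv\id_{L_1})\circ R_1\ne 0$ if and only if $s(L_1,S)=\sum_{j\ge 2}s(L_1,L_j)$. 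Since the unnormalized intertwiner degree $\deg R_{(L_1)_z,-}$ depends only on the weight, is additive under $\conv$, and $\wt(S)=\wt(M)$, the identity $\La(L_1,N)=\deg R_{(L_1)_z,N}-2\,s(L_1,N)$ converts this equivalence into the desired $\La(L_1,S)=\sum_{j\ge 2}\La(L_1,L_j)$.

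The main obstacle I anticipate is the $z$-order bookkeeping: one must establish the chain $s(L_1,S)\ge s(L_1,M)\ge\sum_{j}s(L_1,L_j)$, with the first inequality coming from naturality combined with surjectivity of $\id\conv\pi$ and $\rmat{L_1,S}\ne 0$, and the second from the defect of the naive composition $R_1$. The content of the lemma is that making both inequalities into equalities simultaneously is exactly what the single equation $\La(L_1,\hd M)=\sum_j\La(L_1,L_j)$ enforces, given the baseline $\La(L_1,S)\le\sum_j\La(L_1,L_j)$ that always holds.
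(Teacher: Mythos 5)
Your proposal is correct and proceeds along essentially the same lines as the paper, so I will keep the comparison short. The paper factors the composite r-matrix through a commutative square, identifies the two vertical arrows with $\rmat{L_1,\,L_2\conv\cdots\conv L_r}$ and $\rmat{L_1,\,L_r\conv\cdots\conv L_2}$ by citing \cite[Lemma 3.1.5 (ii)]{KKKO18}, introduces the induced map $\phi$ on the images, and then invokes \cite[Proposition 3.2.8]{KKKO18} to translate $\phi\ne 0$ into the condition $\La(L_1,L_2\conv\cdots\conv L_r)=\La(L_1,\hd(L_2\conv\cdots\conv L_r))$. You instead work directly with the single factorization $\rmat{L_1,\ldots,L_r}=(\rmat{L_2,\ldots,L_r}\conv\id_{L_1})\circ R_1$, factor $\rmat{L_2,\ldots,L_r}=\iota\circ\pi$ through the head, and carry out the $z$-order bookkeeping explicitly: the relation $(\pi\conv\id)\circ R_{(L_1)_z,M}=R_{(L_1)_z,S}\circ(\id\conv\pi)$ together with $R_{(L_1)_z,M}=z^{\sum_j s(L_1,L_j)}R_1(z)$ yields $s(L_1,S)\ge\sum_j s(L_1,L_j)$, with equality precisely when $(\pi\conv\id)\circ R_1\ne 0$, and the identity $\La(L_1,N)=\deg R_{(L_1)_z,N}-2s(L_1,N)$ (using that $\deg R_{(L_1)_z,-}$ depends only on weight and is additive, with $\wt(S)=\wt(M)$) converts this into the stated $\La$-equality. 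In effect you reprove the needed portion of \cite[Proposition 3.2.8]{KKKO18} inline rather than citing it; otherwise the decomposition, the role of the head of $L_2\conv\cdots\conv L_r$, and the degree criterion are the same as in the paper's argument.
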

\begin{proof}
 Set $\rmat{}'=\rmat{L_2,\ldots, L_r}
\cl L_2\conv \cdots \conv L_r \longrightarrow L_r \conv \cdots \conv  L_2$. 
In the diagram below,  the outer square  is commutative, because the both compositions of horizontal and vertical homomorphisms are equal to  $\rmat{L_1,\ldots,L_r}$. 
 Note that the vertical homomorphisms are non-zero and are equal to $\rmat{L_1,L_2\circ\cdots \circ L_r}$ and $\rmat{L_1,L_r\circ\cdots \circ L_2}$, respectively, by \cite[Lemma 3.1.5 (ii)]{KKKO18}.
 Let us denote by  $\phi$ the induced homomorphism between the images of horizontal homomorphisms.
$$\xymatrix@C=12ex@R=3ex
{L_1\conv (L_2\conv \cdots \conv L_r) 
\ar[rr]^{L_1 \circ \rmat{}'}\ar@{->}[ddd]|
{
\rmat{L_1,L_r}\circ \cdots \circ \rmat{L_1,L_2}\ake[3ex]}
\ar@{->>}[dr] 
& &L_1 \conv (L_r\conv \cdots \conv L_2)
\ar@{->}[ddd]|{
\rmat{L_1,L_2}\circ \cdots \circ \rmat{L_1,L_r}\ake[3ex]}
\\
&L_1 \conv \Im \rmat{}' \ar@{^{(}->}[ur] \ar@{->}[d]^\phi & \\
&\Im \rmat{}' \conv L_1  \ar@{^{(}->}[dr]& \\
(L_2\conv \cdots \conv L_r)\conv L_1  \ar@{->>}[ur] \ar@{->}
[rr]_{\rmat{}'\circ L_1}
&&(L_r\conv \cdots \conv L_2) \conv L_1
}
$$
Since $\Im(\rmat{L_1,\ldots,L_r})=\phi(L_1\conv \Im \rmat{}')$, we have that  
$\rmat{ L_1,\ldots,L_r}\neq 0$ if and only if $\rmat{}'\neq 0$ and $\phi\neq 0$. 
Under the condition that $\rmat{}'\neq 0$ (and hence $\Im \rmat{}' = \hd(L_2\conv \cdots \conv L_r)$ is simple), 
we have that  $\phi \neq 0$ if and only if $\phi=\rmat{L_1, \Im \rmat{}'}$, which is equivalent to the condition that $\La(L_1,L_2\conv \cdots \conv L_r) = \La(L_1,  \hd(L_2\conv \cdots \conv L_r))$ by \cite[Proposition 3.2.8]{KKKO18}. It completes the proof.
\end{proof}
In a similar way, we  have the following lemma.
\begin{lemma} \label{lemma:normal2}
A sequence $(L_1,\ldots,L_r)$ of real simple modules in $R \gmod$ is a normal sequence if and only if 
$(L_1,\ldots,L_{r-1})$ is a normal sequence and 
$$\La(\hd(L_1\conv\cdots \conv L_{r-1}), L_r) = \sum_{1\le j\le r-1} \La(L_j,L_r). $$
\end{lemma}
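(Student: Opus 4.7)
This lemma is the mirror image of Lemma~\ref{lemma:normal1}: whereas that lemma peels off the leftmost factor $L_1$, the present one peels off the rightmost factor $L_r$. The argument follows the same template, with the roles of left and right interchanged throughout.

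The first step is to regroup the composition defining $\rmat{L_1,\ldots,L_r}$ so that all R-matrices acting only on $L_1,\ldots,L_{r-1}$ are applied before those involving $L_r$. Using the braid relations satisfied by R-matrices (so that different reduced expressions of the longest element of $\sym_r$ yield the same composition) together with the identity $\rmat{X\conv Y,\,Z}=(\rmat{X,Z}\conv Y)\circ(X\conv \rmat{Y,Z})$ coming from \cite[Lemma 3.1.5~(ii)]{KKKO18}, one obtains
\begin{equation*}
\rmat{L_1,\ldots,L_r}\;=\;\rmat{L_{r-1}\conv\cdots\conv L_1,\;L_r}\circ\bl\rmat{L_1,\ldots,L_{r-1}}\conv L_r\br,
\end{equation*}
where the first factor equals $\rmat{L_{r-1},L_r}\circ\cdots\circ\rmat{L_1,L_r}$.

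Setting $\rmat{}''\seteq\rmat{L_1,\ldots,L_{r-1}}$, I would then form the mirror of the diagram in the proof of Lemma~\ref{lemma:normal1}. Its outer square has horizontal arrows $\rmat{}''\conv L_r\col(L_1\conv\cdots\conv L_{r-1})\conv L_r\to (L_{r-1}\conv\cdots\conv L_1)\conv L_r$ (top) and $L_r\conv\rmat{}''\col L_r\conv(L_1\conv\cdots\conv L_{r-1})\to L_r\conv(L_{r-1}\conv\cdots\conv L_1)$ (bottom), and vertical arrows $\rmat{L_1\conv\cdots\conv L_{r-1},L_r}$ and $\rmat{L_{r-1}\conv\cdots\conv L_1,L_r}$, both nonzero by \cite[Lemma 3.1.5~(ii)]{KKKO18}. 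The outer composition equals $\rmat{L_1,\ldots,L_r}$ by the displayed identity above. The horizontal arrows factor through $\Im\rmat{}''\conv L_r$ (top) and $L_r\conv\Im\rmat{}''$ (bottom) respectively, giving an induced morphism $\phi\col\Im\rmat{}''\conv L_r\to L_r\conv\Im\rmat{}''$.

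It follows that $\rmat{L_1,\ldots,L_r}\neq 0$ if and only if $\rmat{}''\neq 0$ and $\phi\neq 0$. Under the hypothesis $\rmat{}''\neq 0$, the image $\Im\rmat{}''=\hd(L_1\conv\cdots\conv L_{r-1})$ is a simple module, so $\phi\neq 0$ is equivalent to $\phi$ agreeing up to a nonzero scalar with $\rmat{\Im\rmat{}'',\,L_r}$. By the right-hand counterpart of \cite[Proposition 3.2.8]{KKKO18}, this is equivalent to the equality $\La(L_1\conv\cdots\conv L_{r-1},\,L_r)=\La(\hd(L_1\conv\cdots\conv L_{r-1}),\,L_r)$; since $\La$ is additive in its first argument on convolution products, the left-hand side equals $\sum_{j=1}^{r-1}\La(L_j,L_r)$, and this yields the desired characterization. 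The main obstacle is verifying the first-step rewriting of $\rmat{L_1,\ldots,L_r}$, since the definition in the text prescribes one particular order for the R-matrices; this reduces to a braid identity and an inductive use of the formula from \cite[Lemma 3.1.5~(ii)]{KKKO18}.
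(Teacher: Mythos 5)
Your proof is correct and is precisely the argument the paper intends: the paper proves Lemma~\ref{lemma:normal1} in detail and states Lemma~\ref{lemma:normal2} follows ``in a similar way,'' and your proposal is the faithful mirror, peeling off $L_r$ on the right in place of $L_1$ on the left, with the same commuting square, the same factorization through $\Im\rmat{}''$, and the same appeal to \cite[Lemma 3.1.5]{KKKO18} and \cite[Proposition 3.2.8]{KKKO18}. You are also right to flag the regrouping step as the one place needing care: because of the specific ordering fixed in the definition of $\rmat{L_1,\ldots,L_r}$, rewriting it as $\rmat{L_{r-1}\conv\cdots\conv L_1,L_r}\circ(\rmat{L_1,\ldots,L_{r-1}}\conv L_r)$ genuinely requires the braid/Yang--Baxter relations for the renormalized $R$-matrices (the same fact the paper uses implicitly to claim that both ways around the outer square in the proof of Lemma~\ref{lemma:normal1} give $\rmat{L_1,\ldots,L_r}$).
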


\Cor\label{cor:normal}
Let $(L_1,\ldots,L_r)$ be a sequence of real simple modules in $R \gmod$.
If $(L_1,\ldots,L_{r-1})$ is a normal sequence, then the following statements hold:
\bnum
\item If $\tL(L_k,L_r)=0$ for $1\le k\le r-1$, then
$(L_1,\ldots,L_r)$ is a normal sequence.
\item If $W^*(L_k)\cap W(L_r)\subset\{0\}$ for $1\le k\le r-1$, then
$(L_1,\ldots,L_r)$ is a normal sequence.
\item If $L_k$ and $L_r$ strongly commute for $1\le k\le r-1$, then
$(L_1,\ldots,L_r)$ is a normal sequence.
\ee
Similarly, if $(L_2,\ldots,L_{r})$ is a normal sequence,  then the following statements hold:
\bnum
\item If $\tL(L_1,L_k)=0$ for $2\le k\le r$, then
$(L_1,\ldots,L_r)$ is a normal sequence.
\item
If $W^*(L_1)\cap W(L_k)\subset\{0\}$ for $2\le k\le r$, then
$(L_1,\ldots,L_r)$ is a normal sequence.
\item
If $L_1$ and $L_k$ strongly commute for $2\le k\le r$, then
$(L_1,\ldots,L_r)$ is a normal sequence.
\ee
\encor

\Proof
We shall prove only the first set of statements.

\noi
(i)
We have 
$$\La(\hd(L_1\conv\cdots\conv L_{r-1}), L_r)\le
\sum_{k=1}^{r-1}\La(L_k, L_r)$$
by  \cite[Proposition 3.2.8]{KKKO18}. It implies
\eqn
&&0\le\tLa(\hd(L_1\conv\cdots\conv L_{r-1}), L_r)\le
\sum_{k=1}^{r-1}\tLa(L_k, L_r)=0,
\eneqn
where the first inequality follows from \eqref{eq:Lapos}. 
Hence we obtain
 $\tLa(\hd(L_1\conv\cdots\conv L_{r-1}), L_r)=
\sum_{k=1}^{r-1}\tLa(L_k, L_r)$, which implies
 $\La(\hd(L_1\conv\cdots\conv L_{r-1}), L_r)=
\sum_{k=1}^{r-1}\La(L_k, L_r)$.

\smallskip\noi(ii) We have
$\tLa(L_k,L_r)=0$ for $1\le k<r$ 
 by \cite[Proposition 2.12]{KKOP18}. Hence (ii) follows from (i).

\smallskip\noi
(iii) follows from  \cite[Proposition 3.2.13]{KKKO18}. 
\QED

\Cor
A sequence $(L_1,\ldots,L_r)$ of real simple modules in $R \gmod$ is a normal sequence if $\tL(L_i,L_j)=0$ for any $i,j$ such that $1\le i<j\le r$.
\encor

The following proposition is a consequence of 
Corollary~\ref{cor:normal}.
\begin{prop} \label{prop:normal sequence}
For any $\bolda = (\bolda_i)_{1 \le i \le l}, \boldb=(\boldb_i)_{1 \le i \le l} \in \Z_{\ge 0}^l$, the sequence 
\eqn 
(S_l^{\circ \bolda_l}, S_{l-1}^{\circ \bolda_{l-1}}, \ldots, S_1^{\circ \bolda_1}, M_1^{\circ \boldb_1}, M_2^{\circ \boldb_2}, \ldots, M_l^{\circ \boldb_l} )
\eneqn
is a normal sequence.
\end{prop}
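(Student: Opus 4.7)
The plan is to split the sequence at the cuspidal/initial-cluster interface and build up normality via Corollary~\ref{cor:normal}. Set
\[
\mathscr A \seteq (S_l^{\circ \bolda_l},\ldots,S_1^{\circ \bolda_1}),\qquad
\mathscr B \seteq (M_1^{\circ \boldb_1},\ldots,M_l^{\circ \boldb_l}),
\]
so that the proposition asserts normality of the concatenation $\mathscr A\cdot\mathscr B$. My approach is to first check normality of each block separately and then glue.

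\emph{Normality of $\mathscr B$.} By Theorem~\ref{thm:moncat} and the description of initial seeds in Subsection~\ref{subsec:initial seeds}, the family $\{M_k\}_{k=1}^{l}$ is mutually strongly commuting. Hence each $M_k^{\circ \boldb_k}$ is a real self-dual simple (by \eqref{eq:selfdualconvs}) and any two of them strongly commute. Iterated application of Corollary~\ref{cor:normal}(iii) yields the claim.

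\emph{Normality of $\mathscr A$.} Cuspidality of $S_j$ gives $W(S_j)\subset{\mathbb R}_{\ge 0}\{\gamma\in\Delta_+\mid\gamma\preceq\beta_j\}$ and, dually, $W^*(S_j)\subset{\mathbb R}_{\ge 0}\{\gamma\in\Delta_+\mid\gamma\succeq\beta_j\}$; both bounds persist under convolution powers. I would prepend $S_j^{\circ \bolda_j}$ to the inductively normal tail $(S_{j-1}^{\circ \bolda_{j-1}},\ldots,S_1^{\circ \bolda_1})$ via Corollary~\ref{cor:normal}(ii) (second set); the required $W^*(S_j^{\circ \bolda_j})\cap W(S_i^{\circ \bolda_i})\subset\{0\}$ for $i<j$ follows because the cones ${\mathbb R}_{\ge 0}\{\gamma\succeq\beta_j\}$ and ${\mathbb R}_{\ge 0}\{\gamma\preceq\beta_i\}$ meet only at $0$ when $\beta_i\prec\beta_j$, a standard consequence of the convexity of $\preceq$.

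\emph{Gluing.} The principal obstacle is to show that $\mathscr A\cdot\mathscr B$ is normal. I would append the modules $M_j^{\circ \boldb_j}$ (in the order $j=1,\ldots,l$) to the right end of $\mathscr A$, verifying Corollary~\ref{cor:normal}(ii) (first set) at each stage. Since the $M_j$'s pairwise strongly commute, the only nontrivial check is $W^*(S_i^{\circ \bolda_i})\cap W(M_j^{\circ \boldb_j})\subset\{0\}$ for all relevant $i$---equivalently $\tL(S_i,M_j)=0$, by \cite[Proposition~2.12]{KKOP18}. The delicate case is $\beta_j\succeq\beta_i$, where the two naive cone bounds are not manifestly disjoint. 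Here the plan is to exploit the realization of $M_j$ as an iterated head of cuspidals $S_s$ indexed by $\{s\le j\mid i_s=i_j\}$ (the T-system / initial-seed description of \cite[Section~11]{KKKO18}), and then reduce the disjointness to the convex-order control already established for $\mathscr A$.
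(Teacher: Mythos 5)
Your block-decomposition strategy differs from the paper's proof, and the gap you flag in the gluing step is real and is not closed by your sketch. The paper instead inducts on $l$: the inductive hypothesis is that $(S_{l-1}^{\circ\bolda_{l-1}},\ldots,S_1^{\circ\bolda_1},M_1^{\circ\boldb_1},\ldots,M_{l-1}^{\circ\boldb_{l-1}})$ is normal (this is the statement for the shorter reduced word $s_{i_1}\cdots s_{i_{l-1}}$), and the inductive step adds only two new modules. It prepends $S_l^{\circ\bolda_l}$ using Corollary~\ref{cor:normal}(ii), which is valid because every other module in sight has root--support strictly $\prec\beta_l$ in the convex order, so the cone disjointness is manifest; and it appends $M_l^{\circ\boldb_l}$ using Corollary~\ref{cor:normal}(iii), because $M_l$ is a frozen determinantial module and strongly commutes with every $S_k$ and $M_k$ (\cite[Theorem~6.25]{Kimu12}). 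Thus the paper never has to confront the interaction $W^*(S_i)\cap W(M_j)$ with $i\le j<l$ --- those pairs live entirely inside the inductively-assumed normal subsequence, where no pairwise vanishing is needed.

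Your approach, by contrast, tries to establish the stronger pairwise statement $W^*(S_i^{\circ\bolda_i})\cap W(M_j^{\circ\boldb_j})\subset\{0\}$ (equivalently $\tL(S_i,M_j)=0$) for \emph{all} $i$ and $j$, including $i\le j$. You correctly observe that the naive cuspidal cone bounds $W^*(S_i)\subset\mathbb{R}_{\ge0}\{\gamma\succeq\beta_i\}$ and $W(M_j)\subset\mathbb{R}_{\ge0}\{\gamma\preceq\beta_j\}$ fail to give disjointness when $\beta_i\preceq\beta_j$. But the proposed repair --- ``exploit the realization of $M_j$ as an iterated head of the cuspidals $S_s$ with $s\le j$, $i_s=i_j$, and reduce to the convex-order control for $\mathscr A$'' --- does not amount to an argument: expanding $M_j$ as a head of $S_j\circ S_{j_-}\circ\cdots$ only re-derives the same bound $W(M_j)\subset\mathbb{R}_{\ge0}\{\gamma\preceq\beta_j\}$, so the overlapping cones remain. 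You would need a genuinely new ingredient (a direct computation of $\tL(S_i,M_j)$ via determinantial-module identities, say), which you do not supply. Normality of a sequence is also weaker than pairwise $\tL$-vanishing, so you may well be trying to prove something that is both unnecessary and harder. The fix is to abandon the block-and-glue architecture and follow the paper's induction, which sidesteps the bad pairs entirely.

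Two minor remarks that are correct as stated: the normality of the block $\mathscr B$ via pairwise strong commutativity and Corollary~\ref{cor:normal}(iii) is fine, and the normality of $\mathscr A$ via the cone argument and Corollary~\ref{cor:normal}(ii) (second set) is also fine, since there the convex order makes $\beta_i\prec\beta_j$ for $i<j$ and the cones really are disjoint.
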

\begin{proof}
 Note that the case $l=1$ is obvious.
Hence we assume $l>1$ and proceeds by induction on $l$. 
Then, we may assume that $(S_{l-1}^{\circ \bolda_{l-1}} ,  \ldots ,  S_1^{\circ \bolda_1},  M_1^{\circ \boldb_1},  \ldots ,  M_{l-1}^{\circ \boldb_{l-1}})$ is a normal sequence.

Since $S_k$ is a $\preceq$-cuspidal $R(\beta_k)$-module for $1\le k \le l$, we have, in particular 
$${\mathrm W}^*(S_l^{\circ \bolda_l}) \subset {\rm span}_{{\mathbb R}_{\ge 0}}\set{\gamma \in \Delta_+}{\gamma \succeq \beta_l}\ \text{(see \cite{KKOP18}).}   $$ 

On the other hand, 
since  $M_k$ is a quotient of a convolution product of $S_j$'s with $j \le k$, we have
\eqn
{\rm W} (S_k), \ {\rm W} (M_k)   \ \subset  {\rm span}_{{\mathbb R}_{\ge 0}}\set{\gamma \in \Delta_+}{\gamma \preceq \beta_{k}\prec \beta_{l}} \quad \text{for} \ 1 \le k \le l-1.
\eneqn
Hence, 
$$W^*(S_l^{\circ \bolda_l})\cap W(S_k^{\circ \bolda_k})=\{0\}\quad\text{and}\quad
W^*(S_l^{\circ \bolda_l})\cap W(M_k^{\circ \boldb_k})=\{0\}. $$
for $1\le k\le l-1$, 
Thus by Corollary~\ref{cor:normal} implies that
$$
(S_l^{\circ \bolda_l}, S_{l-1}^{\circ \bolda_{l-1}}, \ldots, S_1^{\circ \bolda_1}, M_1^{\circ \boldb_1}, M_2^{\circ \boldb_2}, \ldots, M_{l-1}^{\circ \boldb_{l-1}} )
$$
is a normal sequence.

\smallskip

Finally,
since $M_l$ strongly commutes with $S_k^{\circ \bolda_k}$ and $M_k^{\circ \boldb_k}$ for $1\le k \le l$  by \cite[Theorem 6.25]{Kimu12},
the assertion follows  again from Corollary~\ref{cor:normal}.
\end{proof}

The following proposition is a direct consequence of Proposition \ref{prop:normal sequence} and Lemma \ref{lemma:normal2}.
\begin{prop}
For $1\le k\le l$ and $\bolda \in \Z^\K_{\ge 0}$, we have
\eqn
\La(\hd(S_l^{\circ \bolda_l} \conv \cdots \conv S_1^{\circ \bolda_1}) , M_k) =\sum_{1\le j \le l} \bolda_j \La(S_j,M_k).
\eneqn
\end{prop}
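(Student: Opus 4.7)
The plan is to apply Proposition~\ref{prop:normal sequence} so as to produce a normal sequence ending in $M_k$, and then to read off the desired identity from Lemma~\ref{lemma:normal2}.

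First I would fix $k\in\{1,\ldots,l\}$ and set $\boldb=(\delta_{jk})_{1\le j\le l}\in\Z^l_{\ge 0}$. Applying Proposition~\ref{prop:normal sequence} to the pair $(\bolda,\boldb)$ and discarding the factors $M_j^{\circ 0}\simeq\one$ for $j\ne k$ (the $R$-matrix with the unit object is the identity, of degree $0$, so dropping these factors from a normal sequence produces a normal sequence) yields that
$$(S_l^{\circ\bolda_l},\,S_{l-1}^{\circ\bolda_{l-1}},\,\ldots,\,S_1^{\circ\bolda_1},\,M_k)$$
is a normal sequence.

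Next I would invoke Lemma~\ref{lemma:normal2} for this sequence, taking $L_1,\ldots,L_l$ to be the $S_i^{\circ\bolda_i}$-factors in the displayed order and $L_{l+1}=M_k$. This gives
$$\La\bigl(\hd(S_l^{\circ\bolda_l}\conv\cdots\conv S_1^{\circ\bolda_1}),\,M_k\bigr)\;=\;\sum_{j=1}^{l}\La(S_j^{\circ\bolda_j},\,M_k).$$

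To finish, I must identify $\La(S_j^{\circ\bolda_j},M_k)=\bolda_j\,\La(S_j,M_k)$ for each $j$. Since $S_j$ is real it strongly commutes with itself, so Corollary~\ref{cor:normal}(iii) shows that the sequence consisting of $\bolda_j$ copies of $S_j$ followed by $M_k$ is normal; together with the fact that $S_j^{\circ\bolda_j}$ is a grading shift of its self-dual simple head, a second application of Lemma~\ref{lemma:normal2} yields the additivity $\La(S_j^{\circ\bolda_j},M_k)=\bolda_j\,\La(S_j,M_k)$. Summing over $j$ produces the stated equality. I do not anticipate a genuine obstacle here: the argument is pure bookkeeping via two applications of Lemma~\ref{lemma:normal2}. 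The only mildly delicate point is reconciling the grouped entry $S_j^{\circ\bolda_j}$ in the first normal sequence with the expanded form $S_j\conv\cdots\conv S_j$ to which Lemma~\ref{lemma:normal2} applies directly, but this is automatic from the associativity of $\conv$ together with the reality of each $S_j$.
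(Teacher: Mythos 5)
Your overall route is exactly the one the paper intends: extract a normal sequence from Proposition~\ref{prop:normal sequence}, then read off the additivity of $\La$ from Lemma~\ref{lemma:normal2}. Steps one and two are sound — taking $\boldb=(\delta_{jk})_{1\le j\le l}$, discarding the trivial factors $M_j^{\circ 0}\simeq\one$ (which is harmless since the r\nobreakdash-matrices with $\one$ are identities), and applying Lemma~\ref{lemma:normal2} with the last entry $M_k$ correctly yields
$\La\bigl(\hd(S_l^{\circ\bolda_l}\conv\cdots\conv S_1^{\circ\bolda_1}),M_k\bigr)=\sum_{j=1}^{l}\La(S_j^{\circ\bolda_j},M_k)$.

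The gap is in your step three. You invoke Corollary~\ref{cor:normal}(iii) to claim that the sequence consisting of $\bolda_j$ copies of $S_j$ followed by $M_k$ is normal, and your only stated hypothesis is that $S_j$ strongly commutes with itself. But both halves of part (iii) of that corollary require the distinguished term to strongly commute with \emph{every other entry of the sequence}: in the first version each $L_i$ must strongly commute with the last entry $L_r=M_k$, and in the ``Similarly'' version the first entry $L_1=S_j$ must strongly commute with every $L_k$, again including $M_k$. In either reading you would need $S_j$ to strongly commute with $M_k$, which is not known here and in general fails (e.g.\ when $j<k$). Self-commutativity of $S_j$ only establishes normality of $(S_j,\ldots,S_j)$, not of $(S_j,\ldots,S_j,M_k)$. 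The identity $\La(S_j^{\circ\bolda_j},M_k)=\bolda_j\,\La(S_j,M_k)$ that you need is nevertheless true — it is the standard additivity of $\La$ in a real self-commuting factor from \cite{KKKO18}, which the paper uses tacitly throughout this section and in particular in calling the present proposition a ``direct consequence'' — but as written your argument does not establish it, and the citation of Corollary~\ref{cor:normal}(iii) is a misquotation of its hypothesis. Cite that additivity directly rather than attempting to derive it from Corollary~\ref{cor:normal}.
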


\section{Triangular basis}\label{sec:tri}

\subsection{Localization and duals of simple objects} \label{subsec:Loc}

Let $w$ be an element  of  the Weyl group $W$.
In this subsection, we recall the localization of the monoidal category $\mathcal C_w$ via a real commuting family  of central objects following \cite{Local}.
Set $C_i:=M(w \varpi_i, \varpi_i)$.
Then,  $\set{[C_i]}{i \in I,  C_i\not\simeq\one}$ is the set of frozen variables of $\Anw$ up to a power of $q$.
 For each $\mu = \sum \mu_i \varpi_i \in \wl^+$, we denote by $C_\mu$ the  self-dual convolution product  $q^c \conv_{i\in I} C_{i}^{\circ \mu_i}$ for some $c \in \Z$.
\ Then we have  $C_\mu\simeq M(w\mu, \mu)$.
Note that 
$$C_\mu\conv C_{\mu'}\simeq q^{\la(\mu,\mu')}C_{\mu+\mu'}$$ holds
for any $\mu,\mu'\in\wl^+$. Here $\la$ is the bilinear form on $\wl$ defined by
$\la(\mu,\mu')=(\mu,w\mu'-\mu')$. 

 In \cite{Local}, it is proved that
there exist  a monoidal category $\Cwt = \mathcal C_w [C_i^{\circ -1}; i\in I]$   and a monoidal functor $\Phi: \mathcal C_w \to \Cwt$ satisfying 
 the following conditions. 
\begin{enumerate}
\item The objects $\Phi(C_i)$ are invertible in $\Cwt$; i.e., 
there exists an object $\Phi(C_i)^{-1}$ in $\tCw$ such that
$\Phi(C_i)\conv\Phi(C_i)^{-1}\simeq\one$ and $\Phi(C_i)^{-1}\conv\Phi(C_i)\simeq \one$. 
\item  For any monoidal functor $\Psi : \mathcal C_w \to \mathcal T$ to another monoidal category $\mathcal T$ in which $\Psi(C_i)$ is
invertible for every $i$, there exists a monoidal functor $\Psi'
\cl\tCw\to\mathcal T$ such that $\Psi \simeq \Psi' \circ \Phi$. Moreover $\Psi'$ is unique up to an isomorphism.
\setcounter{myc}{\value{enumi}}
\ee
 Then there exists a real commuting family $\{\wdt C_\mu\}_{\mu\in\wl}$ in $\tCw$ 
such that
$\wdt C_\mu=\Phi(C_\mu)$ for every $\mu\in \wl^+$ and
$\wdt C_\mu\conv \wdt C_{\mu'}\simeq  q^{\la(\mu,\mu')}\wdt C_{\mu+\mu'}$
for every $\mu,\mu'\in \wl$.

The localization $\tCw$ satisfies also the following properties.

\be\setcounter{enumi}{\value{myc}}
 \item 
Every simple object of $\Cwt$ is isomorphic to $\Phi(S) \circ \wdt C_\mu$ for some simple object $S$ of $\mathcal C_w$ and $\mu\in \wl$.
\item
For two simple objects $S$ and $S'$ in $\mathcal C_w$,
$\Phi(S)\conv \wdt C_\mu\simeq \Phi(S') \conv\wdt  C_{\mu'}$ in $\Cwt$
if and only if 
$ S\conv C_{\mu+\la}\simeq   S' \conv C_{\mu'+\la}$ 
in 
$\mathcal C_w$
for some $\la\in\wl$
such that $\mu+\la$, $\mu'+\la\in \wl^+$.
Here we ignore grading shifts.
\end{enumerate}

 Note that the category $\Cwt$ is abelian and every 
 object  has 
 finite length.
 Moreover $\Phi\cl \Cw\to\tCw$ is exact. 
  The grading shift functor $q$ and the contravariant functor $M \mapsto M^*$ on $\mathcal C_w$   are extended to $\Cwt$. 
Moreover  they satisfy 
$$(M\conv N)^*\simeq q^{(\wt(M),\wt(N))}N^*\conv M^*$$
 for $M,N\in\tCw$.
Here, for $M\simeq\Phi(L)\conv\wdt C_\la\in\tCw$
with $L\in\Cw$ and $\la\in\wl$, we set
 $\wt(M)=\wt(L)+(w\la-\la)$ .

For any simple  object  $M\in\tCw$, there exists a unique $n\in\Z$  such that $(q^nM)^* \simeq q^n M$.
We say that $q^n M$ is self-dual in this case. 

We denote by $\Irrsd(\tCw)$ the set of the isomorphism classes of self-dual simple objects of $\tCw$.

The Grothendieck ring $K(\Cwt)$ of $\Cwt$ is a  $\Z[q^{\pm1}]$-algebra with a basis consisting of the class of self-dual simple objects.  
 Moreover it  is  isomorphic to 
the right ring of quotients of the ring $K(\mathcal C_w)$ with respect to the  multiplicative subset 
$$S:=\set{q^k\prod_{i\in I}[C_i]^{a_i}}{k\in \Z, \ (a_i)_{i \in I} \in \Z_{\ge 0}^{I} }.$$
Hence the ring $\Z[q^{\pm1/2}] \otimes_{\Z[q^{\pm1}]}K(\Cwt)$, which is a localization of $\Z[q^{\pm1/2}] \otimes_{\Z[q^{\pm1}]}K(\mathcal C_w)$, 
is the $\Z[q^{\pm1/2}]$-subalgebra of the skew field of fractions $K$ of an initial quantum torus $ \Z[q^{\pm1/2}]  [X_i^{\pm1} \ |   \ i \in \K]$ generated by all the exchangeable cluster variables,  the frozen variables, and the inverses of the frozen variables. Thus it is the quantum cluster algebra in the sense of \cite{BZ05}.

Recall that a pair   $(\eps\cl X \otimes  Y \rightarrow \one ,\;  \eta\cl \one \rightarrow Y \otimes X)$ of morphisms in a monoidal category  with a unit object $\one$
is called an \emph{adjunction} if 
\begin{enumerate}
\item[(a)] $X \simeq X \otimes \one \To[{\;X \otimes \eta\;}] X \otimes Y \otimes X \To[{\;\eps \otimes X\;}] \one \otimes X \simeq X$
is the identity of $X$, and
\item[(b)]  $Y \simeq \one \otimes Y \To[{\;\eta \otimes Y\;}]
Y \otimes X \otimes Y \To[{\;Y \otimes \eps\;}]Y \otimes \one \simeq Y$
is the identity of $Y$.
\end{enumerate} 
In this case, the pair $(X,Y)$ is called a  \emph{dual pair}. 
When $(\eps, \eta)$ is an adjunction, we say that $X$ is a \emph{left dual} of 
$Y$ and $Y$ is a \emph{right dual} of  $X$. 
Note that a left dual (resp.\ right dual) of an object is unique up to a unique isomorphism if it exists.

 The following theorem plays an important role in this paper. 
\begin{theorem}[{\cite{Local}}] \label{thm:dual}
Every simple object $M$ in $\Cwt$ has a right dual and a left dual.
\end{theorem}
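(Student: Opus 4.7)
The plan is to reduce the statement to constructing duals for $\Phi(S)$ with $S$ a simple module in $\Cw$, and then to produce the adjunction morphisms from the R-matrix between $S$ and $S^*$, twisted by an appropriate $\wdt C_\mu$ so that the weight obstructions vanish after localization.

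First, I would reduce to the case $M=\Phi(S)$ with $S$ simple in $\Cw$. By property (3) of the localization, every simple in $\tCw$ has the form $\Phi(S)\conv \wdt C_\mu$ up to a grading shift. Since $\wdt C_\mu$ is invertible in $\tCw$, with inverse $\wdt C_{-\mu}$ up to a power of $q$, both ``having a right dual'' and ``having a left dual'' are stable under convolving with $\wdt C_\mu$: if $(\Phi(S),N)$ is a dual pair, then so is $(\Phi(S)\conv \wdt C_\mu,\;\wdt C_{-\mu}\conv N)$ up to grading shift. Hence only the case $M=\Phi(S)$ needs to be treated.

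Next, I would propose as a right dual of $\Phi(S)$ an object of the form $N\seteq q^c\,\Phi(S^*)\conv \wdt C_\mu$, with $\mu\in\wl$ and $c\in\Z$ chosen so that $N\conv \Phi(S)$ sits in weight zero. The adjunction maps
\[
\epsilon\col \Phi(S)\conv N\to \one \qtext \eta\col \one\to N\conv \Phi(S)
\]
would be built by first producing non-zero homomorphisms
\[
S\conv S^*\twoheadrightarrow C_\mu \qtext C_\mu \hookrightarrow S^*\conv S
\]
in $\Cw$ (up to grading shifts), realized respectively as the projection onto the head and the inclusion of the socle. The key input is that, for a suitable $\mu$, the determinantal module $C_\mu$ appears as a simple subquotient in the head of $S\conv S^*$ and in the socle of $S^*\conv S$; this should follow from the analysis of R-matrices, quantum minors and cuspidal decompositions developed in \cite{KKKO18} and \cite{KKOP18}, together with the characterization of frozen variables. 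After applying $\Phi$ and convolving with the invertible object $\wdt C_{-\mu}$, these morphisms yield the desired $\epsilon$ and $\eta$ in $\tCw$.

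Finally, the main obstacle is the verification of the two triangle identities, namely that the composites
\[
\Phi(S)\To \Phi(S)\conv N\conv \Phi(S)\To \Phi(S),\qquad
N\To N\conv \Phi(S)\conv N\To N
\]
are the identity morphisms (after a grading normalization). This amounts to a compatibility between the morphisms $S\conv S^*\to C_\mu$ and $C_\mu\to S^*\conv S$ on the one hand and the R-matrix $\rmat{S,S^*}$ on the other; these relations do not hold in $\Cw$ itself (since $C_\mu\not\simeq \one$ there) and only become available after localization. My strategy would be to invoke Schur's lemma in the graded setting to reduce each identity to the non-vanishing of a single scalar, and then to detect that scalar either via a direct head/socle computation or by passing to the Grothendieck ring $K(\tCw)$, where $[\wdt C_\mu]$ is invertible and the relation $[\Phi(S)]\cdot[N]=1$ forces the composite to be a non-zero, hence invertible, scalar. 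The construction of the left dual is symmetric, with the roles of head and socle in $S\conv S^*$ and $S^*\conv S$ interchanged and a compensating twist $\wdt C_{\mu'}$ chosen to balance the weights.
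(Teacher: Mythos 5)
The paper itself gives no proof of this statement; it is quoted from \cite{Local}, which at the time of writing is only ``in preparation.'' So there is no in-paper argument to compare with, and your proposal must be judged on its own merits. The reduction to $M=\Phi(S)$ via the invertible objects $\wdt C_\mu$ is correct and standard, and the candidate $N\seteq q^{c}\,\Phi(S^*)\conv\wdt C_\mu$ is indeed the right kind of object to try. The first genuine gap is the claim that, for a suitable $\mu\in\wl^+$, the determinantal module $C_\mu$ appears as the head of $S\conv S^*$ and the socle of $S^*\conv S$. This is the crux of the whole theorem, not a corollary of the $R$-matrix machinery already in \cite{KKKO18,KKOP18}: for a general simple $S$, which need not be real, the head and socle of $S\conv S^*$ are not even known to be simple a priori, let alone determinantal, and producing the distinguished $\mu$ together with the canonical surjection $S\conv S^*\epito C_\mu$ and inclusion $C_\mu\monoto S^*\conv S$ is a substantial piece of work (essentially the main new input of \cite{Local}). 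Saying this ``should follow'' from the R-matrix analysis is a placeholder for the proof, not a proof.

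The second gap is in the verification of the triangle identities. You propose to detect the non-vanishing of the composite $\Phi(S)\to\Phi(S)\conv N\conv\Phi(S)\to\Phi(S)$ by passing to the Grothendieck ring and invoking the relation $[\Phi(S)]\cdot[N]=1$. That relation is false in general: one has $[\Phi(S)]\cdot[N]=q^{c}[\Phi(S\conv S^*)]\cdot[\wdt C_\mu]$ in $K(\tCw)$, and $S\conv S^*$ typically has composition factors other than $C_{-\mu}$; the relation $[\Phi(S)]\cdot[N]=1$ would force $\Phi(S)$ to be invertible, which is not true for an arbitrary simple object of $\tCw$. Moreover, even a correct identity in the Grothendieck ring cannot detect whether a specific \emph{morphism} is nonzero --- $K(\tCw)$ remembers classes of objects, not maps. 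The verification that the two composites in the adjunction are isomorphisms (a priori you only have nonzero maps, not isomorphisms, because $\Phi(S)\conv N$ need not be $\one$) has to be carried out at the level of homomorphism spaces, using the precise interaction between the maps $S\conv S^*\epito C_\mu$, $C_\mu\monoto S^*\conv S$, and the renormalized $R$-matrix. That compatibility is exactly what the cited reference proves, and it is missing here.
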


We denote by $\mathscr D(M)$  the right dual of a simple object $M$ in $\Cwt$ and 
 by $\mathscr D^{-1}(M)$  the left dual of  $M$.

\subsection{Cluster variables and exchange matrices} \label{subsec:cvarex}
Set $$I'\seteq  \set{i_k}{ k_+=l+1 }=\set{i_k}{k\in \Kfr} \subset I,$$ 
 where $k_+:=\min(\set{j}{k<j\le l,\;i_j=i_k} \cup \{l+1\})$. 
Let $\set{M_j}{j\in\K}$ be a   monoidal cluster   in $\mathcal C_w$.
 Recall that $l=\sharp\K$ and $n=\sharp\Kfr=\sharp I'$.
Taking a total order on $\K$ such that $j <j'$ for all $j \in \Kex$ and $j' \in \Kfr$,
we shall identify $\K$, $\Kex$ and $\Kfr$ with
$\{1,\ldots,l\}$, $\{1,\ldots,l-n\}$ and $\{l-n+1,\ldots,l\}$, respectively. 

Then the matrices $\Lambda=(\La(M_i,M_j))_{1\le i,j\le l}$ and 
the exchange matrix $\tB$ satisfy the following relation
(see \cite{BZ05}):
$$\Lambda \tB = 
\begin{bmatrix}-2 \id_{(l-n)\times(l-n)}  \\
0
\end{bmatrix}.$$

Let $D = (d_{t,j})_{t\in I', j \in \K }$ be an $n \times l$ matrix defined by
\eq \label{eq:weightmatrix}
\sum_{t\in I'}d_{t,j} \al_t = \wt(M_j) \quad \text{for all} \  j \in \K,\eneq
so that $\sum_{t \in I'} (D\boldb)_t \, \al_t = \wt(M(\boldb))$ for $\boldb \in \Z_{\ge 0}^J$.
Then we have 
$$
D \tB =0.
$$

Now let $\Lambda'$ be the $(l-n) \times l$-matrix obtained by taking the first $(l-n)$ rows of $\La$.
Then we have 
$$
\begin{bmatrix}
\La' \\
D
\end{bmatrix} \tB = 
\begin{bmatrix}-2 \id_{(l-n)\times(l-n)}  \\
0
\end{bmatrix}.$$

\begin{lemma} \label{lem:invertible}
The matrix $\begin{bmatrix}
\La' \\
D
\end{bmatrix}  $ is an invertible,  and hence $\tB =   \begin{bmatrix}
\La' \\
D
\end{bmatrix}^{-1}
 \begin{bmatrix}-2 \id_{(l-n)\times(l-n)}  \\
0
\end{bmatrix}.$ 

\end{lemma}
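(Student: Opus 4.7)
The plan is to reduce the invertibility of $P\seteq\begin{bmatrix}\La'\\ D\end{bmatrix}$ to the claim that $D$ has full row rank $n$, and then verify the latter using the quantum Laurent phenomenon. For the reduction, I would test the left kernel: suppose a row vector $v=(v_1,v_2)\in\Q^{l-n}\oplus\Q^n$ satisfies $vP=0$. Right-multiplying by $\tB$ and using the identity $P\tB=\begin{bmatrix}-2\,\id_{(l-n)\times(l-n)}\\ 0\end{bmatrix}$ established just above the lemma, one obtains $-2v_1=0$, so $v_1=0$, and the remaining relation collapses to $v_2 D=0$. Hence the $l\times l$ matrix $P$ is invertible if and only if the rows of $D$ are linearly independent, that is, if and only if $\rk D=n$.

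To verify $\rk D = n$, I would show the weights $\{\wt(M_j)\}_{j\in\K}$ $\Q$-span the $n$-dimensional space $\bigoplus_{t\in I'}\Q\al_t$. Every $\wt(M_j)$ already lies in this subspace because $M_j\in\Cw$. For the reverse inclusion, the quantum Laurent phenomenon (Lemma~\ref{lem:LP}) implies that every element of $K(\Cw)\simeq\Anw_{\Z[q^{\pm 1}]}$ is a $\Z[q^{\pm 1}]$-Laurent polynomial in the $[M_j]$'s, so every weight of any element of $K(\Cw)$ is a $\Z$-linear combination of $\{\wt(M_j)\}$. It therefore suffices to exhibit weights in $K(\Cw)$ that $\Q$-span $\bigoplus_{t\in I'}\Q\al_t$.

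For this I would invoke the cuspidal modules $S_k$ from Section~\ref{subsec:initial seeds}, whose weights are $\wt(S_k) = -\beta_k = -w_{<k}\al_{i_k}$. For $t\in I'$, let $k(t)$ denote the first index with $i_{k(t)}=t$. Because each $s_{i_{k'}}$ with $i_{k'}\ne t$ fixes the coefficient of $\al_t$, an easy induction on $k$ shows $\beta_{k(t)} = \al_t + \sum_{k'<k(t)} c_{k'}\al_{i_{k'}}$ for some integers $c_{k'}$. Hence $\{\beta_{k(t)}\}_{t\in I'}$ is a $\Q$-basis of $\bigoplus_{t\in I'}\Q\al_t$ in triangular form, so $\rk D = n$ and $P$ is invertible. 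The formula $\tB = P^{-1}\begin{bmatrix}-2\,\id\\ 0\end{bmatrix}$ then follows from the defining relation. The main subtlety is that the $M_j$ for a general monoidal cluster need not coincide with the cuspidal $S_k$; the Laurent phenomenon is precisely the bridge that transfers the weight-span information from the PBW side to the cluster side.
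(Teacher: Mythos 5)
Your reduction of the invertibility of $\begin{bmatrix}\La'\\ D\end{bmatrix}$ to the statement $\rk D = n$ matches the paper's, and so does the reformulation of the latter as the claim that $\{\wt(M_j)\}_{j\in\K}$ spans $\soplus_{t\in I'}\Q\al_t$. Where you diverge is in how the spanning statement is transferred from the initial data to an arbitrary monoidal cluster. The paper verifies it directly for the initial seed $\shs_{\bold i}$ and then observes that $\Span_\Q\set{\wt(M_j)}{j\in\K}$ is invariant under a single mutation $\mu_k$ (this follows from the exchange relation: $\wt(M_k')=-\wt(M_k)+\sum_{b_{ik}>0}b_{ik}\wt(M_i)$), so the spanning property propagates to every reachable seed. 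You instead invoke the quantum Laurent phenomenon: since every homogeneous element of $K(\Cw)$ expands in the quantum torus of the given cluster, its weight lies in the $\Z$-span of $\{\wt(M_j)\}$; applying this to the cuspidal classes $[S_k]$, whose weights $-\beta_k$ include the triangular family $\{\beta_{k(t)}\}_{t\in I'}$, forces the span to be all of $\soplus_{t\in I'}\Q\al_t$. Both arguments are correct. The paper's is lighter-weight, needing only the mutation rule for weights; yours is less computational and, rather elegantly, avoids having to show invariance under mutation at all, at the cost of pulling in the Laurent phenomenon as a black box (which, to be fair, is already available from Berenstein--Zelevinsky and is used extensively elsewhere in the paper).
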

\begin{proof}
Set $\Ker \tB := \set{v \in \Q^l }{v \tB=0}$.  Then the row vectors of $\La'$ form a $\Q$-linearly independent  set of cardinality $l-n$, and  the $\Q$-vector subspace of $\Q^l$ spanned by them meets $\Ker \tB$ trivially. 
Since the row vectors of  $D$  belong to $\Ker \tB$, it is enough to show that the column rank of $D$ is equal to $n$.
By the definition of $D$, it is equivalent to saying  that the set $\set{\wt (M_j)}{j \in  J} $ generates the space $\soplus_{i\in I'} \Q\al_i$.
It can be checked easily in the case of the 
initial seed attached to a reduced expression of $w$.  Since the subspace
$$\Span_\Q \set{\wt (M_j)}{j \in  J} \subset \soplus_{i\in I'} \Q\al_i$$
is invariant under any mutation, we conclude that $\set{\wt (M_j)}{j \in  J} $ generates $\soplus_{i\in I'} \Q\al_i$ for any seed.
\end{proof}

\subsection{Dominance order}
Let $ \mcluster =\set{M_j}{j \in \K}$ be a  monoidal cluster   in $\mathcal C_w$and $\shs$ the monoidal seed determined by $\mcluster$.  
We define the order 
 $\succeq_{\shs}$ on $\Z_{\ge 0}^\K$ as follows:
$$
\boldb \succeq_{\shs} \boldb' 
$$
if and only if
\begin{enumerate}
\item $ \wt(M(\boldb)) = \wt(M(\boldb'))$, 
\item $ \La(M(\boldb), M_k) \ge \Lambda(M(\boldb'),M_k) \ \text{for all} \ k \in \Kex$.
\end{enumerate}
Note that   condition (1) implies that 
$ \La(M(\boldb), M_k) = \Lambda(M(\boldb'),M_k) \ \text{for all} \ k \in \Kfr$, because $\La(M(\boldb),M_k)=(w \varpi_{i_k}+\varpi_{i_k}, \wt(M(\boldb)) )$ 
depends only on $\wt(M(\boldb))$ (see, \cite[Theorem 4.12]{KKOP18}).  

Note that $\boldb +\bolda \succeq_{\shs} \boldb'+\bolda$ for any $\boldb, \boldb',\bolda \in \Z_{\ge 0}^\K$ with $\boldb \succeq_{\shs} \boldb'$.
Hence we can extend the relation to the one on $\Z^\K$ by
$$\text{$\boldb \succeq_{\shs} \boldb'$ if $\boldb +\bolda \succeq_{\shs} \boldb'+\bolda$ for some $\bolda \in \Z_{\ge0}^J$ 
such that $\boldb+\bolda,\;\boldb'+\bolda\in\Z_{\ge0}^\K$. }$$

Recall that $\tLa(X,Y)=\dfrac{1}{2}(\La(X,Y)+(\wt(X),\wt(Y))) \in \Z$  for $X,Y \in R \gmod$. 
Hence $\boldb\succeq_{\shs} \boldb'$ implies that $\La(M(\boldb),M_k)-\La( M(\boldb'),M_k) \in 2 \Z_{\ge 0}$ for all $k\in \Kex$.

Thus we have
$$\boldb \succeq_{\shs} \boldb'  \qquad \text{if and only if} \qquad D(\boldb-\boldb')=0 \ \text{and} \ -\La'(\boldb-\boldb') \in 2 \Z_{\ge 0}^{\Kex},$$
where $D$  and $\La'$  are  the matrices given in subsection \ref{subsec:cvarex}.

\begin{prop}
 We have 
$$\boldb \succeq_{\shs} \boldb'  \qquad \text{if and only if} \qquad  \boldb-\boldb' = \tB v \ \text{for some } \ v \in \Z_{\ge 0}^{\Kex}. $$
In particular, the relation $\succeq_{\shs}$ is an order on $\Z^{\K}$.
\end{prop}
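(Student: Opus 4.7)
The plan is to combine the two ingredients displayed just before the statement: the characterization that $\boldb \succeq_{\shs} \boldb'$ holds if and only if $D(\boldb-\boldb')=0$ and $-\La'(\boldb-\boldb') \in 2\Z_{\ge 0}^{\Kex}$, together with the matrix identity
\[
\begin{bmatrix}\La' \\ D\end{bmatrix} \tB \;=\; \begin{bmatrix}-2\,\id_{(l-n)\times(l-n)} \\ 0\end{bmatrix},
\]
and the invertibility of $\begin{bmatrix}\La' \\ D\end{bmatrix}$ established in Lemma~\ref{lem:invertible}.

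The ``if'' direction should be immediate: assuming $\boldb - \boldb' = \tB v$ with $v \in \Z_{\ge 0}^{\Kex}$, I would right-multiply the identity above by $v$ to obtain $D(\boldb-\boldb')=0$ and $-\La'(\boldb-\boldb') = 2v \in 2\Z_{\ge 0}^{\Kex}$, so the criterion is satisfied.

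For the ``only if'' direction, I would set $w \seteq \boldb - \boldb' \in \Z^\K$ and assume $Dw = 0$ and $-\La' w \in 2\Z_{\ge 0}^{\Kex}$. The identity $\La'\tB = -2\,\id_{(l-n)\times(l-n)}$ shows that $\tB$ has full column rank $l-n$, while $D\tB = 0$ places its columns inside $\ker D \subset \Q^\K$; since $D$ has rank $n$ (by the proof of Lemma~\ref{lem:invertible}), this kernel has dimension $l-n$, so the columns of $\tB$ form a $\Q$-basis of $\ker D$. Hence the hypothesis $Dw=0$ determines a unique $v \in \Q^{\Kex}$ with $w = \tB v$, and applying $\La'$ gives $v = -\tfrac{1}{2}\La' w$, which lies in $\Z_{\ge 0}^{\Kex}$ by the remaining hypothesis.

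For the final assertion that $\succeq_{\shs}$ is an order on $\Z^\K$, reflexivity will correspond to $v=0$, transitivity will follow by adding the witnessing vectors, and antisymmetry will use that if $\tB v = -\tB v'$ with $v, v' \in \Z_{\ge 0}^{\Kex}$, then $\tB(v+v')=0$ and the full column rank of $\tB$ forces $v=v'=0$. I do not foresee any substantive obstacle here; the essential geometric content has already been packaged into Lemma~\ref{lem:invertible}, and what remains is bookkeeping.
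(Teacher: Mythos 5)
Your argument is correct and is essentially the paper's proof: both directions reduce to the displayed matrix identity together with the invertibility of $\begin{bmatrix}\La' \\ D\end{bmatrix}$ from Lemma~\ref{lem:invertible}, which the paper uses in one stroke by writing $\tB = \begin{bmatrix}\La' \\ D\end{bmatrix}^{-1}\begin{bmatrix}-2\,\id \\ 0\end{bmatrix}$. Your ``only if'' step merely unpacks the same fact (columns of $\tB$ span $\ker D$) rather than inverting the block matrix directly, and your verification of the order axioms is a routine addendum the paper leaves implicit.
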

\begin{proof}
We have
\eqn 
\boldb \succeq_{\shs} \boldb'
\qquad \text{if and only if} \qquad 
\begin{bmatrix}
\La' \\ D
\end{bmatrix} ( \boldb-\boldb') =\begin{bmatrix}
-2v \\0
\end{bmatrix}
\quad
\text{for some}  \ v  \in \Z_{\ge 0}^{\Kex}.
\eneqn
By Lemma \ref{lem:invertible}, it is equivalent to saying that
\eqn
\boldb-\boldb' = \tB v \
 \text{for some} \ v  \in \Z_{\ge 0}^{\Kex},
\eneqn
as desired. 
\end{proof}

\begin{remark}
 The order $\succeq_{\shs}$ is the same as the one in \cite[Definition 3.1.1]{Qin17}.  
\end{remark}

\subsection{Heads and Socles}

Let $ \mcluster  =\set{M_j}{j \in \K }$ be a monoidal cluster in $\mathcal C_w$, 
 $\seed$ the monoidal seed determined by $\mcluster$.  
Then any simple module $X \in \mathcal C_w$
satisfies the following relation in the Grothendieck ring $K(\mathcal C_w)$
 by Lemma~\ref{lem:LP}: 
\eq 
&&[X \conv  M(\bolda)] =\sum_{k\in K}q^{c_k} [M(\boldb(k))]
\label{eq:Laurent}
\eneq
for a family $\{\boldb(k)\}_{k\in K}$ in $\Z_{\ge 0}^{\K}$,
a family $\{c_k\}_{k\in K}$ of integers and $\bolda \in \Z_{\ge 0}^{\K}$
such that
$\bolda_j=0$ for $j \in \Kfr$.

\Lemma Let $X$ be a simple module in $\Cw$, and
assume the relation \eqref{eq:Laurent}.
Then we have
\eq 
&&[M(\bolda)\conv X] =\sum_{k\in K}q^{c'_k} [M(\boldb(k))]
\eneq
where  $c'_k=c_k+\La(M(\boldb(k)),M(\bolda))$. 
\enlemma
\Proof
We have
\eqn
[M(\bolda)]\cdot[X]\cdot[M(\bolda)]
&&=\sum_{k\in K}q^{c_k} [M(\bolda)]\cdot[M(\boldb(k))]\\*
&&=\sum_{k\in K}q^{c_k}q^{\La(M(\boldb(k)),M(\bolda))} 
[M(\boldb(k))]\cdot[M(\bolda)].
\eneqn
Hence we obtain the desired result.
\QED

 Recall that $X \conv M(\bolda)$ has a simple head $X\hconv M(\bolda)$
and a simple socle $X\sconv M(\bolda)$, since $M(\bolda)$ is a real simple module.

\begin{lemma}\label{lem:Mab}
Let $X$ be a simple module in $\Cw$, and
assume  relation \eqref{eq:Laurent}.
Then  we have the following properties.
\bnum
\item There exists a unique $k_0\in K$ such that
$X\hconv M(\bolda)\simeq q^{c_{k_0}}M(\boldb(k_0))$.
Moreover we have
$\boldb(k_0) \succ_{\shs}\boldb(k)$ for any $k\in K\setminus\{k_0\}$.
\item There exists a unique $k_1\in K$ such that
$X\sconv M(\bolda)\simeq q^{c_{k_1}}M(\boldb(k_1))$.
Moreover we have
$\boldb(k) \succ_{\shs}\boldb(k_1)$ for any  $k\in K\setminus\{k_1\}$.
\item If $k_0=k_1$, then $K=\{k_0\}$.
\item If ${\rm char} (\cor)=0$, then
$c_{k_0}< c_k$ for  any  $k\in K\setminus\{k_0\}$ and
$c_k <c_{k_1}$ for any  $k\in K\setminus\{k_1\}$.
\end{enumerate}
\end{lemma}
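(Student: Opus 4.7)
\emph{Plan of proof.} My plan is to settle the four parts in order, with the bulk of the work in the dominance-order claims in (i) and (ii).

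\emph{Existence and uniqueness for (i), (ii).} Since $M(\bolda)$ is a real simple module, $X \conv M(\bolda)$ has a simple head $H \seteq X \hconv M(\bolda)$ and a simple socle $S \seteq X \sconv M(\bolda)$, and each appears as a composition factor with multiplicity exactly one (a standard consequence of the R-matrix machinery of \cite{KKKO18}). Because the $[M(\boldb(k))]$, $k \in K$, are pairwise distinct self-dual simple classes, this forces $H \simeq q^{c_{k_0}}M(\boldb(k_0))$ and $S \simeq q^{c_{k_1}}M(\boldb(k_1))$ for uniquely determined $k_0, k_1 \in K$.

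\emph{Dominance in (i), (ii).} To prove $\boldb(k_0) \succ_{\shs} \boldb(k)$ for $k \neq k_0$, I fix $j \in \Kex$ and denote by $\bolda + \mathbf{e}_j$ the tuple obtained from $\bolda$ by increasing its $j$-th entry by $1$. Since $M_j$ strongly commutes with every cluster variable, one has $M(\bolda) \conv M_j \simeq q^{*} M(\bolda + \mathbf{e}_j)$ and similarly $M(\boldb(k)) \conv M_j \simeq q^{*} M(\boldb(k) + \mathbf{e}_j)$, up to grading shifts. Right-multiplying \eqref{eq: expansion1} by $[M_j]$ and matching with the expansion furnished by Lemma~\ref{lem:LP} applied to $X \conv M(\bolda + \mathbf{e}_j)$, the linear independence of $\{[M(\boldb)]\}$ identifies the head of $X \conv M(\bolda + \mathbf{e}_j)$ with $q^{*}(H \hconv M_j)$, tracking the head-exponent as $\boldb(k_0) \mapsto \boldb(k_0) + \mathbf{e}_j$. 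This head-stability, combined with the matrix identity $\Lambda \tB = \begin{bmatrix} -2\, \mathrm{id} \\ 0 \end{bmatrix}$ from Lemma~\ref{lem:invertible} and the characterization of $\succeq_{\shs}$ preceding the lemma, is equivalent to the pointwise inequality $\La(M(\boldb(k_0)), M_j) \ge \La(M(\boldb(k)), M_j)$, i.e.\ $\boldb(k_0) - \boldb(k) \in \tB(\Z_{\ge 0}^{\Kex})$. Distinctness of the $\boldb(k)$'s then upgrades $\succeq_{\shs}$ to the strict $\succ_{\shs}$. Part (ii) is obtained by a dual argument, using the lemma immediately preceding the current one to relate $[X \conv M(\bolda)]$ to $[M(\bolda) \conv X]$ and working with the socle instead of the head.

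\emph{Parts (iii) and (iv).} For (iii), if $k_0 = k_1$ then $H \simeq S$ up to grading shift; since both occur with multiplicity one as composition factors, $X \conv M(\bolda)$ must itself be simple, so $K = \{k_0\}$. For (iv), under $\mathrm{char}(\cor) = 0$ the self-dual simples form the upper global basis $\mathbf{B}^{\up}$, hence each $[M(\boldb(k))]$ is bar-invariant. Applying the bar involution to \eqref{eq: expansion1} and using $(X \conv M(\bolda))^* \simeq q^{(\wt X, \wt M(\bolda))} M(\bolda) \conv X$ together with the preceding lemma yields a reciprocal system on the $\{c_k\}$ which forces the head to occupy the minimum $q$-exponent and the socle the maximum; the strict inequalities $c_{k_0} < c_k < c_{k_1}$ follow.

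\emph{Main obstacle.} The principal difficulty lies in the dominance step of (i), (ii). Once uniqueness of head and socle is in hand, the task is to translate stability of the head under right-multiplication by cluster variables $M_j$ into a pointwise $\La$-inequality across all composition factors; this requires carefully coupling the functoriality of head formation with the compatibility of $\Lambda$ and $\tB$. The characteristic-zero strict inequalities in (iv) are then a consequence of the same bookkeeping tightened by bar-invariance.
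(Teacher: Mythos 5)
Your plan is broadly aligned with the paper's proof, but the dominance step in (i) contains a genuine gap. After correctly observing that the head of $X\conv M(\bolda+\bolde_j)$ is $M(\boldb(k_0)+\bolde_j)$, the inequality that \cite[Theorem 4.1.1]{KKKO18} actually gives you is
$$\La\bl M(\boldb(k_0)+\bolde_j),\, M(\bolda+\bolde_j)\br \ \ge\ \La\bl M(\boldb(k)+\bolde_j),\, M(\bolda+\bolde_j)\br,$$
which, after cancelling the common terms $\La(M_j,M(\bolda))$ and $\La(M_j,M_j)$, reads
$$\La(M(\boldb(k_0)),M_j)-\La(M(\boldb(k)),M_j)\ \ge\ \La(M(\boldb(k)),M(\bolda))-\La(M(\boldb(k_0)),M(\bolda)).$$
The right-hand side is \emph{strictly negative} (by \cite[Corollary 4.1.2]{KKKO18}, which you implicitly rely on for uniqueness of $k_0$), so a single application of head-stability at $\bolda+\bolde_j$ does \emph{not} yield $\La(M(\boldb(k_0)),M_j)\ge\La(M(\boldb(k)),M_j)$. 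Your assertion that head-stability ``is equivalent to the pointwise inequality'' via the matrix identity $\La\tB$ is therefore unsubstantiated; the matrix identity merely translates the pointwise inequality into the $\succeq_{\shs}$ language, it does not produce the inequality. The missing idea is exactly the paper's scaling argument: one uses the exactness of $-\conv M(\boldc)$ to obtain head-stability for an \emph{arbitrary} $\boldc\in\Z_{\ge0}^\K$, then replaces $\boldc$ by $t\boldc$ and lets $t\to\infty$ so that the (linear in $\boldc$) left-hand side $\La(M(\boldb(k_0)),M(\boldc))-\La(M(\boldb(k)),M(\boldc))$ dominates the fixed right-hand side, forcing it to be nonnegative. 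Without this limiting step your argument for $\boldb(k_0)\succ_{\shs}\boldb(k)$ does not close, and the same issue propagates to (ii).

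The other parts are fine. Your (iii) argument (head $\simeq$ socle with multiplicity one forces simplicity) is correct, and coincides in spirit with the paper's deduction of (iii) from (i) and (ii). For (iv), you take a genuinely different route: instead of directly citing \cite[Theorem 4.2.1, Corollary 4.2.2]{KKKO18} as the paper does, you apply the bar involution to \eqref{eq:Laurent}, combine with the preceding lemma (which expands $[M(\bolda)\conv X]$), and match coefficients to obtain $c_k=-\tfrac12\bl(\wt X,\wt M(\bolda))+\La(M(\boldb(k)),M(\bolda))\br$; the strict $q$-inequalities then follow from the strict $\La$-inequalities established in (i) and (ii). This is a valid and arguably more transparent derivation of (iv), though it makes (iv) dependent on first completing (i), whereas the paper's citation of Theorem 4.2.1 is independent. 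It also tacitly assumes the composition factors $M(\boldb(k))$ occur with multiplicity one so that the coefficient-matching is clean; you should note that the family $\{\boldb(k)\}$ may have repeats and handle that bookkeeping.
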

\begin{proof}
The existence of $k_0$ and $k_1$ is obvious.

For an arbitrary $\boldc \in \Z_{\ge 0}^{\K}$,  by multiplying $M(\boldc)$  to \eqref{eq:Laurent},  we obtain that 
\eqn
&&q^{-\tLa(M(\bolda), M(\boldc))} [X \conv M(\bolda+\boldc)]
=\sum_{k\in K}q^{c_k-\tLa(M(\boldb_j), M(\boldc))}  [M(\boldb(k)+\boldc)].
\eneqn
Since $M(\bolda+\boldc)$ is a real simple module, $X \conv M(\bolda+\boldc)$ has a simple head and a simple socle. 
Because  the functor $-\conv M(\boldc)$ is exact, the module $q^{c_{k_0}-\tLa(M(\boldb(k_0)), M(\boldc))} M(\boldb(k_0)+\boldc)$  
is a quotient of $q^{-\tLa(M(\bolda), M(\boldc))} X\conv M(\bolda+\boldc)$ and hence is  equal to 
the head of $q^{-\tLa(M(\bolda), M(\boldc))} X\conv M(\bolda+\boldc)$. 


Then it follows by  \cite[Theorem 4.1.1]{KKKO18} 
that
$$\La(M(\boldb(k_0)+\boldc), M(\bolda+\boldc))  
\ge   \La(M(\boldb(k)+\boldc), M(\bolda+\boldc))\qt{for any $k\in K$,} $$
and hence 
$$\La(M(\boldb(k_0)), M(\boldc))- \La(M(\boldb(k)), M(\boldc)) 
\ge  \La(M(\boldb(k),M(\bolda)) -\La(M(\boldb(k_0)),M(\bolda))).$$

Because $\boldc$ is arbitrary, we may replace $\boldc$ by $t\, \boldc$ to obtain
$$t \bl\La(M(\boldb(k_0)), M(\boldc))- \La(M(\boldb(k)), M(\boldc))\br \ge
\La(M(\boldb(k),M(\bolda)) -\La(M(\boldb(k_0)),M(\bolda)))$$
for any  $\boldc \in \Z_{\ge 0}^{\K}, \ t\in \Z_{\ge 1}$.
It follows that  $$\La(M(\boldb(k_0)), M(\boldc))\ge \La(M(\boldb(k)), M(\boldc)) \quad \text{for any}  \ \boldc \in \Z_{\ge 0}^{\K}.
$$

On the other hand, we  know that 
$\La(M(\boldb(k_0)),M(\bolda)) > \La(M(\boldb(k)),M(\bolda))$ for any
$k\in K\setminus\{k_0\}$ by \cite[Corollary 4.1.2]{KKKO18}.
Hence we obtain $\boldb(k_0) \succ_{\shs}\boldb(k)$ 
for $k\in K\setminus\{k_0\}$.

\smallskip
\noi
(ii) can be proved similarly.

\smallskip
\noi
(iii) follows from (i) and (ii).

\smallskip\noi
(iv) is Theorem 4.2.1 and Corollary 4.2.2 in \cite{KKKO18}.
\end{proof}

\Lemma\label{lem:expRL}
Let $X$ be a simple module in $\Cw$.
Then, we have
\bnum
\item There exists $\bolda$, $\boldb\in\Z_{\ge0}^\K$ such that $\bolda_k=0$ for $k\in\Kfr$ and $X\hconv M(\bolda)\simeq M(\boldb)$ up to a grading shift.
\item If 
$\bolda$, $\bolda'$, $\boldb$, $\boldb'\in\Z_{\ge0}^\K$ 
satisfy
$X\hconv M(\bolda)\simeq M(\boldb)$ and $X\hconv M(\bolda')\simeq M(\boldb')$
up to grading shifts, then one has
$\boldb-\bolda=\boldb'-\bolda'$.
\ee
The similar statements hold for $\sconv$.
\enlemma
\Proof
Since the statements for $\sconv$ can be proved similarly to the one for $\hconv$,
we only give the proof of the statement
on $\hconv$.

\smallskip\noi
(i) follows from Lemma~\ref{lem:Mab}.

\smallskip\noi
(ii) we have
\eqn
M(\bolda'+\boldb) \simeq M(\boldb) \hconv M(\bolda') 
&&\simeq (X \hconv M(\bolda)) \hconv M(\bolda') \\
&&\simeq \hd(X\conv M(\bolda') \conv M(\bolda)) 
\simeq M(\boldb'+\bolda),
\eneqn
so that we have $\bolda'+\boldb=\boldb'+\bolda$, as desired.
\QED

\begin{definition}
For any simple $X$ in $\mathcal C_w$  and any   monoidal cluster   $ \mcluster=\set{M_j}{j\in \K}$, take $\bolda$, $\bolda'$, $\boldb$, $\boldb'\in\Z_{\ge0}^\K$
such that $X\hconv M(\bolda)\simeq M(\boldb)$ and $X\sconv M(\bolda')
\simeq M(\boldb')$ up to a grading shift.
Then, we set
$$\expR(X):=\boldb-\bolda, \qquad  \text{and} \qquad \expL(X):=\boldb'-\bolda'.$$
These can be extended to $\expR$ and $\expL$  from the set 
$\Irrsd(\tCw)$
of the isomorphism classes of  self-dual  simples in $\Cwt$ 
to the set $\Z^\K$ by 
$$\expR(X\conv M(\boldc)) :=  \expR(X)+ \boldc, \quad \text{and} \quad
\expL(X\conv M(\boldc)) :=  \expL(X)+ \boldc
$$
for any $\boldc \in \Z^\K$ with $\boldc_j=0$ for $j \in \Kex$.
\end{definition}

Note that $\expR$ and $\expL$ are
 well-defined  by Lemma~\ref{lem:expRL}.

\begin{remark}\bnum
\item 
The map $\bold{deg}$ in \cite[Definition 3.1.4]{Qin17} corresponds to the map  $\expR$. 
\item
Since $X\sconv M(\bolda')\simeq M(\bolda')\hconv X$ up to a grading shift,
 $\expL$  is as important as  $\expR$. 
\ee
\end{remark}

\Lemma
Let $X$ be a simple module in $\mathcal C_w$.
If $\bolda, \boldb \in \Z_{\ge 0}^\K$ satisfy
$\boldb-\bolda =
\expR(X)$ \ro
respectively, $\boldb-\bolda =\expL(X)$\rf,
then we have
$$X\hconv M(\bolda)\simeq M(\boldb)
\qt{\ro respectively,  $M(\bolda)\hconv X\simeq M(\boldb )$\rf}$$
up to a grading shift.
\enlemma

\begin{proof}
Let us take $\bolda'$ and $\boldb'$,
such that $X \hconv  M(\bolda')\simeq M(\boldb')$.
(We omit the grading shifts.) 
Then one has $\boldb-\bolda=\expR(X)=\boldb'-\bolda'$.
Hence we have a sequence of morphisms whose composition is an epimorphism:
\eqn
&&X\conv M(\bolda)\conv M(\boldb')\simeq
X\conv M(\bolda+\boldb')
=X\conv M(\bolda'+\boldb)
\simeq X\conv M(\bolda') \conv M(\boldb) \\
&&\hs{40ex}\epito (X \hconv M(\bolda'))\conv M(\boldb) 
\simeq M(\boldb')\conv M(\boldb).
\eneqn
On the other hand, $X\conv M(\bolda)\conv M(\boldb')$
has a simple head.
Hence we obtain
$$\bl X\hconv M(\bolda)\br\hconv M(\boldb')\simeq  M(\boldb')\conv M(\boldb)
\simeq M(\boldb)\hconv M(\boldb').$$
Then \cite[Corollary 3.7]{KKKO15} implies
$X\hconv M(\bolda)\simeq M(\boldb)$. The proof for $\expL[]$ is similar. 
\end{proof}

 The following lemma is obvious.
\Lemma
For  any  simple module $X$ in $\Cw$ and  any  $\bolda\in\Z_{\ge0}^\K$,
we have
$$\expR(X\hconv M(\bolda))=\expR(X)+\bolda,\quad
\expL( M(\bolda)\hconv X)=\bolda+\expL(X).$$
\enlemma

\begin{lemma}
The maps 
$\expR$ and  $\expL$ are injective.
\end{lemma}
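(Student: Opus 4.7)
The plan is to reduce the injectivity of $\expR$ to a cancellation argument, using the characterization of heads given in the preceding lemma together with the cancellation property of real simple modules from \cite[Corollary 3.7]{KKKO15} (which is already invoked in the proof of that lemma).

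First I would reduce to the subcategory $\Cw$. Given two self-dual simples $A, B \in \tCw$ with $\expR(A) = \expR(B)$, one may write $A \simeq X \conv M(\boldc_1)$ and $B \simeq Y \conv M(\boldc_2)$ (up to a grading shift) with $X, Y$ self-dual simple modules in $\Cw$ and $\boldc_1, \boldc_2 \in \Z^\K$ supported on $\Kfr$; this uses that frozen variables become invertible in $\tCw$. By choosing $\boldc \in \Z^\K$ supported on $\Kfr$ with $\boldc + \boldc_1$ and $\boldc + \boldc_2$ both in $\Z_{\ge 0}^\K$ and replacing $A, B$ by $A \conv M(\boldc), B \conv M(\boldc)$ (which have the same $\expR$ up to the common shift $\boldc$), I may assume from the outset that $X$ and $Y$ are self-dual simples in $\Cw$ with $\expR(X) = \expR(Y) =: \boldeq$.

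Next I would choose $\bolda \in \Z_{\ge 0}^\K$ large enough that $\boldb := \bolda + \boldeq \in \Z_{\ge 0}^\K$. The preceding lemma then says
\[ X \hconv M(\bolda) \simeq M(\boldb) \simeq Y \hconv M(\bolda) \]
up to grading shifts. Since $M(\bolda)$ is real simple (being a convolution of a mutually strongly commuting family of real simples), the cancellation result \cite[Corollary 3.7]{KKKO15}, applied exactly as in the proof of the previous lemma, yields $X \simeq Y$ up to grading shift; self-duality of $X$ and $Y$ then forces $X \simeq Y$ in $\Irrsd(\Cw)$.

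For $\expL$ the argument is strictly analogous, replacing heads by socles throughout; alternatively, one may use that $M(\bolda) \hconv X \simeq X \sconv M(\bolda)$ up to a grading shift (a consequence of the self-duality of $M(\bolda)$) to reduce the socle statement to the head cancellation already invoked. The only real obstacle is the careful bookkeeping of grading shifts and the passage between $\Cw$ and $\tCw$; the structural content is entirely supplied by the preceding lemma characterizing $M(\boldb)$ as the unique possible head of $X \conv M(\bolda)$ (for $\bolda$ large) and by the cancellation of a real simple convolution factor.
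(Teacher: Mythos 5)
Your proposal is correct and follows essentially the same route as the paper: reduce to $\Cw$, use the preceding lemma to write $X\hconv M(\bolda)\simeq M(\boldb)\simeq Y\hconv M(\bolda)$, cancel the real simple $M(\bolda)$ via \cite[Corollary 3.7]{KKKO15}, and argue dually for $\expL$. The paper's proof is terser and leaves the reduction from $\tCw$ to $\Cw$ (via tensoring with an invertible frozen monomial) implicit, which you spell out explicitly.
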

\begin{proof}
Let $X, Y$ be self-dual  simples in $\mathcal C_w$ such that
$\expR(X)=\expR(Y)$.
Take $\bolda, \boldb\in \Z_{\ge 0}^J$ such that $\expR(X)=\boldb-\bolda$.
Then we have
$X \hconv M(\bolda) \simeq M(\boldb)$ and 
$Y \hconv M(\bolda) \simeq M(\boldb)$ up to a grading shift. 
We conclude that $X \simeq Y$    by \cite[Corollary 3.7]{KKKO15} and hence $\expR$ is injective. Similarly one can show that $\expL$ is injective.
\end{proof}

These maps are closely related with the duality.
\begin{lemma} \label{lem:expanddual}
Let $(L, R)$  be a dual pair of simples in $\Cwt$. 
Then we have
\eqn 
\expR(L)+\expL(R)  =0.
\eneqn
\end{lemma}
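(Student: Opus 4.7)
The plan is to move from the defining relation of $\expR(L)$ to a corresponding head relation for $R$ by applying the unit/counit of the dual pair $(L,R)$. First, using the Laurent phenomenon (Lemma~\ref{lem:LP}) extended to $\tCw$, I would choose $\bolda, \boldb \in \Z_{\ge 0}^{\K}$ realizing $L \hconv M(\bolda) \simeq M(\boldb)$, so that $\expR(L) = \boldb - \bolda$, and fix an epimorphism $\phi \colon L \conv M(\bolda) \epito M(\boldb)$. The goal is to prove that $M(\boldb) \hconv R \simeq M(\bolda)$ up to grading shift, since this forces $\expL(R) = \bolda - \boldb$ and hence $\expR(L) + \expL(R) = 0$.

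To produce such a relation, I would form the morphism
$$\psi \seteq (R \conv \phi) \circ (\eta \conv M(\bolda)) \colon M(\bolda) \longrightarrow R \conv M(\boldb),$$
where $\eta \colon \one \to R \conv L$ is the unit of the adjunction. The triangle identity $(\eps \conv L) \circ (L \conv \eta) = \id_L$ together with the naturality of $\eps$ gives $(\eps \conv M(\boldb)) \circ (L \conv \psi) = \phi$, so $\psi \neq 0$. Because $M(\bolda)$ is simple, $\psi$ is a monomorphism and embeds $M(\bolda)$ into $R \conv M(\boldb)$.

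Since $M(\boldb)$ is real simple (being a cluster monomial), the basic results on $R$-matrices from \cite{KKKO18} guarantee that $R \conv M(\boldb)$ has a simple socle, so $\soc(R \conv M(\boldb)) \simeq M(\bolda)$ up to a grading shift; the same realness identifies this socle with $\hd(M(\boldb) \conv R)$ via the image of $\rmat{M(\boldb), R}$. Combining these gives $M(\boldb) \hconv R \simeq M(\bolda)$, completing the argument.

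The one subtlety I expect to require care is the consistency of the exponent formulas after passing from $\Cw$ to the localization $\tCw$: although neither $L$ nor $R$ need lie in $\Cw$, writing any simple $X \in \tCw$ as $X_0 \conv M(\boldc)$ with $X_0 \in \Cw$ simple and $\boldc$ supported on $\Kfr$, and using that the frozen convolution factors are central real simples, one checks that a head relation $X \hconv M(\bolda) \simeq M(\boldb)$ valid in $\tCw$ still produces the correct extended value of $\expR(X)$, and similarly for $\expL$. This bookkeeping is the only mild obstacle; the categorical heart of the argument—the construction of $\psi$ and the passage from socles on one side to heads on the other via realness of $M(\boldb)$—is short and formal.
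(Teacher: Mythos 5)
Your proposal is essentially correct and follows the same structural outline as the paper's proof: both arguments use the unit $\eta\colon\one\to R\conv L$ to embed $M(\bolda)$ (up to frozen twisting) as a simple submodule of a convolution with $R$, identify that image with the simple socle since $M(\boldb)$ is real, and read off $\expL(R)=\bolda-\boldb$. There are, however, two substantive points of comparison.

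The first is a genuine simplification on your side. The paper shows that the relevant composition of a monomorphism (from $\eta$) and an epimorphism (from $\phi$) is nonzero by invoking \cite[Lemma 3.1.5]{KKKO18}. You instead use the full adjunction: the triangle identity yields $(\eps\conv M(\boldb))\circ(L\conv\psi)=\phi$, forcing $\psi\ne0$ directly. This is cleaner and more self-contained, and it is a legitimate alternative to the paper's argument.

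The second point is a gap, one you partly but not fully flag. The socle and R-matrix results you invoke---that $R\conv M(\boldb)$ has a simple socle because $M(\boldb)$ is real simple, and that $\hd(M(\boldb)\conv R)\simeq\soc(R\conv M(\boldb))$ via $\rmat{M(\boldb),R}$---are stated and proved in \cite{KKKO15,KKKO18} for $R\gmod$ (hence $\Cw$), but $R$ is only a simple object of $\tCw$ and need not lie in $\Cw$. The paper circumvents this by replacing $R$ and $L$ with $M(\boldc)\conv R$ and $L\conv M(\boldc')$ for frozen $\boldc,\boldc'\in\Z_{\ge0}^{\Kfr}$, chosen large enough that (a) these are simple objects of $\Cw$ and (b) the a priori $\tCw$-morphisms arising from $\eta$ and $\phi$ lift to morphisms in $\Cw$; only then does it apply the socle machinery. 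The subtlety you acknowledge (compatibility of the extended $\expR,\expL$ with head/socle relations) is genuinely present, but it is secondary: the primary issue is that the head/socle/R-matrix theory itself is only available after the reduction to $\Cw$, or after a separate argument that $M(\boldd)\conv(-)$ for central invertible $M(\boldd)$ is an exact autoequivalence transporting that theory to $\tCw$. Without one of these steps spelled out, the appeal to a simple socle of $R\conv M(\boldb)$ is not yet justified.

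Both gaps are fixable, and once the reduction to $\Cw$ is carried out your argument matches the paper's, with the triangle-identity computation offering a pleasant shortcut past \cite[Lemma 3.1.5]{KKKO18}.
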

\begin{proof}
 Write $L \hconv M(\bolda) \simeq  M(\boldb) $ 
for $\bolda$, $\boldb\in \Z_{\ge0}^\K$ 
so that $\expR(L) =\boldb-\bolda$.
Since $R$ and $L$ are simple in $\Cwt$, there exist
$\boldc,\boldc' \in \Z_{\ge 0}^{\Kfr}$ such that
$M(\boldc) \conv R $
and $L\conv M(\boldc')$ are simple objects in  $\mathcal C_w$.

Then $\one\to R  \conv   L$ induces a monomorphism
\eq
&&M(\boldc)  \conv M(\boldc')
 \monoto \bl M(\boldc) \conv R\br \conv 
\bl L\conv M(\boldc')\br\qt{in $\tCw$,}\label{eq:ac1}\eneq
and $L\conv M(\bolda)\epito M(\boldb)$
 induces an epimorphism
\eq&&\bl L\conv M(\boldc')\br \conv M(\bolda)\simeq
L\conv M(\bolda)\conv M(\boldc')\epito  M(\boldb)\conv M(\boldc')
\qt{in $\tCw$.}\label{eq:ac2}
\eneq

From the definition of morphisms in $\Cwt$ (see \cite{Local}), 
for sufficiently large $\boldc$ and $\boldc'$, we may assume that
the morphism \eqref{eq:ac1} and the composition of morphisms in
\eqref{eq:ac2}
are morphisms in $\Cw$.
Then we have a chain of morphisms
\eqn \bl M(\boldc)  \conv M(\boldc')\br\conv M(\bolda)
&& \monoto \bl M(\boldc) \conv R\br \conv 
\bl L\conv M(\boldc')\br \conv M(\bolda)\\ 
&&\hs{5ex}\epito \bl M(\boldc) \conv R \br\conv \bl M(\boldb)\conv M(\boldc')\br \eneqn
in $\Cw$.
  By \cite[Lemma 3.1.5]{KKKO18},
the composition is non-zero. Since 
$\bl M(\boldc)  \conv M(\boldc')\br\conv M(\bolda)$ is a simple  module, 
it is a monomorphism.
Therefore, $\bl M(\boldc) \conv R \br\sconv
 \bl M(\boldb)\conv M(\boldc')\br\simeq 
\bl M(\boldc)  \conv M(\boldc')\br\conv M(\bolda)$.
Hence we obtain
$$\boldc+\expL(R)=\expL\bl M(\boldc) \conv R\br=(\boldc+\boldc'+\bolda)-(\boldb+\boldc')
=\boldc+\bolda-\boldb.$$
Thus $\expL(R)= \bolda - \boldb$, as desired. 
\end{proof}

\begin{prop} \label{prop:expRforcusp}
Let $ \mcluster_{\bold i}=\set{M_k}{1\le k\le l}$  and  $\seed_{\bold i}$   be the  monoidal cluster  and the monoidal seed   associated with a  reduced expression $\underline{w} = s_{i_1 }\cdots s_{i_l}$, where $\bold i = (i_1,\ldots, i_l)$   \ro see, subsection~\ref{subsec:initial seeds}\rf. 

For $\boldc \in \Z_{\ge 0}^{l}$, set
\eqn
P_{\bold i}(\boldc):=\hd(S_l^{\boldc_l} \conv \cdots \conv S_1^{\boldc_1}).
\eneqn
Then we have
\eqn
\expR[ \seed_{\bold i}](P_{\bold i}(\boldc)) = (\boldc_1-\boldc_{1_+}, \ldots, \boldc_l- \boldc_{l_+}),
 \eneqn
where
$k_+:=\min(\set{j}{k<j\le l,\;i_j=i_k} \cup \{l+1\})$, and 
$\boldc_{l+1}:=0$.
\end{prop}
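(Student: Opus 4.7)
The plan is to reduce the computation of $\expR[\seed_{\mathbf i}](P_{\mathbf i}(\boldc))$ to the individual values $\expR[\seed_{\mathbf i}](S_k)$ via additivity along the normal sequence supplied by Proposition~\ref{prop:normal sequence}, and to compute each $\expR[\seed_{\mathbf i}](S_k)$ from a ``successor'' identity between $S_k$ and the cluster variables $M_k$ and $M_{k_-}$. Throughout, set $k_-\seteq \max\bl\set{j}{1\le j<k,\ i_j=i_k}\cup\{0\}\br$ with conventions $M_0\seteq\one$ and $e_0\seteq 0$.

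\textbf{Step 1.} I first establish $M_k\simeq S_k\hconv M_{k_-}$ up to grading shift. If $k_1<k_2<\cdots<k_p=k$ denotes the $i_k$-chain of indices up to $k$, then subsection~\ref{subsec:initial seeds} gives $M_k\simeq \hd(S_{k_p}\conv\cdots\conv S_{k_1})$ and $M_{k_-}\simeq\hd(S_{k_{p-1}}\conv\cdots\conv S_{k_1})$. Convolving the surjection $S_{k_{p-1}}\conv\cdots\conv S_{k_1}\epito M_{k_-}$ on the left by $S_k$ realizes $S_k\conv M_{k_-}$ as a quotient of $S_k\conv S_{k_{p-1}}\conv\cdots\conv S_{k_1}$; since the latter has simple head $M_k$, so does the former. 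Applying the formula $\expR(X\hconv M(\bolda))=\expR(X)+\bolda$ to $X=S_k$ and $\bolda=e_{k_-}$, together with $\expR(M_k)=e_k$, yields
\[
\expR[\seed_{\mathbf i}](S_k)\;=\;e_k-e_{k_-}.
\]

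\textbf{Step 2.} Next I would prove the additivity $\expR[\seed_{\mathbf i}](P_{\mathbf i}(\boldc))=\sum_{k=1}^l \boldc_k\,\expR[\seed_{\mathbf i}](S_k)$ by induction on $\sum_k\boldc_k$, peeling off one cuspidal factor at a time. At each step, Lemma~\ref{lem:expRL} applied to the residual tail $Y$ produces $\bolda,\boldb\in\Z_{\ge 0}^l$ with $Y\hconv M(\bolda)\simeq M(\boldb)$, and Proposition~\ref{prop:normal sequence} ensures that the augmented sequence $(S_l^{\circ\boldc_l},\ldots,S_1^{\circ\boldc_1},M_1^{\circ\bolda_1},\ldots,M_l^{\circ\bolda_l})$ is normal. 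Normality makes $\hconv$ associative on this sequence, so writing $S_k$ for the leading cuspidal factor one obtains
\[
(S_k\hconv Y)\hconv M(\bolda)\;\simeq\;S_k\hconv \bl Y\hconv M(\bolda)\br\;\simeq\;S_k\hconv M(\boldb)\;\simeq\;M(\expR(S_k)+\boldb),
\]
whence $\expR(S_k\hconv Y)=\expR(S_k)+(\boldb-\bolda)=\expR(S_k)+\expR(Y)$. Iterating produces the desired sum.

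\textbf{Step 3 and main obstacle.} Combining Steps 1--2 and reindexing via $k\mapsto k_+$,
\[
\sum_{k=1}^l \boldc_k(e_k-e_{k_-})
=\sum_{k=1}^l \boldc_k e_k-\sum_{j=1}^l \boldc_{j_+}e_j
=\sum_{k=1}^l (\boldc_k-\boldc_{k_+})e_k,
\]
where the boundary terms $k_-=0$ and $j_+=l+1$ vanish by the conventions $e_0=0$ and $\boldc_{l+1}=0$. The main obstacle is the additivity in Step 2: the basic formula $\expR(X\hconv M(\bolda))=\expR(X)+\bolda$ applies only when the right factor is already a cluster monomial, so extending it to a general simple $Y$ requires absorbing $Y$ into $M(\boldb)$ via Lemma~\ref{lem:expRL} while simultaneously sitting inside a common normal sequence. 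Proposition~\ref{prop:normal sequence}, tailored to mixed sequences of cuspidal modules and cluster variables at arbitrary multiplicities, provides exactly the normality needed to make $\hconv$ associative at each inductive step.
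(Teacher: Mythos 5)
Your argument is a re-packaging of the paper's proof rather than a genuinely different one, but the packaging is worth comparing. You first isolate $\expR[\seed_{\bold i}](S_k)=\bolde_k-\bolde_{k_-}$ from the relation $S_k\hconv M_{k_-}\simeq M_k$, and then deduce the formula via an additivity of $\expR[\seed_{\bold i}]$ along the cuspidal product, proved by induction using the normality from Proposition~\ref{prop:normal sequence}. The paper instead runs a single explicit computation $P_{\bold i}(\boldc)\hconv M(\boldc_+)\simeq M(\boldc)$ by successively absorbing each factor $M_{k_-}^{\circ\boldc_k}$ into $S_k^{\circ\boldc_k}$, using the same identity $S_k\hconv M_{k_-}\simeq M_k$ (which the paper invokes without derivation, and which you re-derive from the description of $M_k$ as the head of a chain of cuspidals) and the same normality. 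Your additivity step is essentially the paper's re-association phrased as a standalone principle; it is cleaner to state, but it is also where care is needed. Two points should be spelled out. First, the associativity $(S_k\hconv Y)\hconv M(\bolda)\simeq S_k\hconv\bl Y\hconv M(\bolda)\br$ follows because $P_{\bold i}(\boldc)\conv M(\bolda)$ and $S_k\conv M(\boldb)$ are both surjective images of $S_k\conv(\text{tail cuspidals})\conv M(\bolda)$, which has a simple head by Proposition~\ref{prop:normal sequence}, so all three share that head. Second, the last step $S_k\hconv M(\boldb)\simeq M(\boldb+\bolde_k-\bolde_{k_-})$ requires $\boldb_{k_-}\ge1$ when $k_->0$; this is arranged by enlarging $\bolda$ (and correspondingly $\boldb$) in the $k_-$ coordinate, which is harmless since Proposition~\ref{prop:normal sequence} holds for any $\bolda$ and Lemma~\ref{lem:expRL}~(ii) makes $\expR$ independent of the choice. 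With these details filled in, your proof is correct and reaches the same conclusion.
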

\begin{proof}
By Proposition \ref{prop:normal sequence}, we know that 
\eqn 
(S_l^{\circ \boldc_l}, S_{l-1}^{\circ \boldc_{l-1}}, \ldots, S_1^{\circ \boldc_1}, M_1^{\circ {\boldc_1}_+}, M_2^{\circ {\boldc_2}_+}, \ldots, M_l^{\circ {\boldc_l}_+} )
\eneqn
is a normal sequence.

Set $\boldc_+ \seteq \sum_{k=1}^l {\boldc_k}_+ \bolde_k$,  where $\set{\bolde_j}{j \in \K}$ is the
 basis of  $\Z^J$ such that $\boldc=\sum_{j\in \K} \boldc_j \bolde_j$. 
Then we have $\boldc_+ = \sum_{j=1}^l \boldc_j \bolde_{j_-}$,  where $j_-=\max (\set{1\le k < j}{ i_k=i_j})\cup\{0\})$ and $\bolde_0:=0$.
Hence $M(\boldc_+) \simeq M_{1_-}^{\circ \boldc_1} \conv \cdots \conv M_{l_-}^{\circ \boldc_l} $.
Because we have
\eqn
S_k \hconv M_{k_-} \simeq M_k \quad \text{for} \ 1\le k\le l,
\eneqn
it follows that 
\eqn
&&P_{\bold i}(\boldc) \hconv M(\boldc_+)
\simeq\hd( S_l^{\circ \boldc_l} \conv S_{l-1}^{\circ \boldc_{l-1}} \conv \cdots\conv S_2^{\circ \boldc_2} \conv S_1^{\circ \boldc_1} \conv M(\boldc_+)) \\
&&\simeq\hd( S_l^{\circ \boldc_l} \conv S_{l-1}^{\circ \boldc_{l-1}} \conv \cdots\conv S_2^{\circ \boldc_2}\conv S_1^{\circ \boldc_1} \conv M_{1_-}^{\circ \boldc_1}\conv M_{2_-}^{\circ \boldc_2}\conv \cdots\conv M_{l_-}^{\circ \boldc_l} ) \\
&&\simeq\hd( (S_l^{\circ \boldc_l} \conv S_{l-1}^{\circ \boldc_{l-1}} \conv \cdots \conv  S_2^{\circ \boldc_2} )\conv (S_1^{\circ \boldc_1} \hconv M_{1_-}^{\circ \boldc_1})\conv (M_{2_-}^{\circ \boldc_2}\conv \cdots\conv M_{l_-}^{\circ \boldc_l} )) \\
&&\simeq\hd( (S_l^{\circ \boldc_l} \conv S_{l-1}^{\circ \boldc_{l-1}} \conv \cdots \conv  S_2^{\circ \boldc_2} )\conv M_1^{\circ \boldc_1} \conv (M_{2_-}^{\circ \boldc_2}\conv \cdots\conv M_{l_-}^{\circ \boldc_l} )) \\
&&\simeq\hd( (S_l^{\circ \boldc_l} \conv S_{l-1}^{\circ \boldc_{l-1}} \conv \cdots \conv  S_2^{\circ \boldc_2} ) \conv (M_{2_-}^{\circ \boldc_2}\conv \cdots\conv M_{l_-}^{\circ \boldc_l} ) \conv M_1^{\circ \boldc_1}) \\
&&\simeq \cdots \\
&&\simeq\hd(M_l^{\circ \boldc_l} \conv \cdots \conv M_1^{\circ \boldc_1})
\simeq M(\boldc),
\eneqn
as desired.
\end{proof}

\begin{corollary} \label{cor:surj1}
The map $\expR[\shs_{\bold i}] : \Irrsd(\Cwt) \to \Z^\K$ is  bijective.
\end{corollary}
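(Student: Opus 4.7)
The plan is as follows. Since injectivity of $\expR$ has already been established, I only need to prove surjectivity. The idea is to combine Proposition~\ref{prop:expRforcusp}, which computes $\expR[\seed_{\bold i}]$ explicitly on the dual PBW simples, with the invertibility of the frozen modules in $\tCw$ from Section~\ref{subsec:Loc}.

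First I would examine the $\Z$-linear map $\Theta \col \Z^l \to \Z^l$ given by $\Theta(\boldc)_k \seteq \boldc_k - \boldc_{k_+}$ (with the convention $\boldc_{l+1} \seteq 0$); by Proposition~\ref{prop:expRforcusp} this is precisely the restriction of $\expR[\seed_{\bold i}]$ to $\set{P_{\bold i}(\boldc)}{\boldc \in \Z_{\ge 0}^l}$. Since $k < k_+$ for every $k \in \{1,\ldots,l\}$, the map $\Theta$ is unitriangular with respect to the natural order on $\K$, and therefore it is a bijection of $\Z^l$; explicitly, $\Theta^{-1}(\bolda)_k = \bolda_k + \bolda_{k_+} + \bolda_{(k_+)_+} + \cdots$, the chain terminating once the successor reaches $l+1$. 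The key combinatorial observation is that each such chain starting at $k$ ends at the unique frozen index $j \in \Kfr$ satisfying $i_j = i_k$; consequently, adding an integer $N$ to every frozen coordinate of $\bolda$ adds exactly $N$ to every coordinate of $\Theta^{-1}(\bolda)$.

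Given any $\bolda \in \Z^\K$, I would choose $N \gg 0$ so that $\boldc \seteq \Theta^{-1}(\bolda + N \one_\Kfr) \in \Z_{\ge 0}^l$, where $\one_\Kfr \seteq \sum_{j \in \Kfr} e_j$. Then $P_{\bold i}(\boldc)$ is a self-dual simple of $\Cw$ with $\expR[\seed_{\bold i}](P_{\bold i}(\boldc)) = \bolda + N \one_\Kfr$. For $j \in \Kfr$ one has $w_{\le j}\varpi_{i_j} = w\varpi_{i_j}$, so $M_j = C_{i_j}$, which is invertible in $\tCw$. Hence $M(-N \one_\Kfr)$, defined as the convolution product of the inverses $M_j^{\circ(-N)}$ for $j \in \Kfr$, is a self-dual simple object of $\tCw$; by the extension formula defining $\expR$ on $\Irrsd(\tCw)$, one obtains $\expR(\Phi(P_{\bold i}(\boldc)) \conv M(-N \one_\Kfr)) = (\bolda + N \one_\Kfr) - N \one_\Kfr = \bolda$, proving surjectivity.

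The only point requiring care is that the extension of $\expR$ to $\Irrsd(\tCw)$ is consistent with convolution by the inverses of the frozen modules. This is essentially built into the localization construction of \cite{Local}, so no new analytic input is required; the rest of the argument is the purely combinatorial inversion of $\Theta$, which is entirely transparent.
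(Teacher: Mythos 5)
Your proof is correct and follows essentially the same route as the paper's: compute $\expR[\shs_{\bold i}]$ on the dual PBW simples via Proposition~\ref{prop:expRforcusp}, invert the map $\boldc \mapsto (\boldc_k - \boldc_{k_+})_k$, shift on the frozen indices to guarantee nonnegativity, and then cancel that shift using invertible frozen objects in $\tCw$. The only cosmetic difference is that you package the inversion as a unitriangular linear map $\Theta$ and use a single uniform shift $N\one_{\Kfr}$, whereas the paper picks the modified vector $\bolda'$ coordinatewise on $\Kfr$; both are the identical combinatorial argument.
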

\begin{proof}
Since $\expR[{\shs_{\bold i}}]$ is injective, we will show that it is surjective.

For each $1 \le k\le l+1 $ and $ m \ge 0$, define inductively $k_{(0)}:=k$, $k_{(m)}:=(k_{(m-1)})_+$. 
Let $m_k$ be the non-negative integer such that $k_{(m_k)}\le l$ and $(k_{( m_k+1)})_+=l+1$. 
For a given $\bolda \in \Z^{l}$, take an $\bolda' \in \Z^{l}$ such that 
\label{eq:akp}
 \eqn &&\bolda_k=\bolda'_k \quad \text{for $k \in \Kex$, and}  \\
&&\boldc_k\seteq \bolda'_k + \bolda'_{k_+} + \bolda'_{k_{(2)}}+\cdots 
+\bolda'_{k_{(m_k-1)}}+\bolda'_{k_{(m_k)}} \ge 0\quad 
\text{for $1 \le k \le l$.} \eneqn 
 Since $k_{(m_k)} \in \Kfr$, one can find such an $\bolda'$  
by taking sufficiently large $\bolda'_{k_{(m_k)}}$.
 Then we have
$\boldc_k-\boldc_{k_+}=\bolda'_k$. 
Hence, the above proposition implies that
$\expR[{\shs_{\bold i}}](P(\boldc)) = \bolda'$.
Thus, $\bolda-\bolda'\in\Z^{\Kfr}$ and 
$$\expR[{\shs_{\bold i}}](P(\boldc)\conv M(\bolda -\bolda')) = \bolda'+(\bolda-\bolda')=\bolda,$$
as desired.
\end{proof}

\subsection{Tropical transformations}
Let us fix a   monoidal cluster   $ \mcluster = \set{M_j}{ j \in \K}$ in $\mathcal C_w$ 
 and let $\seed$ be the associated monoidal seed. 
For $i \in \Kex$  and  $j\in \Kfr$, we set $b_{ij}:=-b_{ji}$.
Let us denote by  $\{\bolde_j\}_{j \in \K}$ the natural basis of $\Z^J$.

Fix $k \in \Kex$ and we denote by 
$\mu_k(\shs)=\set{M'_j}{ j \in \K}$ the mutation of $\shs$
 in the direction $k$.
Recall that  $M'_j=M_j$ for $j\not=k$ and 
there exists an exact sequence (ignoring the gradings)
$$0 \to U \to M_k \conv M_k' \to V \to 0,$$
where
\eqn
V = M\left (\sum_{b_{ki}> 0} b_{ki}\bolde_i\right), \quad \text{and} \quad U = M\left(\sum_{b_{ik} >0} b_{ik}\bolde_i\right).
\eneqn

Set $ [a]_+  :=\max\{a,0\}$ for $a \in\Z$.

For $\boldg \in \Z^{\K}$, we define
\eqn \pR_{\mu_k(\shs),\shs} (\boldg) :=\boldg', \quad \text{where}
\eneqn
\eqn
\boldg'_i = 
\begin{cases}
-\boldg_k &\text{if} \ i =k, \\
\boldg_i +[b_{ki}]_+ \boldg_k&\text{if} \ i \neq k, \ \boldg_k \ge 0, \\
\boldg_i +[b_{ik}]_+ \boldg_k&\text{if} \ i \neq k, \ \boldg_k \le 0,
\end{cases} 
\eneqn
 and
\eqn\pL_{\mu_k(\shs),\shs} (\boldg) := \boldg''   \quad \text{where} \eneqn
\eqn
\boldg''_i = 
\begin{cases}
-\boldg_k &\text{if} \ i =k, \\
\boldg_i +[b_{ik}]_+ \boldg_k&\text{if} \ i \neq k, \ \boldg_k \ge 0, \\
\boldg_i +[b_{ki}]_+ \boldg_k&\text{if} \ i \neq k, \ \boldg_k \le 0.
\end{cases} 
\eneqn

Setting $\mu_k(\shs)\seteq \shs'$, the maps
$\pR_{\shs',\shs}$ and $\pR_{\shs,\shs'}$
(respectively, $\pL_{\shs',\shs}$ and $\pL_{\shs,\shs'}$) 
are inverse to each other.

\begin{theorem}
For any simple $X \in \Cwt$, we have
\eqn
&\pR_{\mu_k(\shs),\shs} (\expR(X)) = \expR[{\mu_k(\shs)}](X) \quad \text{and} \quad
&\pL_{\mu_k(\shs),\shs} (\expL[{\shs}](X)) = 
\expL[{\mu_k(\shs)}](X).
\eneqn
\end{theorem}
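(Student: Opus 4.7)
The plan is to first establish the $\phi^R$ identity by case analysis on $g_k\seteq\expR[\shs](X)_k$, and then to deduce the $\phi^L$ identity from $\phi^R$ using the duality on $\tCw$ provided by Theorem~\ref{thm:dual}. Once the $\phi^R$ identity is proved for all simples $X\in\tCw$, the $\phi^L$ identity follows by applying $\phi^R$ to the left dual $\mathscr D^{-1}(X)$, invoking the pairing $\expR(\mathscr D^{-1}(X))=-\expL[\shs](X)$ from Lemma~\ref{lem:expanddual} (applied to the dual pair $(\mathscr D^{-1}(X),X)$), and using the elementary identity $\phi^L_{\mu_k(\shs),\shs}(\boldg)=-\phi^R_{\mu_k(\shs),\shs}(-\boldg)$, which is checked directly from the defining formulas.

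For the $\phi^R$ identity, Lemma~\ref{lem:expRL} reduces the task to exhibiting, for each simple $X\in\tCw$ with $\boldg\seteq\expR[\shs](X)$, a pair $(\bolda'',\boldb'')\in\Z_{\geq 0}^\K\times\Z_{\geq 0}^\K$ with $\boldb''-\bolda''=\phi^R_{\mu_k(\shs),\shs}(\boldg)$ and $X\hconv M_{\mu_k(\mathscr C)}(\bolda'')\simeq M_{\mu_k(\mathscr C)}(\boldb'')$. The building block for both sign cases is an iterated exchange: from Theorem~\ref{thm:moncat}(ii) one has the short exact sequences $0\to U\to M_k\conv M'_k\to V\to 0$, with head $V\seteq M_{\mathscr C}(\sum_{i\neq k}[b_{ki}]_+\bolde_i)$ and socle $U\seteq M_{\mathscr C}(\sum_{i\neq k}[b_{ik}]_+\bolde_i)$, and its dual $0\to V\to M'_k\conv M_k\to U\to 0$, whose head is $U$. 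Since $V$ and $U$ are convolutions of $M_i$ for $i\neq k$, they strongly commute with both $M_k$ and $M'_k$ (the latter by $\de(M'_k,M_i)=\delta_{ki}$). Peeling one innermost factor, quotienting by its simple head, and sliding the resulting $V$ (respectively $U$) outward yields, inductively for each $n\ge 0$, the surjections $M_k^{\circ n}\conv(M'_k)^{\circ n}\epito V^{\circ n}$ and $(M'_k)^{\circ n}\conv M_k^{\circ n}\epito U^{\circ n}$, and dually the socle injections $U^{\circ n}\hookrightarrow M_k^{\circ n}\conv(M'_k)^{\circ n}$ and $V^{\circ n}\hookrightarrow (M'_k)^{\circ n}\conv M_k^{\circ n}$.

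In the case $g_k\ge 0$, choose $\bolda,\boldb\in\Z_{\geq 0}^\K$ with $a_k=0$, $b_k=g_k$, and $X\hconv M_\mathscr C(\bolda)\simeq M_\mathscr C(\boldb)$, and set $\bolda''\seteq\bolda+g_k\bolde_k$, $\boldb''\seteq\boldb-g_k\bolde_k+g_k\sum_{i\neq k}[b_{ki}]_+\bolde_i$; a direct computation verifies $\boldb''-\bolda''=\phi^R_{\mu_k(\shs),\shs}(\boldg)$. Both $M_{\mu_k(\mathscr C)}(\bolda'')\simeq M_\mathscr C(\bolda)\conv(M'_k)^{\circ g_k}$ and $M_{\mu_k(\mathscr C)}(\boldb'')\simeq M_\mathscr C(\boldb-g_k\bolde_k)\conv V^{\circ g_k}$ are real simple modules, the former since $M_\mathscr C(\bolda)$ has no $M_k$-factor and $\de(M'_k,M_i)=\delta_{ki}$. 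Convolving the given surjection $X\conv M_\mathscr C(\bolda)\epito M_\mathscr C(\boldb)$ on the right with $(M'_k)^{\circ g_k}$ and composing with the iterated exchange $M_k^{\circ g_k}\conv(M'_k)^{\circ g_k}\epito V^{\circ g_k}$ produces a surjection $X\conv M_{\mu_k(\mathscr C)}(\bolda'')\epito M_{\mu_k(\mathscr C)}(\boldb'')$, and simplicity of the target identifies this as the simple head.

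The case $g_k\le 0$ is parallel in the combinatorics---choose $\bolda,\boldb$ with $a_k=|g_k|$, $b_k=0$, and set $\bolda''\seteq(\bolda-a_k\bolde_k)+a_k\sum_{i\neq k}[b_{ik}]_+\bolde_i$, $\boldb''\seteq\boldb+|g_k|\bolde_k$, again satisfying $\boldb''-\bolda''=\phi^R_{\mu_k(\shs),\shs}(\boldg)$---but it is the main obstacle of the proof, because no manifest surjection onto $M_{\mu_k(\mathscr C)}(\boldb'')$ is available. Instead, the socle injection $U^{\circ|g_k|}\hookrightarrow M_k^{\circ|g_k|}\conv(M'_k)^{\circ|g_k|}$ produces a composite $X\conv M_{\mu_k(\mathscr C)}(\bolda'')\simeq X\conv M_\mathscr C(\bolda-a_k\bolde_k)\conv U^{\circ|g_k|}\hookrightarrow X\conv M_\mathscr C(\bolda)\conv(M'_k)^{\circ|g_k|}\epito M_\mathscr C(\boldb)\conv(M'_k)^{\circ|g_k|}=M_{\mu_k(\mathscr C)}(\boldb'')$, whose nonvanishing---equivalently, that the socle embedding does not factor entirely through the radical of the middle term---is the delicate step. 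I expect to close this via the dominance-order analysis of Lemma~\ref{lem:Mab}: the class $[M_{\mu_k(\mathscr C)}(\boldb'')]$ should be the unique $\succeq_{\mu_k(\shs)}$-maximal term in the $\mu_k(\mathscr C)$-Laurent expansion of $[X]\cdot[M_{\mu_k(\mathscr C)}(\bolda'')]$, forcing $M_{\mu_k(\mathscr C)}(\boldb'')$ to appear as a composition factor of $X\conv M_{\mu_k(\mathscr C)}(\bolda'')$ and, by Lemma~\ref{lem:Mab}, as its simple head.
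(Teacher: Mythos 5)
Your reduction of the $\pL$ identity to the $\pR$ identity via left duals — applying Lemma~\ref{lem:expanddual} to the dual pair $(\mathscr D^{-1}(X),X)$ together with the easily checked formula $\pL_{\mu_k(\shs),\shs}(\boldg)=-\pR_{\mu_k(\shs),\shs}(-\boldg)$ — is valid, and is a genuinely different organization from the paper, which proves $\pL$ directly and dismisses $\pR$ as ``similar.'' Your treatment of the case $\boldg_k\ge 0$ is also correct. But the case $\boldg_k\le 0$ contains a genuine gap that your sketch does not close, and that case is the real content of the theorem.

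The difficulty is exactly what you flag: the composite
$X\conv M(\bar\bolda)\conv U^{\circ|\boldg_k|}\hookrightarrow X\conv M(\bolda)\conv(M'_k)^{\circ|\boldg_k|}\epito M(\boldb)\conv(M'_k)^{\circ|\boldg_k|}$
has no reason to be nonzero, and the appeal to Lemma~\ref{lem:Mab} to force nonvanishing is circular. That lemma says the simple head of $X\conv M'(\bolda'')$ corresponds to the unique $\succ_{\mu_k(\shs)}$-maximal exponent appearing in the $\mu_k(\mcluster)$-Laurent expansion of $[X]\cdot[M'(\bolda'')]$, but it gives no independent way to locate that exponent: to know that $\boldb''$ is the maximal term you must either already know the head, or actually carry out the substitution of the exchange relation $[M_k][M'_k]=[U]+[V]$ into the $\shs$-expansion of $[X]\cdot[M(\bolda)]$ and analyze the resulting exponents, none of which your sketch does. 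The ``expected'' conclusion is precisely what needs to be proved.

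The idea that closes the gap is a cancellation argument (this is what the paper in fact uses, in the $\sconv$/$\pL$ form). Rather than showing a single composite is nonzero, compute the simple head of the larger module $X\conv M'(\bolda'')\conv M_k^{\circ|\boldg_k|}$ in two ways and then cancel the extra factor. Since $U^{\circ|\boldg_k|}$ strongly commutes with $M_k$, one has, up to grading shifts,
$X\conv M'(\bolda'')\conv M_k^{\circ|\boldg_k|}\simeq X\conv M(\bolda)\conv U^{\circ|\boldg_k|}$,
whose head is $(X\hconv M(\bolda))\conv U^{\circ|\boldg_k|}\simeq M(\boldb)\conv U^{\circ|\boldg_k|}$, which in turn equals $M(\boldb)\conv\bigl((M'_k)^{\circ|\boldg_k|}\hconv M_k^{\circ|\boldg_k|}\bigr)\simeq M'(\boldb'')\hconv M_k^{\circ|\boldg_k|}$. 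On the other hand, since $M'(\bolda'')\conv M_k^{\circ|\boldg_k|}$ is real simple, the same head is $\bigl(X\hconv M'(\bolda'')\bigr)\hconv M_k^{\circ|\boldg_k|}$. Comparing the two and cancelling the real simple $M_k^{\circ|\boldg_k|}$ by \cite[Corollary 3.7]{KKKO15} yields $X\hconv M'(\bolda'')\simeq M'(\boldb'')$, which is the assertion. This is exactly the double-socle manipulation the paper writes out for $\pL$; without it, the case $\boldg_k\le 0$ does not go through.
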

\begin{proof}
Since the proofs are similar, we will show only the second equality.
In the course of the proof, we ignore the grading shifts. 

 Take $\bolda$, $\boldb\in \Z_{\ge0}^\K$
such that $\boldg\seteq\expL[{\shs}](X) =\boldb -\bolda$.
Then we have $X \sconv M(\bolda) = M(\boldb)$.
We may assume that either $\bolda_k=0$ or $\boldb_k=0$.

\medskip\noi
(Case $\bolda_k=0$)\hs{1.5ex} We have $\boldg_k=\boldb_k\ge0$.

Set $\bar \boldb = \boldb-\boldb_k\bolde_k$. Then we have
\eqn 
&&X \sconv M'(\bolda+\boldb_k\bolde_k)\simeq\soc(X \conv M(\bolda) \conv M'(\boldb_k \bolde_k))  \\
&&\simeq\soc(M(\boldb) \conv M'(\boldb_k \bolde_k))\simeq
M(\bar \boldb) \sconv ( M(\boldb_k \bolde_k)\sconv M'(\boldb_k \bolde_k)) \\
&&\simeq M(\bar \boldb) \sconv U^{\conv \boldb_k}\simeq
M(\bar \boldb) \conv M(\sum_{b_{ik}>0} b_{ik}  \boldb_k\bolde_i )
\simeq M'(\bar \boldb+\sum_{b_{ik}>0} b_{ik}  \boldb_k\bolde_i ).
\eneqn
Hence we have
\eqn
(\expL[{\mu_k(\shs)}](X))_i &&=(\bar \boldb+\sum_{b_{ik}>0} b_{ik}  \boldb_k\bolde_i 
 -(\bolda+\boldb_k\bolde_k))_i \\
&&= \begin{cases}
 -\boldb_k=-\boldg_k &\text{if} \quad i=k, \\
 \boldb_i-\bolda_i+ [b_{ik}]_+ \boldb_k = \boldg_i +[b_{ik}]_+\boldg_k &\text{if} \quad i \neq k. 
 \end{cases}
\eneqn

\medskip\noi
(Case $\boldb_k=0$)\hs{1.5ex} We have $\boldg_k=-\bolda_k\le0$.
 Set $\bar \bolda = \bolda-\bolda_k\bolde_k$.
Since we have
\eqn 
&&\soc\bl X \conv M'(\bar \bolda)\conv V^{\conv \bolda_k}\br \sconv M(\bolda_k\bolde_k)
=\soc(X \conv M(\bolda) \conv V^{\conv \bolda_k}) 
=M(\boldb)\sconv  V^{\conv \bolda_k}  \\
&&=M(\boldb) \sconv (M'(\bolda_k\bolde_k)\sconv M(\bolda_k\bolde_k)) 
=\bl M'(\boldb) \sconv M'(\bolda_k\bolde_k)\br\sconv M(\bolda_k\bolde_k), 
\eneqn
we obtain that 
\eqn 
\soc\bl X \conv M'(\bar \bolda) \conv V^{\conv \bolda_k}\br =M'(\boldb) \sconv M'(\bolda_k\bolde_k)
=M'(\boldb+\bolda_k\bolde_k).
\eneqn
It follows that
\eqn
(\expL[{\mu_k(\shs)}](X))_i &&=(\boldb+\bolda_k\bolde_k -\bar \bolda -\sum_{b_{ki}>0} b_{ki}  \bolda_k\bolde_i)_i \\
&&= \begin{cases}
 \bolda_k=-\boldg_k &\text{if} \quad i=k, \\
  \boldb_i- \bolda_i-[b_{ki}]_+ \bolda_k = \boldg_i +[b_{ki}]_+\boldg_k &\text{if} \quad i \neq k, 
 \end{cases}
\eneqn
as desired.
\end{proof}

 Since  the maps $\pR_{\mu_k(\shs),\shs}$ and
$\pL_{\mu_k(\shs),\shs}$ are bijections on $\Z^\K$,
Corollary \ref{cor:surj1}  implies the following result.
\begin{corollary} \label{cor:parametrization}
The map $\expR$ is a bijection from the set of classes of self-dual simples in $\Cwt$ to the set $\Z^{\K}$ for any  monoidal  seed    $\shs$.
\end{corollary}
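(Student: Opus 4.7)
The plan is to bootstrap from the special case of an initial seed to an arbitrary monoidal seed, using the tropical transformation formula as the bridge. Injectivity of $\expR$ has already been established, so only surjectivity needs to be addressed.

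First, I would invoke Corollary~\ref{cor:surj1}, which gives that $\expR[\shs_{\bold i}]$ is bijective for the initial monoidal seed $\shs_{\bold i}$ associated with a reduced expression $\underline{w} = s_{i_1}\cdots s_{i_l}$. By the definition of a quantum cluster algebra structure on $\Z[q^{\pm 1/2}]\otimes_{\Z[q^{\pm 1}]}\Anw$ and the fact that $\Cw$ provides a monoidal categorification (Theorem~\ref{thm:moncat}), every monoidal seed $\shs$ is obtained from $\shs_{\bold i}$ by a finite sequence of mutations $\shs = \mu_{k_r}\cdots\mu_{k_1}(\shs_{\bold i})$. Thus it suffices to show that if $\expR[\shs]$ is bijective, then so is $\expR[\mu_k(\shs)]$ for any $k\in\Kex$.

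For this inductive step, I would apply the just-proved tropical transformation identity
\[
\expR[\mu_k(\shs)](X) \;=\; \pR_{\mu_k(\shs),\shs}\bigl(\expR[\shs](X)\bigr)
\qquad \text{for every simple } X \in \Cwt.
\]
In other words, $\expR[\mu_k(\shs)] = \pR_{\mu_k(\shs),\shs} \circ \expR[\shs]$ as maps from $\Irrsd(\tCw)$ to $\Z^\K$. Since $\pR_{\mu_k(\shs),\shs}$ is a bijection of $\Z^\K$ onto itself (its inverse being $\pR_{\shs,\mu_k(\shs)}$, as noted just before the theorem on tropical transformations), the bijectivity of $\expR[\shs]$ transfers directly to the bijectivity of $\expR[\mu_k(\shs)]$. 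An induction on the length of a mutation sequence connecting $\shs_{\bold i}$ to $\shs$ then yields the claim for arbitrary $\shs$.

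No step is a serious obstacle here: the heavy lifting was done in establishing surjectivity for the initial seed (Proposition~\ref{prop:expRforcusp} and Corollary~\ref{cor:surj1}) and in computing the tropical transformation formula. The present corollary is essentially a formal propagation of these two facts along the mutation graph; the only subtlety to keep in mind is that one must be sure that \emph{every} monoidal seed of $\Cw$ is indeed reachable from $\shs_{\bold i}$ by mutations, which is part of the cluster algebra structure on $\Anw$ and hence guaranteed by Theorem~\ref{thm:moncat}.
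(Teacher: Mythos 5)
Your proposal matches the paper's argument exactly: start from the bijectivity of $\expR[\shs_{\bold i}]$ established in Corollary~\ref{cor:surj1}, observe that the tropical transformation $\pR_{\mu_k(\shs),\shs}$ is a bijection on $\Z^\K$ satisfying $\expR[\mu_k(\shs)] = \pR_{\mu_k(\shs),\shs}\circ\expR[\shs]$, and propagate along mutations. The paper states this in a single sentence before the corollary; your write-up simply unpacks the same reasoning.
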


\begin{corollary} \label{cor:parametrizationL}
The map $\expL$ is a bijection from the set of isomorphism classes of self-dual simples in $\Cwt$ to the set $\Z^{\K}$ for any   monoidal  seed    $\shs$.
\end{corollary}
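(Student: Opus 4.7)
The plan is to combine bijectivity of $\expR$ (Corollary~\ref{cor:parametrization}) with the duality statement Theorem~\ref{thm:dual} and the compatibility formula of Lemma~\ref{lem:expanddual}. Injectivity of $\expL$ has already been established, so the whole content of the corollary is surjectivity of $\expL\cl \Irrsd(\tCw)\to \Z^\K$.

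To prove surjectivity, fix an arbitrary $\boldg\in\Z^\K$. By Corollary~\ref{cor:parametrization}, the map $\expR$ is a bijection onto $\Z^\K$, so there exists a self-dual simple object $L\in\tCw$ with $\expR(L)=-\boldg$. By Theorem~\ref{thm:dual}, $L$ admits a right dual $\mathscr{D}(L)$ in $\tCw$, which is again a simple object; let $R$ be its self-dual normalization (unique up to isomorphism via a grading shift). Then $(L,R)$ is a dual pair of simples in $\tCw$, and applying Lemma~\ref{lem:expanddual} yields
\[
\expL(R) \;=\; -\expR(L) \;=\; \boldg.
\]
This exhibits $\boldg$ as an image of $\expL$, which combined with the previously established injectivity gives the bijectivity of $\expL$ for the seed $\shs$. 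Since $\shs$ was arbitrary, the proof is complete.

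The main point on which the argument rests is the compatibility formula $\expR(L)+\expL(R)=0$ of Lemma~\ref{lem:expanddual}; once that is available, the corollary is an essentially formal consequence of the bijectivity of $\expR$ and the existence of dual pairs in $\tCw$. I do not anticipate any subtle obstacle here, as every ingredient has already been developed in the paper; the only mild point to keep in mind is that, because $\expR$ and $\expL$ are grading-invariant by construction (the definition goes through the relation $X\hconv M(\bolda)\simeq M(\boldb)$ up to a grading shift), we are free to replace the right dual $\mathscr{D}(L)$ by its self-dual representative without affecting the equality $\expL(R)=\boldg$.
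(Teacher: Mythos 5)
Your proposal is correct and follows essentially the same route as the paper: invoke surjectivity of $\expR$ (Corollary~\ref{cor:parametrization}), take the right dual from Theorem~\ref{thm:dual}, and apply the relation $\expR(L)+\expL(R)=0$ of Lemma~\ref{lem:expanddual} to conclude surjectivity of $\expL$.
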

\begin{proof}
It is enough to show that $\expL$ is surjective.
By  Theorem  \ref{thm:dual},  there is a map  $\mathscr D$  sending  a simple $M$ to its right dual  $\mathscr D(M)$. 
By  Lemma \ref{lem:expanddual}, for a self-dual simple $M$ in $\Cwt$, we have
$$\expL(\mathscr D(M)) =-\expR(M).$$
Since $\expR$ is surjective, so is $\expL$.
\end{proof}

Now the following is another consequence of Lemma \ref{lem:expanddual}.
\begin{corollary} \label{cor:dualformula}
Let $M$ be a self-dual simple in $\Cwt$. 
Then 
\eqn
(\expL)^{-1} (-\expR(M)) \simeq \mathscr D(M) \qtext
 \left(\expR\right)^{-1} (-\expL (M)) \simeq \mathscr D^{-1}(M) .
\eneqn
\end{corollary}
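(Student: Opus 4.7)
The plan is to read this off directly from Lemma~\ref{lem:expanddual}, using that both $\expR$ and $\expL$ have already been shown to be bijective on $\Irrsd(\tCw)$ in Corollaries~\ref{cor:parametrization} and \ref{cor:parametrizationL}. The ingredients are all in place; the corollary is essentially a packaging step.

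First, by Theorem~\ref{thm:dual} the right dual $\mathscr D(M)$ and the left dual $\mathscr D^{-1}(M)$ exist for any simple $M$ in $\tCw$. For a self-dual simple $M$, the pair $(M,\mathscr D(M))$ is by definition a dual pair (with $M$ the left dual of $\mathscr D(M)$), so Lemma~\ref{lem:expanddual} gives
\[
\expR(M) + \expL(\mathscr D(M)) = 0,
\]
i.e.\ $\expL(\mathscr D(M)) = -\expR(M)$. Since $\expL$ is a bijection from $\Irrsd(\tCw)$ to $\Z^\K$ by Corollary~\ref{cor:parametrizationL}, this identifies the self-dual simple $\mathscr D(M)$ (up to the grading shift fixed by self-duality) as $(\expL)^{-1}(-\expR(M))$, which is the first claim.

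Symmetrically, $(\mathscr D^{-1}(M),M)$ is a dual pair, so Lemma~\ref{lem:expanddual} yields
\[
\expR(\mathscr D^{-1}(M)) + \expL(M) = 0,
\]
and invoking the bijectivity of $\expR$ (Corollary~\ref{cor:parametrization}) identifies $\mathscr D^{-1}(M)$ with $(\expR)^{-1}(-\expL(M))$.

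The only subtle point worth flagging is that, strictly speaking, Lemma~\ref{lem:expanddual} and the maps $\expR,\expL$ are set up on self-dual representatives, whereas $\mathscr D(M)$ is only defined up to unique isomorphism; one should check (or note in passing) that the right and left duals of a self-dual simple are themselves self-dual up to a canonical grading shift, after which the injectivity of $\expL$ and $\expR$ pins down the isomorphism class uniquely. This is the one place where a little care is needed, but it is a routine consequence of $(-)^*$ being a contravariant monoidal auto-equivalence of $\tCw$ and so preserving adjunctions; beyond that, the corollary is formal.
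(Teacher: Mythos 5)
Your proof is correct and matches the paper's (implicitly given) argument: the corollary is stated as "another consequence of Lemma~\ref{lem:expanddual}," and your spelling-out — applying the lemma to the dual pairs $(M,\mathscr D(M))$ and $(\mathscr D^{-1}(M),M)$ and then invoking the bijectivity from Corollaries~\ref{cor:parametrization} and~\ref{cor:parametrizationL} — is exactly the intended derivation. Your aside about normalizing $\mathscr D(M)$ and $\mathscr D^{-1}(M)$ to self-dual representatives is a reasonable point of care, consistent with the paper's convention that $\expR$ and $\expL$ are defined on $\Irrsd(\tCw)$.
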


 \subsection{Injective reachable seeds and common triangular basis}
\label{subsec:common}

It is known (see, for example, \cite{Qin17}) that, for  every  monoidal cluster  $ \mcluster =\set{M_j}{j\in \K}$ of $\Cw$, 
there exists a sequence of mutations $\Sigma$ and a permutation $\sigma$ on $\Kex$ such that 
\eqn
&&\expR(M_{\Sigma(\shs)}(\bolde_{\sigma(i)})) \in -\bolde_i +\Z^{\Kfr} \quad \text{for} \ i \in \Kex, \\
&&(\Sigma b)_{\sigma(i),\sigma(j)} =b_{i,j} \quad \text{for} \ i,j \in \Kex,
\eneqn
where  $\seed$ is the monoidal seed associated with $\mcluster$,  $\set{\bolde_j \in \Z^\K}{j\in \K}$ denotes the natural basis of $\Z^\K$, 
  $\tB=(b_{i,j})_{i\in \K, j \in \Kex}$ and $\Sigma(\tB)=((\Sigma b)_{i,j})_{i\in \K, j \in \Kex}$ are the exchange matrices of the  monoidal  seeds $\shs$ and $\Sigma(\seed)$, respectively,  and  $M_{\Sigma( \mcluster )}(\bolda)$ denotes the self-dual monomials in the  monoidal cluster  $\Sigma(\mcluster)$. 
See \cite[Proposition 13.4]{GLS11} for  a  precise description of $\Sigma$. 
A  quantum seed $[\shs]$ satisfying the above condition is called \emph{injective-reachable} in \cite{Qin17}.
By Corollary \ref{cor:dualformula},
for any $k\in\Kex$, there exists 
 $\boldc_k\in\Z^{\Kfr}$ such that 
\eqn 
M_{\Sigma( \mcluster )}(\bolde_{\sigma(k)})\conv M(\boldc_k) \simeq \mathscr D^{-1}(M_k) \quad \text{up to a grading shift.}
\eneqn
Note that $M(\boldc_k)$ is a central invertible object of  $\tCw$.

 In \cite{Qin17}, Qin provided  the notion of
 \emph{common triangular bases}  for  a  quantum cluster algebra whose seeds are all injective-reachable.  He studied the existence of common triangular basis for some cases including $\Anw$.

Let $ \mcluster  =\set{M_j}{j\in \K}$ be a  monoidal cluster   in $\Cw$.
Recall that $K(\Cw)[ \mcluster ^{-1}]$ is the Laurent polynomial ring in
$\{[M_j]\}_{j\in\K}$ (see \eqref{eq:Lrpr}). 
For $\boldb\in\Z^J$, we denote by
$[M(\boldb)]$ the element in $K(\Cw)[ \mcluster ^{-1}]$
given by
$$[M(\boldb)]=q^{
\tL(M(\boldb+\bolda), M(\bolda))-\tL(M(\bolda), M(\bolda))}
[M(\boldb+\bolda)]\cdot [M(\bolda)]^{-1}$$
for any $\bolda\in \Z_{\ge0}^J$ such that $\boldb+\bolda\in \Z_{\ge0}^J$.
It does not depend on the choice of $\bolda$.
Note that we do not define
$M(\boldb)$ itself.
It coincides with the ordinary
$[M(\boldb)]$ when $\boldb\in\Z_{\ge0}^J$.

We set 
$$X^\boldb:=q^{-(\wt(M(\boldb)),\wt(M(\boldb)))/4} [M(\boldb)]  \in  \Z[q^{\pm1/2}] \otimes_{\Z[q^{\pm1}]}K(\mathcal C_w)[ \mcluster ^{-1}].$$
We call $X^\boldb$ a \emph{quantum Laurent monomial} in $[\shs]$.
 We  have 
$$X^\boldb\cdot X^{\boldb'}=q^{\frac{1}{2}\la(\boldb,\boldb')}X^{\boldb+\boldb'},$$
where $\la$ is a skew-symmetric bilinear form on $\Z^\K$ such that
$\la(\bolde_i,\bolde_j)=-\La(M_i,M_j)$.

 A $\Z[q^{\pm{1/2}}]$-basis $\mathcal B$ of $\Z[q^{\pm{1/2}}] \otimes_{\Z[q^{\pm1}]}K(\Cwt)$ is  a {\em common triangular basis}  
if it satisfies the following four conditions 
for  every  monoidal  seed  $\shs$  (see, \cite[Section 6.1]{Qin17}):
\begin{enumerate}
\item $\mathcal B$ contains the quantum cluster monomials in the quantum seed $[\shs]$. 
\item (Bar invariance) The  expansion of $b\in \mathcal B$ as a $\Z[q^{\pm\frac{1}{2}}]$-linear combination of  quantum Laurent monomials in $[\shs]$ has bar-invariant coefficients.
\item (Parameterization) The  expansion of $b\in \mathcal B$ as a $\Z[q^{\pm\frac{1}{2}}]$-linear combination of the quantum Laurent monomials in $[\shs]$ contains a unique non-zero term $c_\bolda X^{\bolda}$ such that 
all the other non-zero 
terms $c_\boldb X^\boldb$  satisfy that  $\bolda \succ_\shs \boldb$. 
Moreover, the map  $\mathcal B \to \Z^J$, given by $b \mapsto \bolda$, is a bijection.

\item (Triangularity) For any quantum cluster variable $X_i$ in the seed $[\shs]$ and for any element $S$ in $\mathcal B$, 
 we can write
\eq \label{eq:triangularity} 
X_i   \, S = \sum_{k=1}^t \eps_kq^{\ell_k}b_k
\eneq
for some $\eps_k=\pm1$, $b_k\in \mathcal B$ and $\ell_k\in\Z/2$ ($1\le k\le t$)
such that
$\eps_1=1$,
$\expR(b_1)=\expR(X_i)+\expR(S)$,
$\expR(b_k) \prec_{\shs} \expR(b_1)$ 
and $\ell_1>\ell_k$ if $2\le k\le t$. 
\end{enumerate}

The purpose of this subsection is to prove the following proposition.
Recall that $\Irrsd(\tCw)$ is the set of the isomorphism classes of self-dual simple objects of $\tCw$. 
\Prop\label{prop:triangular} Assume that 
the base field $\cor$ of the quiver Hecke algebra is of characteristic $0$.
Then the basis $\mathcal B:=\set{ q^{-(\wt(S),\wt(S))/4}[S]}{S \in \Irrsd(\Cwt)}$ is a common triangular basis of
the quantum cluster algebra $\Z[q^{\pm1/2}]\otimes_{\Z[q^{\pm1}]}K(\Cwt)$.
\enprop

\begin{remark}
 Proposition~\ref{prop:triangular} is not new.
Combining \cite[Theorem 6.1.6]{Qin17}
and Theorem \ref{thm:moncat}, one can conclude that  the basis  
$\mathcal B$ is a common triangular basis, 
as pointed out at the end of \cite[Section 1.2]{Qin17}. 

But we will give  here a  more direct verification of the conditions  (2)--(4)  rather than using \cite[Theorem 6.1.6]{Qin17}. 
\end{remark}

\bigskip
\Proof[{Proof of Proposition~\ref{prop:triangular}}]
\hs{1ex}Condition (1) follows from Theorem \ref{thm:moncat}.
We show the following lemma for  the proof of condition (2).
\begin{lemma} \label{lem:bar}
Let $ \mcluster  =\set{M_j}{j\in \K}$ be a  monoidal cluster   in $\mathcal C_w$ and   $S$ a self-dual simple module in $\mathcal C_w$. Assume that 
\eq \label{eq:expan}
[S \conv  M(\bolda)] = \sum_{k=1}^r p_k(q) [M(\boldb(k))]
\eneq
for some $\bolda, \boldb(1), \ldots, \boldb(r) \in \Z_{\ge 0}^{\K}$ such that $\boldb(k)\neq \boldb(k')$ for $k\neq k'$, and $p_k(q)\in \Z[q^{\pm1}]$.
Then 
 \eqn 
p_k(q) q^{\frac{1}{2}(\wt(M(\bolda)),\wt(S))+\frac{1}{2} \La(M(\boldb(k)),M(\bolda))} \quad
\text{\emph{is bar-invariant for all}} \ 1\le k \le r.
\eneqn
\end{lemma}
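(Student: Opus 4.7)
The plan is to exploit the bar involution on $K(\Cw)$ together with the duality formula $(X\conv Y)^*\simeq q^{(\wt(X),\wt(Y))}\,Y^*\conv X^*$. Since $\overline{[X]}=[X^*]$ and both $S$ and every $M(\boldb(k))$ are self-dual while $S\conv M(\bolda)$ is not, applying the bar involution to \eqref{eq:expan} and using the resulting twisted anti-automorphism property $\overline{[X][Y]}=q^{(\wt(X),\wt(Y))}\,\overline{[Y]}\cdot\overline{[X]}$ yields
$$q^{(\wt(S),\wt(M(\bolda)))}\,[M(\bolda)\conv S]=\sum_{k=1}^{r}\overline{p_k(q)}\,[M(\boldb(k))].$$

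Next, I would compute the element $[M(\bolda)\conv S\conv M(\bolda)]$ in two different ways: by multiplying \eqref{eq:expan} on the left by $[M(\bolda)]$, and by multiplying the barred identity above on the right by $[M(\bolda)]$. Since $\{M_i\}_{i\in \K}$ is a mutually strongly commuting family of real simple modules, $M(\bolda)$ strongly commutes with every $M(\boldb(k))$, and the self-dual representative of the simple $M(\bolda)\conv M(\boldc)$ is $M(\bolda+\boldc)$; consequently
$$[M(\bolda)]\cdot[M(\boldc)]=q^{-\tL(M(\bolda),M(\boldc))}\,[M(\bolda+\boldc)].$$
Both expressions for $[M(\bolda)\conv S\conv M(\bolda)]$ therefore become linear combinations of the classes $\{[M(\bolda+\boldb(k))]\}_{k}$, which are linearly independent because distinct $\bolda+\boldb(k)$ correspond to distinct quantum Laurent monomials in the ambient quantum torus.

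Equating coefficients leaves
$$\overline{p_k(q)}=p_k(q)\,q^{(\wt(S),\wt(M(\bolda)))+\tL(M(\boldb(k)),M(\bolda))-\tL(M(\bolda),M(\boldb(k)))}.$$
Now, strong commutativity of $M(\bolda)$ and $M(\boldb(k))$ means $\de(M(\bolda),M(\boldb(k)))=0$, hence $\La(M(\boldb(k)),M(\bolda))=-\La(M(\bolda),M(\boldb(k)))$. Substituting the definition $\tL(X,Y)=\tfrac{1}{2}(\La(X,Y)+(\wt(X),\wt(Y)))$ into the exponent above (noting that the $(\wt,\wt)$ contributions cancel by symmetry) collapses it to $(\wt(M(\bolda)),\wt(S))+\La(M(\boldb(k)),M(\bolda))$, and this relation is precisely the bar-invariance of $p_k(q)\,q^{\frac{1}{2}(\wt(M(\bolda)),\wt(S))+\frac{1}{2}\La(M(\boldb(k)),M(\bolda))}$.

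The argument is essentially careful bookkeeping of $q$-shifts; the principal structural inputs are the duality formula for $(X\conv Y)^*$ (which upgrades the bar involution to a twisted anti-automorphism of $K(\Cw)$) and the mutual strong commutativity within the monoidal cluster $\mcluster$ (which makes $\La$ antisymmetric on the $M_i$'s, thereby absorbing the leftover $\tL(M(\boldb(k)),M(\bolda))-\tL(M(\bolda),M(\boldb(k)))$ into a single $\La$-term).
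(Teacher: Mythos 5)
Your proof is correct and follows the same route as the paper's: both compute $[M(\bolda)\conv S\conv M(\bolda)]$ in two ways — once by left-multiplying \eqref{eq:expan} by $[M(\bolda)]$, once by applying the bar involution (via $(X\conv Y)^*\simeq q^{(\wt X,\wt Y)}Y^*\conv X^*$) and right-multiplying by $[M(\bolda)]$ — and then compare coefficients of the linearly independent classes $[M(\boldb(k)+\bolda)]$. The final bookkeeping step you spell out, using $\de(M(\bolda),M(\boldb(k)))=0$ to rewrite $\tL(M(\boldb(k)),M(\bolda))-\tL(M(\bolda),M(\boldb(k)))$ as $\La(M(\boldb(k)),M(\bolda))$, is exactly what the paper leaves implicit in ``we get the desired result.''
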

\begin{proof}
Multiplying $[M(\bolda)]$ to \eqref{eq:expan} from the left, we have
\eqn
 [M(\bolda)\conv S \conv M(\bolda)] = \sum_{k=1}^r p_k(q) q^{-\tLa(M(\bolda),M(\boldb(k)))} [M(\boldb(k)+\bolda)].
\eneqn

On the other hand, taking $-$ to \eqref{eq:expan} and multiplying $[M(\bolda)]$ from the right, we have
\eqn
q^{(\wt (M(\bolda), \wt(S))} [M(\bolda)\conv S \conv M(\bolda)] = \sum_{k=1}^r p_k(q^{-1})  q^{-\tLa(M(\boldb(k)),M(\bolda))} [M(\boldb(k)+\bolda)].
\eneqn
Comparing the coefficients of $[M(\boldb(k)+\bolda)]$, we get the desired result.
\end{proof}
\newcommand{\mua}{\mu_{\bolda}}
\newcommand{\muc}{\mu_{\boldc}}
\newcommand{\mubk}{\mu_{\boldb(k)}}

 Let us prove condition (2). 
For  a self-dual simple module  $S$  in $\mathcal C_w$, write
\eqn
[S \conv  M(\bolda)] = \sum_{k=1}^r p_k(q) [M(\boldb(k))],
\eneqn
where   $\bolda, \boldb(1), \ldots, \boldb(r) \in \Z_{\ge 0}^{\K}$ such that $\boldb(k)\neq \boldb(k')$ for $k\neq k'$, and $p_k(q)\in \Z[q^{\pm1}]$.
Then we have
\eqn
q^{\frac{1}{4}(\mu_\bolda,\mu_{\bolda}) }[S] X^\bolda = \sum_{k=1}^r p_k(q)q^{\frac{1}{4}((\mu_{\boldb(k)},\mu_{\boldb(k)})} X^{\boldb(k)},
\eneqn
where $\muc=\wt(M(\boldc))$ for $\boldc \in \Z_{\ge 0}^\K$.
It follows that 
\eqn
q^{\frac{1}{4}(\mu_\bolda,\mu_{\bolda})} [S]
&&= \sum_{k=1}^r p_k(q) q^{\frac{1}{4}(\mubk,\mubk)} X^{\boldb(k)} X^{-\bolda} \\*
&&= \sum_{k=1}^r p_k(q)  q^{\frac{1}{4}(\mubk,\mubk)+\frac{1}{2}\La(M(\boldb(k)),M(\bolda))} X^{\boldb(k)-\bolda}
\eneqn
and hence
\eqn
q^{-\frac{1}{4}(\wt(S),\wt(S))} [S]
&&= 
q^{\frac{1}{4}(\mu_\bolda,\mu_{\bolda})+\frac{1}{2}(\mua,\wt(S))-\frac{1}{4} (\mubk,\mubk)} [S] \\*
&&= \sum_{k=1}^r p_k(q)  q^{\frac{1}{2}\La(M(\boldb(k)),M(\bolda))+\frac{1}{2}(\mua,\wt(S) )} X^{\boldb(k)-\bolda}.
\eneqn
Thus   condition (2) follows by
 Lemma  \ref{lem:bar}. 
 
 The map in  condition (3) is $\expR$ 
 by Lemma~\ref{lem:Mab},  and hence 
 the last assertion in condition (3) is nothing but 
Corollary \ref{cor:parametrization}. 
\smallskip

 It remains to prove  condition (4). 
By taking $-$ to \eqref{eq:triangularity}, it is equivalent to showing that
for any self-dual simple module $S$ in $\mathcal C_w$ and a cluster variable $M_i$ in the seed $\shs$, we have
\eqn 
[S \conv M_i] = \sum_{j=1}^t\eps_jq^{\ell_j}[S_j]
\eneqn
where $S_j$ is a self-dual simple module in $\mathcal C_w$ 
and $\eps_j=\pm1$, $\ell_j\in\Z/2$ ($1\le j\le t$)
such that 
$\expR(S_1)=\expR(M_i)+\expR(S)$, 
and $\ell_1<\ell_j$ and
$\expR(S_k) \prec_{\shs} \expR(S_1)$ for $2\le k\le t$.

Since $M_i$ is a real simple module, there exist self-dual simple modules $S_j$ and $\ell_j\in\Z$ such that 
\eqn
[ S\conv M_i]=\sum_{j=1}^t q^{\ell_j} [S_j] ,
\eneqn
$S\hconv M_i\simeq q^{\ell_1}S_1$ 
and $\ell_1 < \ell_j$, $\La(S_j, M_i)<\La(S, M_i)$  for $2\le j\le t$
(\cite[Theorem 4.2.1, Theorem 4.1.1]{KKKO18} ).

Hence it remains to prove
$\expR(S_j) \prec_{\shs} \expR(S_1)$ for $2\le j\le t$.

Assume that 
\eqn
[S \conv  M(\bolda)] = \sum_{k=1}^r q^{c_k} [M(\boldb(k))]
\eneqn
for some $\bolda, \boldb(k)\in \Z_{\ge0}^\K$ with $\expR(S)=\boldb(1)-\bolda$ (or $S\hconv M(\bolda)\simeq M(\boldb(1))$).
 Hence we have $\boldb(k) \prec_{\shs}\boldb(1)$ for $2\le k\le r$.

Now we have
\eq
\sum_{j=1}^t q^{\ell_j} [S_j \conv M(\bolda)] &&= [S\conv M_i\conv M(\bolda)] \nn\\
&&= q^{-\La(M_i,M(\bolda))} [ S \conv M(\bolda) \conv M_i ]\nn\\
&&=q^{-\La(M_i,M(\bolda))}  \sum_{k=1}^r q^{c_k-\tLa( M(\boldb(k)),M_i)} [M(\boldb(k)+\bolde_i)].\label{eq:SM}
\eneq
Thus for each $1\le j\le t$, $S_j\hconv M(\bolda)$ is isomorphic to 
$M(\boldb(k_j)+\bolde_i)$ for some $1\le k_j \le r$ up to a grading shift.

Note that  $S\conv M_i\conv M(\bolda)$ 
has a simple head $q^{\ell_1}S_1 \hconv M(\bolda)$ and hence  we have 
 $$S_1 \hconv M(\bolda) \simeq M(\boldb(1)+\bolde_i)
\qt{up to a grading shift.} $$ 
Hence $k_1=1$.
Since $M(\boldb(1)+\bolde_i)$ appears once in \eqref{eq:SM} 
by Lemma~\ref{lem:Mab}, 
we have
$k_j\not=1$ for $j\not=1$.

It follows that 
$$\expR (S_1) = (\boldb(1)+\bolde_i-\bolda)
 \succ_{\shs} \ ( \boldb(k_j)+\bolde_i -\bolda)  =\expR (S_j) \quad \text{for } \ 2\le j \le t.  $$
Thus we obtain condition (4), and the proof of  
Proposition~\ref{prop:triangular} is complete.
\QED

\begin{remark}
Qin showed  that  the upper global basis $\mathcal B$ is a common triangular basis  of $\Z[q^{\pm1/2}] \otimes_{\Z[q^{\pm1}]}K(\Cwt)$ if 
$w$ has an \emph{adaptable reduced expression} (for the definition, see \cite[Section 8.2]{Qin17}).
\end{remark}
\hfill

\section{Commutativity with cluster variables and  denominator vectors} \label{sec:commden}

In this section, we study the denominator vectors introduced in \cite{FZ02}
in the context of monoidal categorification.
As a byproduct we give a proof to some conjectures of
Fomin-Zelevinsky  (\cite{FZ07})  in the case of $\Anw|_{q=1}$. 

\begin{theorem} \label{thm:commcv}
Let $X$ be a simple module and $M$  a real simple module in $R\gmod$.
If $[X] = [M] \phi$   for some $\phi$ in ${K(R\gmod)|}_{q=1}$,
then $X \simeq M\conv Y$ for some simple $R$-module $Y$ which strongly commutes with $M$.
\end{theorem}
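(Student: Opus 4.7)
The plan is to construct $Y$ as a simple subquotient of an appropriate convolution involving $M$ and $X$, and then use the Grothendieck-ring hypothesis to bootstrap from a numerical divisibility to an actual isomorphism of modules.

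First, writing $\phi = \sum_j n_j [S_j]$ in the $\Z$-basis of self-dual simple classes, the hypothesis becomes $[X] = \sum_j n_j [M \conv S_j]$ in $K(R\gmod)|_{q=1}$. Hence $X$ must appear as a composition factor of $M \conv S_{j_0}$ for some index with $n_{j_0} > 0$. Using the dominance-order analysis of Section~\ref{sec:tri}, in particular Lemma~\ref{lem:Mab} which identifies the head of $M \conv (-)$ with the unique maximal term of a Laurent expansion, I can arrange (possibly after replacing the choice of $S_{j_0}$) that $X \simeq M \hconv S_{j_0}$ up to a grading shift; then set $Y := S_{j_0}$.

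The crux of the argument is to show that $M \conv Y$ is in fact simple (so that $M \hconv Y$ agrees with the full convolution $M \conv Y \simeq X$), equivalently that $\de(M,Y) = 0$. Were $M \conv Y$ not simple, its composition factors strictly below $X$ in the dominance order would need to cancel against other terms $[M \conv S_j]$ when rewriting $[M]\phi - [X] = 0$ in the basis of self-dual simple classes. A careful bookkeeping, using \cite[Corollary 3.7]{KKKO15} as a cancellation principle for real simple convolutions together with the $\Z$-basis property of self-dual simple classes, should produce a contradiction. Once $M \conv Y$ is known to be simple, \cite[Lemma 3.2.3]{KKKO18} yields the strong commutativity of $M$ and $Y$.

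The main obstacle is this simplicity argument for $M \conv Y$. The divisibility $[X] = [M]\phi$ holds only after specialization at $q = 1$, which loses the graded information that normally distinguishes composition factors, and translating it into a module-theoretic statement requires careful control of the interference between composition factors of the various $M \conv S_j$. I expect this to require an induction on $|\wt(Y)|$, organized via the dominance-order machinery of Section~\ref{sec:tri}, and invoking the parameterization of self-dual simples in $\tCw$ by $\Z^\K$ (Corollary~\ref{cor:parametrization}) to pin down $Y$ uniquely from the class $\phi$.
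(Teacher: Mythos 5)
Your proposal identifies the right general shape (produce $Y$ as one of the simples appearing in $\phi$, show $X\simeq M\hconv Y$, and then upgrade to simplicity of $M\conv Y$) but it is not a complete proof: the step you label as the crux is left entirely to ``a careful bookkeeping \dots should produce a contradiction'' and ``I expect this to require an induction,'' which is exactly the content that needs to be supplied.

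Two concrete problems. First, the tools you invoke do not apply here. Lemma~\ref{lem:Mab}, the dominance order $\prec_{\shs}$, and Corollary~\ref{cor:parametrization} all live inside the cluster/localization setting of Section~\ref{sec:tri}, where $M$ ranges over monomials $M(\bolda)$ in a fixed monoidal cluster; in Theorem~\ref{thm:commcv} the module $M$ is an arbitrary real simple module of $R\gmod$, and no monoidal cluster is given, so there is no dominance order to speak of. Second, and more importantly, even granting $X\simeq M\hconv Y$ for some summand $Y$ of $\phi$, you offer no mechanism to pass from this to $\de(M,Y)=0$. The paper's argument resolves both issues at once in a short, non-inductive way: write $\phi=\sum_{i}[Y_i]-\sum_{j}[Z_j]$ with no common terms, pick $i_0$ maximizing $\La(M,Y_i)$, and use the fact that every simple subquotient $S$ of $M\conv Z_j$ has $\La(M,S)\le\La(M,Z_j)=\La(M,M\hconv Z_j)<\La(M,Y_{i_0})$ to see that $M\hconv Y_{i_0}$ cannot cancel and hence equals $X$. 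This yields $\La(M,Y_i)\le\La(M,X)$ for all $i$, and the symmetric argument yields $\La(Y_i,M)\le\La(X,M)$; in particular $\La(Y_{i_0},M)\le\La(M\hconv Y_{i_0},M)$, and \cite[Corollary 4.1.2(i)]{KKKO18} then gives directly that $M$ and $Y_{i_0}$ strongly commute and $X\simeq M\conv Y_{i_0}$. It is this two-sided $\La$-inequality together with \cite[Corollary 4.1.2(i)]{KKKO18}, not a cancellation argument via \cite[Corollary 3.7]{KKKO15} or an induction on $|\wt(Y)|$, that closes the gap; that key lemma is absent from your proposal.
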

\begin{proof}
We may assume that
\eqn
\phi=\sum_{i \in K }[Y_i]-\sum_{j\in K'}[Z_j],
\eneqn
where $Y_i$ and $Z_j$ are simple $R$-modules and there is no pair $(i,j)\in K\times K'$  such that $Y_i\simeq Z_j$.
It follows that 
\eqn 
[X] +\sum_{j\in K' }[M\conv Z_j]= \sum_{i \in K}[M\conv Y_i].
\eneqn
Take $i_0$ such that $\La(M,Y_{i_0})=\max\set{\La(M,Y_i)}{i\in K}$. 
 For $j \in K'$, the head $M \hconv Z_j$ appears as a subquotient of  some $M \conv Y_i$.
Since $M \hconv Z_j \not \simeq  M \hconv Y_i$, we have
\eqn 
\La(M,Z_j)=\La(M,M\hconv Z_j) < \La(M,M \hconv Y_i) = \La(M,Y_i) \le  \La(M,Y_{i_0}).
\eneqn
 Since any simple  subquotient $S$ of $M\conv Z_j$ satisfies 
\eqn \La(M,S) \le \La(M,Z_j) <\La(M,Y_{i_0}) =\La(M,M\hconv Y_{i_0}), \eneqn
we conclude that
 $M\hconv Y_{i_0}$ does not appear in $M\conv Z_j$ for any $j \in K'$.
 Hence $$X\simeq M\hconv Y_{i_0}.$$

In particular, we have
$$\La(M, Y_i)\le \La(M,Y_{i_0})=\La(M,M\hconv Y_{i_0})=\La(M, X)
\qt{for any $i\in K$.}$$

By a similar reasoning, we have 
\eqn 
&&\La(Y_i,M) \le \La(X,M) \quad \text{for any $i$.}
\eneqn

In particular, we have
$$\La(Y_{i_0},M)\le \La(X,M) = \La(M\hconv Y_{i_0},M) .$$
Then by \cite[Corollary 4.1.2 (i)]{KKKO18}, we conclude that $M$ and $Y_{i_0}$ strongly commute
and $X \simeq M \conv Y_{i_0}$, as desired.
\end{proof}

\begin{lemma} \label{lem:commcv1}
Let $\set{M_j}{j\in \K}$ be a   monoidal cluster   in $\mathcal C_w$. 
Let $X$ be a simple module in $\mathcal C_w$ and $k \in \Kex$.
Assume that $$[X \conv M(\bolda)] = \sum_s [M(\boldb(s))] \quad 
 \text{in} \ K(\mathcal C_w)|_{q=1} 
 \quad \text{for some } \ \bolda, \boldb(s) \in \Z_{\ge 0}^\K.$$
If $\bolda_k=0$, then
$X$ strongly commutes with $M_k$.
\end{lemma}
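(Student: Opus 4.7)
Plan. The approach is to reduce the problem to Theorem~\ref{thm:commcv} by showing that $[M_k]$ divides $[X]$ in $K(\mathcal{C}_w)|_{q=1}$, which is precisely the hypothesis required to apply that theorem with $M=M_k$.

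First, I would invoke the Laurent phenomenon (Lemma~\ref{lem:LP}) to view the identity $[X]\cdot[M(\bolda)] = \sum_s [M(\boldb(s))]$ inside the Laurent polynomial ring $\mathcal{L} = \Z[[M_j]^{\pm 1}]_{j\in\K}$. Since $\bolda_k = 0$, the unique Laurent expansion
\[
[X] \;=\; \sum_s \prod_{j\in\K} [M_j]^{\boldb(s)_j - \bolda_j}
\]
of $[X]$ in the cluster $\{[M_j]\}_{j\in\K}$ has every monomial with non-negative $[M_k]$-exponent equal to $\boldb(s)_k$. Set $m := \min_s \boldb(s)_k \ge 0$. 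Then $\sum_s [M(\boldb(s))]$ is divisible by $[M_k]^m$ in the polynomial subring $\Z[[M_j]]_{j\in\K}$, whereas $[M(\bolda)] = \prod_j [M_j]^{\bolda_j}$ is coprime to $[M_k]$.

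The second step is to leverage the fact that $K(\mathcal{C}_w)|_{q=1} \cong \mathbb{C}[N(w)]$ is a polynomial ring, hence a UFD in which each cluster variable $[M_j]$ is a prime element (this reflects the irreducibility of the real simple module $M_j$). Combining primality of $[M_k]$ with the coprimality just observed, we deduce $[M_k]^m \mid [X]$ in $K(\mathcal{C}_w)|_{q=1}$. In the case $m\ge 1$, this gives $[X] = [M_k]\,\phi$ for some $\phi \in K(\mathcal{C}_w)|_{q=1}$, and Theorem~\ref{thm:commcv} yields a simple module $Y$ with $X \simeq M_k \conv Y$ and $Y$ strongly commuting with $M_k$. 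Strong commutativity of $X$ itself with $M_k$ then follows by computing $X \conv M_k \simeq q^c\, M_k^{\circ 2} \conv Y$ and observing that strong commutativity of $Y$ with the real simple $M_k$ passes to convolution powers $M_k^{\circ n}$ (by \cite[Lemma~3.2.3]{KKKO18}), so $X \conv M_k$ is simple.

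The main obstacle is the boundary case $m=0$, where some $\boldb(s_*)_k = 0$ and $[M_k]$ does \emph{not} divide $[X]$ (e.g.\ $X = M_j$ for $j \ne k$ gives $m=0$ while still strongly commuting with $M_k$). In this case a bare application of Theorem~\ref{thm:commcv} is insufficient. My plan would be to write $[X] = [M_k]\,\psi + \xi$, where $\xi := \sum_{s:\boldb(s)_k=0} \prod_{j\ne k}[M_j]^{\boldb(s)_j-\bolda_j}$ captures the ``$[M_k]^0$-part'' and $\psi$ the rest, and argue that the constant part $\xi$ corresponds to a simple module that strongly commutes with $M_k$ (since $\xi$ involves only cluster variables from $\{M_j\}_{j\ne k}$, each of which already strongly commutes with $M_k$), while the $[M_k]$-divisible part is handled as in the previous step. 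Combining these contributions would show $X \conv M_k$ is simple, completing the proof.
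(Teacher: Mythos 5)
Your analysis of the $m\ge 1$ case is correct, but the decisive case is $m=0$, and there your argument is not an argument but a wish. In the $m=0$ case you propose to split the Laurent expansion $[X] = [M_k]\,\psi + \xi$ and to argue separately that $\xi$ ``corresponds to a simple module strongly commuting with $M_k$'' and that the $[M_k]$-divisible piece is handled as before, then ``combine.'' But $\xi$ is merely a slice of a Laurent polynomial; it is not the class of any module, let alone a simple one. A sum of several cluster monomials $\prod_{j\neq k}[M_j]^{c_j}$ is the class of a module whose composition factors are those cluster monomials, which is not simple unless the sum is a single term. And even if both pieces did correspond to modules strongly commuting with $M_k$, there is no mechanism by which $X\conv M_k$ would then be simple: $[X]$ is the class of a single simple module, not a sum of the classes of two modules, so the ``decomposition'' $[M_k]\psi+\xi$ does not reflect any structure of $X$. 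Note also that $m=0$ is not a corner case: it occurs already when $X=M_j$ with $j\neq k$, which certainly strongly commutes with $M_k$, so any correct proof must handle it.

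The paper sidesteps this by applying prime factorization not to $[X]$ but to the composition factors of $X\conv M_k$. Writing $[X\conv M_k]=\sum_i[S_i]$, one compares the two decompositions of $[X\conv M(\bolda)\conv M_k]$ to see that each $[S_i]\cdot[M(\bolda)]$ is a sum of terms $[M(\boldb(s))]\cdot[M_k]$, hence divisible by $[M_k]$; since $\bolda_k=0$, primality forces $[M_k]\mid[S_i]$ for \emph{every} $i$. Then Theorem~\ref{thm:commcv} gives $S_i\simeq K_i\conv M_k$ with $K_i$ strongly commuting with $M_k$, and cancelling $[M_k]$ from $[X][M_k]=\sum_i[K_i][M_k]$ gives $[X]=\sum_i[K_i]$; simplicity of $X$ forces the sum to have one term and $X\simeq K_i$. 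The point is that multiplying by $M_k$ first makes every composition factor divisible by $[M_k]$, so the Theorem~\ref{thm:commcv} step always applies, with no $m=0$ boundary case. To repair your proof you should adopt this maneuver rather than try to salvage the additive splitting of $[X]$.
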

\begin{proof}
Write 
$[X\conv M_k] =\displaystyle \sum_i[S_i]$, where $S_i$'s are some  simple modules.
Then 
\eqn 
\sum_s [M(\boldb(s))\conv M_k]  = [X \conv M(\bolda) \conv M_k] = \sum_i [S_i \conv M(\bolda)].
\eneqn
Hence for every $i$, the element
$[S_i ]\cdot[M(\bolda)]$ is a sum of elements of
 the form $[M(\boldb(s))]\cdot[M_k]$.
It follows that $[M_k]$ divides $[S_i]$, because  $K(R\gmod)|_{q=1} \simeq \An |_{q=1}$ is a factorial ring 
 and the cluster variables $[M_j]$ are  prime elements in it (\cite{GLS13}). 
 In other word, we have $[S_i] = \phi_i [M_k]$ for some $\phi_i$ and hence by Theorem \ref{thm:commcv}, there exists a simple module $K_i$  such that it strongly commutes with $M_k$ and $S_i \simeq K_i \conv M_k$  for all $i$.

It follows that $$[X][M_k]=\displaystyle \sum_i[S_i]= \sum_i [K_i][M_k]$$
and hence $[X]=\displaystyle \sum_i [K_i]$. Since $X$ is simple, we conclude that
$X\simeq K_i$ for some $i$. 
\end{proof}

\begin{lemma}\label{lem:commcv2}
Let $  \mcluster  = \set{M_j}{j\in \K}$ be a  monoidal cluster   in $\mathcal C_w$. 
For every simple $X$ in $\mathcal C_w$ and $k \in \Kex$, there exist $\bolda, \boldb(s), \boldc(t) \in \Z_{\ge 0}^\K$ such that $\bolda_k=0$, $\boldc(t)_k>0$, 
and
\eqn
[X \conv M(\bolda)]=\sum_s [M(\boldb(s))] +\sum_t [M'(\boldc(t))] \quad \text{in} \ K(\mathcal C_w)|_{q=1},
\eneqn
where $M'(\boldc)$ for $\boldc \in \Z^\K$ denotes a cluster monomial of  the  monoidal cluster    $\mu_k( \mcluster  )$, the mutation of  $ \mcluster  $ at $k$.
\end{lemma}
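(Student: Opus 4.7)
The plan is to apply Lemma~\ref{lem:LP} to $X$ in the mutated cluster $\mu_k(\mcluster)$. This will yield an expansion
\[
[X \conv M'(\bolda')] \;=\; \sum_p [M'(\boldd(p))] \qquad \text{in } K(\mathcal C_w)|_{q=1},
\]
with $\bolda' \in \Z_{\ge 0}^\K$ satisfying $\bolda'_j = 0$ for $j \in \Kfr$ and $\boldd(p) \in \Z_{\ge 0}^\K$. Setting $m := \bolda'_k$ and $\bolda := \bolda' - m\bolde_k$ gives $\bolda_k = 0$, and at $q=1$ one has $[M'(\bolda')] = [M(\bolda)] \cdot [M_k']^m$, so the identity becomes
\[
[X \conv M(\bolda)] \cdot [M_k']^m \;=\; \sum_p [M'(\boldd(p))].
\]

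In the easy case $m=0$, the lemma will follow immediately by partitioning $\{p\}$ into those with $\boldd(p)_k = 0$, which contribute $[M(\boldb(s))]$-type terms (cluster monomials of $\mcluster$), and those with $\boldd(p)_k > 0$, which contribute $[M'(\boldc(t))]$-type terms (cluster monomials of $\mu_k(\mcluster)$ with $\boldc(t)_k > 0$).

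The substantive case is $m > 0$, where the division by $[M_k']^m$ must be carried out with care. The plan is to invoke three ingredients: the factoriality of $K(\mathcal C_w)|_{q=1} \simeq \Anw|_{q=1}$ with $[M_k']$ a prime element (\cite{GLS13}); the exchange relation $[M_k] \cdot [M_k'] = [M(A)] + [M(B)]$ at $q=1$ (from Theorem~\ref{thm:moncat}\,(ii), with $A = \sum_{b_{ik}>0} b_{ik}\bolde_i$ and $B = \sum_{b_{ki}>0} b_{ki}\bolde_i$ supported off the $k$-th coordinate); and the algebraic independence of the cluster variables in each cluster, guaranteed by the Laurent phenomenon. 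One partitions the sum $\sum_p [M'(\boldd(p))]$ by the $k$-th coordinate of $\boldd(p)$: for $\boldd(p)_k \ge m$, the factor $[M_k']^m$ may be stripped directly, yielding a cluster monomial of $\mcluster$ when $\boldd(p)_k = m$ and of $\mu_k(\mcluster)$ with positive $M_k'$-multiplicity when $\boldd(p)_k > m$. For $\boldd(p)_k < m$ one trades $[M_k']$-factors for $[M_k]$-factors through the exchange relation, which inserts additional $\mcluster$-cluster monomials. An induction on $m$ will reduce the argument to the base case $m = 0$.

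The hard part will be the bookkeeping in the $m > 0$ reduction: each application of the exchange relation can convert a cluster monomial of one cluster into a sum involving cluster monomials of the other, and one must verify that after the full reorganization the result is still a nonnegative integer combination consisting only of $[M(\boldb(s))]$-terms and of $[M'(\boldc(t))]$-terms with $\boldc(t)_k > 0$. The non-negativity inherited from the Grothendieck ring, combined with the algebraic independence of cluster variables in each cluster, is what ensures that the decomposition produced by this procedure is consistent at every step and has the precise form demanded by the statement.
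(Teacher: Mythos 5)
Your $m=0$ case is correct, and your overall strategy (start from the Laurent expansion in one cluster, then use the exchange relation to reorganize) is in the same family as the paper's argument. However, the paper starts from the $\mcluster$-expansion $[X\conv M(\bolda)]=\sum_s[M(\boldb(s))]$ rather than the $\mu_k(\mcluster)$-expansion, and this is not merely a cosmetic choice: it determines which way you need to multiply, and your version of the $m>0$ step, as written, does not close.

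The gap is in the ``trading'' step. You have $[X\conv M(\bolda)]\cdot[M_k']^m=\sum_p[M'(\boldd(p))]$ and propose, for terms with $\boldd(p)_k<m$, to trade $[M_k']$-factors for $[M_k]$-factors through the exchange relation. But the exchange relation $[M_k]\,[M_k']=[M(A)]+[M(B)]$ only lets you express $[M_k']$ as $([M(A)]+[M(B)])/[M_k]$, introducing a denominator; there is no way to ``strip'' or ``trade'' $[M_k']$ termwise inside the Grothendieck ring, and divisibility of a sum by the prime $[M_k']$ does not imply divisibility of each summand. The correct move is the opposite: \emph{multiply} both sides by $[M_k]^m$, so the left side becomes $[X\conv M(\bolda)]\cdot([U]+[V])^m$, and on the right $[M'(\boldd(p))]\,[M_k]^m$ expands via the exchange relation into pure $\mcluster$-monomials (when $\boldd(p)_k\le m$) and $\mu_k(\mcluster)$-monomials with positive $k$-coordinate (when $\boldd(p)_k>m$). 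The left side then contains the summand $[X\conv M(\bolda)]\cdot[U]^m$, which is $[X\conv M(\bolda'')]$ for some $\bolda''$ with $\bolda''_k=0$ since $U$ is supported off $k$. The last ingredient you need -- and do not explicitly invoke -- is that the classes of self-dual simples form a $\Z$-basis of $K(R\gmod)|_{q=1}$ (the paper's sublemma), so that two expressions of a class as a sum of simple classes must match up term by term; only this lets you conclude that the summand $[X\conv M(\bolda'')]$ on the left is itself a subsum of the reorganized right side. The appeal to ``non-negativity'' and ``algebraic independence of cluster variables'' in your write-up points in the right direction but is not a substitute for this matching argument. With the multiplication reversed and the sublemma invoked, your proof becomes a mirror image of the paper's; as it stands, the reduction to $m=0$ by induction and exchange-relation trading is not a valid argument.
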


\begin{proof}
We can write
\eqn
[X \conv M(\bolda)] = \sum_s [M(\boldb(s))] \quad \text{with} \ \bolda \in \Z_{\ge 0}^\K, \boldb(s)  \in \Z^\K.
\eneqn

Write $\bolda=\overline \bolda + \bolda_k \bolde_k$, where $\set{\bolde_j}{j \in \K}$ is  the natural basis of $\Z^J$. 

Set $V := M\left (\sum_{b_{ki}> 0} b_{ki}\bolde_i\right)$, 
and $U := M\left(\sum_{b_{ik} >0} b_{ik}\bolde_i\right)$ so that
$$[M_k \conv M_k'] =[U] + [V].$$

Set $\boldb(s)=\overline{\boldb(s)} + (\boldb(s))_k \bolde_k$.
By multiplying $[{M'_k}^{\circ \bolda_k}]$, we obtain
\eqn 
[X \conv M(\overline \bolda)]([U]+[V])]^{\bolda_k} =&&
\sum_{\boldb(s)_k \ge \bolda_k} [M(\boldb(s)-\bolda_k \bolde_k)]([U]+[V])]^{\bolda_k} \\*
&&\hs{5ex}+ \sum_{\boldb(s)_k < \bolda_k} [M(\overline{\boldb(s)})]([U]+[V])]^{\boldb(s)_k}[M'_k]^{\bolda_k-\boldb(s)_k}.
\eneqn
The right hand side is a sum of $[M(\boldb)]$'s and $[M'(\boldc)]$'s such that $\boldc_{k} >0$ ($\boldb,\boldc \in \Z_{\ge 0}^{\K}$),
 while  $[X \conv M(\overline{\bolda}) \conv U^{\circ \bolda_k}]$ is a component of the left hand side. 
Hence, by the following sublemma, 
$[X \conv M(\overline{\bolda}) \conv U^{\circ \bolda_k}]$ is a sum of $[M(\boldb)]$'s and $[M'(\boldc)]$'s, as desired.
\end{proof}
\begin{sublemma}
Let $\{X_a\}_{a\in A }$ and $\{Y_b\}_{b\in B}$ 
be finite families of simple modules in $R\gmod$.
If \eqn \sum_{a\in A}[X_a]=\sum_{b\in B}[Y_b] \quad \text{in} \ K(R\gmod)|_{q=1},  \eneqn
then
there exists a bijection $f:A\simeq B$ such that $X_a \simeq Y_{f(a)}$ up to a grading shift.
\end{sublemma}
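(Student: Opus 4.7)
The plan is to reduce the identity in $K(R\gmod)|_{q=1}$ to an equality of $\Z$-linear combinations of basis elements, and then read off the bijection by matching coefficients.

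First I would recall the structural fact already stated in the paper: the classes of self-dual simple modules form a $\Z[q^{\pm 1}]$-basis of $K(R\gmod)$. Consequently, the quotient $K(R\gmod)|_{q=1} = K(R\gmod)/(q-1)K(R\gmod)$ is a free $\Z$-module whose basis is given by the images of the classes of self-dual simple modules. Next I would invoke the fact that every simple module $M$ in $R\gmod$ is isomorphic, up to a grading shift, to a unique self-dual simple $M^{\mathrm{sd}}$; in particular $[M]$ and $[M^{\mathrm{sd}}]$ have the same image in $K(R\gmod)|_{q=1}$.

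Given these observations, each $[X_a]$ reduces modulo $(q-1)$ to the basis element $[X_a^{\mathrm{sd}}]$, and similarly for each $[Y_b]$. The hypothesis $\sum_{a\in A}[X_a]=\sum_{b\in B}[Y_b]$ in $K(R\gmod)|_{q=1}$ thus becomes the equality
\[
\sum_{a \in A}[X_a^{\mathrm{sd}}] \;=\; \sum_{b\in B}[Y_b^{\mathrm{sd}}]
\]
of $\Z$-linear combinations of distinct basis elements. Matching coefficients on each basis element produces, for every self-dual simple $S$, the equality of cardinalities $\#\{a\in A\mid X_a^{\mathrm{sd}}\simeq S\}=\#\{b\in B\mid Y_b^{\mathrm{sd}}\simeq S\}$. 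Choosing, for each such $S$, any bijection between these two finite sets and assembling them yields the desired bijection $f\col A\isoto B$ with $X_a^{\mathrm{sd}}\simeq Y_{f(a)}^{\mathrm{sd}}$, hence $X_a\simeq Y_{f(a)}$ up to a grading shift.

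There is no real obstacle here: the entire argument rests on the known fact that self-dual simples form a basis of the Grothendieck group and the uniqueness (up to isomorphism) of the self-dual representative in each grading-shift class. The only point that warrants care is ensuring that distinct self-dual simples remain $\Z$-linearly independent after specializing $q\mapsto 1$, which is immediate because a free $\Z[q^{\pm1}]$-basis specializes to a free $\Z$-basis of the quotient by $(q-1)$.
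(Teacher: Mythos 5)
Your argument is correct and is essentially the paper's own (which simply says the claim is immediate from the fact that the self-dual simples form a $\Z[q^{\pm1}]$-basis of $K(R\gmod)$); you have merely spelled out the coefficient-matching that the authors leave implicit.
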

\begin{proof}
It is immediate from the fact that the isomorphism classes of self-dual simple modules in $R\gmod$ forms a $\Z[q^{\pm 1}]$-basis of $K(R\gmod)$.
\end{proof}

\begin{lemma}\label{lem:commcv3}
Let $\set{M_j}{j\in \K}$ be a   monoidal cluster   in $\mathcal C_w$. 
Let $X$ be a simple module which strongly commutes with $M_k$ for some $k \in \K$.
Then we have  
\eqn
[X\conv M(\bolda)] = \sum_{s} [M(\boldb(s))] \quad \text{in} \ K(\mathcal C_w)|_{q=1} 
\eneqn
for some $\bolda, \boldb(s) \in \Z_{\ge0}^\K$ with $\bolda_k=0$.
\end{lemma}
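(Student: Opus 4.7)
\noindent
The plan is to handle the two cases $k\in\Kfr$ and $k\in\Kex$ separately. If $k$ is a frozen index, the conclusion will follow immediately from Lemma~\ref{lem:LP}: the expansion it produces has $\bolda_j=0$ for every $j\in\Kfr$, so in particular $\bolda_k=0$, and the strong commutation hypothesis is not needed in this case.

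If $k\in\Kex$, I would first apply Lemma~\ref{lem:commcv2} to $X$ at the index $k$, obtaining
\[ [X\conv M(\bolda)] = \sum_s[M(\boldb(s))] + \sum_t[M'(\boldc(t))] \]
with $\bolda_k=0$ and $\boldc(t)_k>0$. The task then reduces to showing that the second sum is identically zero. To do this, I would multiply both sides by $[M_k]^N$ for $N\ge\max_t\boldc(t)_k$ and invoke the exchange relation $[M_k][M_k']=[U]+[V]$: this converts each $[M'(\boldc(t))][M_k]^N$ into a non-negative integer combination of cluster monomials of $\mcluster$ of the form $[M(\boldb)]$ with $\boldb_k=N-\boldc(t)_k<N$, while the terms $[M(\boldb(s))][M_k]^N=[M(\boldb(s)+N\bolde_k)]$ have $\boldb_k\ge N$. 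Because $X$ strongly commutes with $M_k$, additivity of the invariant $\de(\cdot,\cdot)$ guarantees that $Y_N:=X\conv M_k^{\conv N}$ is a simple module, so the left-hand side equals $[Y_N\conv M(\bolda)]$, now presented as a non-negative combination of cluster monomials of $\mcluster$ with $\bolda_k=0$. Applying Lemma~\ref{lem:commcv1} to $Y_N$ then confirms the consistency of this expansion.

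The hard part will be to conclude from this expansion that every cluster monomial $[M(\boldb)]$ appearing with nonzero coefficient satisfies $\boldb_k\ge N$, which would force the second-sum contributions (those with $\boldb_k<N$) to cancel identically. I plan to argue this via factoriality of $K(\mathcal C_w)|_{q=1}\cong\Anw|_{q=1}$ together with the primality of $[M_k]$ in this ring (due to \cite{GLS13}): iterating Theorem~\ref{thm:commcv} on $Y_N\simeq M_k^{\conv N}\conv X$ (up to a grading shift), one peels off $N$ copies of $M_k$ from every composition factor of $Y_N\conv M(\bolda)$, showing that in any cluster-monomial expansion the $k$-th exponent is at least $N$. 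This forces $\sum_t[M'(\boldc(t))]=0$ in the original equation, delivering $[X\conv M(\bolda)]=\sum_s[M(\boldb(s))]$ with $\bolda_k=0$, as required.
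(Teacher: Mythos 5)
Your opening moves are sound: isolating the frozen case via Lemma~\ref{lem:LP}, then applying Lemma~\ref{lem:commcv2} to get the mixed expansion $[X\conv M(\bolda)]=\sum_s[M(\boldb(s))]+\sum_t[M'(\boldc(t))]$ with $\bolda_k=0$, $\boldc(t)_k>0$, is exactly how the paper begins. But the rest of your route diverges and has a genuine gap at its pivotal step, namely the claim that ``iterating Theorem~\ref{thm:commcv} \ldots\ one peels off $N$ copies of $M_k$ from every composition factor of $Y_N\conv M(\bolda)$.'' Theorem~\ref{thm:commcv} only says: \emph{if} $[M_k]$ already divides the class of a simple module $S$, \emph{then} $S\simeq M_k\conv Y$. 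It does not tell you that $[M_k]^N$ divides the class of each composition factor of $Y_N\conv M(\bolda)$. Divisibility of the total class $[Y_N\conv M(\bolda)]=[M_k]^N[X\conv M(\bolda)]$ by $[M_k]^N$ does not descend to divisibility of each simple summand; and in general composition factors of $M_k^{\circ N}\conv W$ are \emph{not} divisible by $[M_k]^N$. The missing ingredient is precisely that every subquotient of $X\conv M(\bolda)$ strongly commutes with $M_k$ (because $X$ and $M(\bolda)$ do), which is \cite[Proposition 3.2.10]{KKKO18}; only then does $M_k^{\circ N}\conv S_i$ stay simple for each composition factor $S_i$ of $X\conv M(\bolda)$, so that the factors of $Y_N\conv M(\bolda)$ really are of the form $M_k^{\circ N}\conv S_i$ and each is divisible by $[M_k]^N$. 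Without invoking that proposition (or something equivalent, e.g.\ additivity of $\de$ applied to subquotients), your ``peeling off'' step is unjustified.

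Once you grant yourself that every subquotient of $X\conv M(\bolda)$ strongly commutes with $M_k$, your whole detour through $[M_k]^N$, the exchange relation, and factoriality becomes unnecessary. The paper goes straight from there: each $M'(\boldc(t))$ is a subquotient, hence strongly commutes with $M_k$, hence $\de(M_k,M'(\boldc(t)))=0$; but additivity of $\de$ together with $\de(M_k,M_k')=1$ and $\de(M_k,M_j)=0$ ($j\ne k$) gives $\de(M_k,M'(\boldc(t)))=\boldc(t)_k>0$, a contradiction. So the index set of the second sum is empty, which is the assertion. Also, as a minor point, your appeal to Lemma~\ref{lem:commcv1} ``to confirm the consistency of this expansion'' is circular: it only re-derives that $Y_N$ strongly commutes with $M_k$, which you had already used to define $Y_N$.
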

\begin{proof}
Write 
$$[X \conv M(\bolda)]=\sum_s [M(\boldb(s))] +\sum_{t\in K} [M'(\boldc(t))],$$
with $\bolda, \boldb(s), \boldc(t) \in \Z_{\ge 0}^\K$ such that  $\bolda_k=0$ and $\boldc(t)_k> 0$ for $t \in K$.
Since $X$ and $M(\bolda)$ strongly commute with $M_k$, every subquotient of $X \conv M(\bolda)$ strongly commutes with $M_k$ by \cite[Proposition 3.2.10]{KKKO18}. 
In particular, if $t\in K$, then $M'(\boldc(t))$ strongly commutes with $M_k$ so that 
$$0=\de(M_k,M'(\boldc(t))) = \de(M_k,{M'_k}^{\circ \boldc(t)_k}) = \boldc(t)_k >0, $$
which is a contradiction. Hence $K=\emptyset$ and the assertion follows.
\end{proof}

Let $X$ be a simple module in $\mathcal C_w$ and $ \mcluster  =\set{M_j}{j\in\K}$ a   monoidal cluster   in $\mathcal C_w$. 
Write $[X]=\sum_s [M(\boldb(s))]$ with $\boldb(s) \in \Z^\K$ in 
$K(\mathcal C_w)[ \mcluster ^{-1}]|_{q=1}$. 
Then the vector $\boldd(X) \in \Z^{\K}$ defined by
$$\boldd(X)_k \seteq -\min_{s}\{(\boldb(s))_k\} \quad \text{for} \ k \in \Kex, \qtext  \boldd(X)_j=0 \quad \text{for } \ j \in \Kfr$$
is called the \emph{denominator vector of $[X]$} in the cluster algebra $K(\mathcal C_w)|_{q=1}$, which is introduced in \cite{FZ02}. 

By Lemma \ref{lem:commcv1} and Lemma \ref{lem:commcv3}, we obtain the following proposition.

\begin{prop} \label{prop:commcv}
Let  $ \mcluster  =\set{M_j}{j\in \K}$ be a   monoidal cluster    in $\mathcal C_w$.
If $X$ is a simple in $\mathcal C_w$, $k \in \Kex$, and $[X] = \sum_{s=1}^{N}[M(\boldb(s))]$ in $K(\mathcal C_w)[ \mcluster ^{-1}]|_{q=1}$ for some $\boldb(s) \in \Z^J$,
then the followings are equivalent:

\bna
\item
$X$ strongly commutes with $M_k$,
\item 
$(\boldb(s))_k \ge 0$ for all $s$,
\item
 $\boldd(X)_k\le0$.
\end{enumerate}
\end{prop}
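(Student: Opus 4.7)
The plan is to prove the three-way equivalence by treating (b) $\Leftrightarrow$ (c) as a direct bookkeeping statement and reducing (a) $\Leftrightarrow$ (b) to the two preceding lemmas (Lemma~\ref{lem:commcv1} and Lemma~\ref{lem:commcv3}), with the uniqueness of the Laurent expansion serving as the glue.

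First, the equivalence (b) $\Leftrightarrow$ (c) is immediate from the definition $\boldd(X)_k = -\min_s\{(\boldb(s))_k\}$ for $k \in \Kex$: the minimum is $\ge 0$ exactly when every $(\boldb(s))_k$ is $\ge 0$. This step is just unwinding definitions and needs no further argument.

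For (a) $\Rightarrow$ (b), I would invoke Lemma~\ref{lem:commcv3}: strong commutativity of $X$ with $M_k$ produces $\bolda, \boldc(r) \in \Z_{\ge 0}^\K$ with $\bolda_k = 0$ such that $[X \conv M(\bolda)] = \sum_r [M(\boldc(r))]$ in $K(\mathcal C_w)|_{q=1}$. Dividing by $[M(\bolda)]$ in $K(\mathcal C_w)[\mcluster^{-1}]|_{q=1}$ gives $[X] = \sum_r [M(\boldc(r) - \bolda)]$, and since $\bolda_k = 0$ the $k$-th coordinate of every exponent is $\boldc(r)_k \ge 0$. The Laurent monomials $\{[M(\boldb)]\}_{\boldb \in \Z^\K}$ form a basis of the specialized Laurent polynomial ring, so once duplicate exponents are collected the expansion is unique; hence the given exponents $\boldb(s)$ must also satisfy $(\boldb(s))_k \ge 0$.

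For (b) $\Rightarrow$ (a), I would invoke Lemma~\ref{lem:commcv1}. Choose $\bolda \in \Z_{\ge 0}^\K$ with $\bolda_k = 0$ and all other coordinates large enough that $\boldb(s) + \bolda \in \Z_{\ge 0}^\K$ for every $s$; this is possible precisely because each $(\boldb(s))_k$ is already non-negative, so no positive correction in the $k$-direction is required. Multiplying the given identity by $[M(\bolda)]$ yields $[X \conv M(\bolda)] = \sum_s [M(\boldb(s) + \bolda)]$ in $K(\mathcal C_w)|_{q=1}$ with all exponents in $\Z_{\ge 0}^\K$ and $\bolda_k = 0$, so Lemma~\ref{lem:commcv1} applies and gives the strong commutativity of $X$ with $M_k$. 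There is no serious obstacle: the only subtle point is the uniqueness of Laurent expansions used in (a) $\Rightarrow$ (b), and this is an elementary fact about the quantum torus at $q=1$.
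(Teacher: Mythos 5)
Your argument is correct and is essentially the paper's own (unwritten) proof: the paper simply remarks that the proposition follows from Lemma~\ref{lem:commcv1} and Lemma~\ref{lem:commcv3}, and your proposal fills in precisely the implicit details, namely (b)$\Leftrightarrow$(c) by definition of $\boldd(X)_k$, (a)$\Rightarrow$(b) via Lemma~\ref{lem:commcv3} together with uniqueness of Laurent expansion in the specialized torus, and (b)$\Rightarrow$(a) via Lemma~\ref{lem:commcv1} after clearing denominators with an $\bolda$ satisfying $\bolda_k=0$.
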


\Prop\label{prop:Mdivisible}
Let  $ \mcluster  =\set{M_j}{j\in \K}$ be a   monoidal cluster    in $\mathcal C_w$.
If $X$ is a simple in $\mathcal C_w$, $k \in \Kex$, and 
assume that $X$ strongly commutes with $M_k$.
Then, $-\boldd(X)_k$ is the largest non-negative integer $n$ such that
there exists a simple $Y$ such that $X \simeq Y\conv M_k^{\circ n}$.
\enprop
\begin{proof}
By the assumption, $\boldd(X)_k\le0$.
Assume that there exists a simple $Y$ such that $X \simeq Y \conv M_k^{\circ n}$.
We shall show $  -\boldd (  X)_k\ge n$.
We may assume that
$n>0$. Then $Y$ strongly commutes with $M_k$. 
By the above proposition, 
$[Y] = \sum_{s=1}^{N}[M(\boldb'_s)]$ for some $\boldb'_s \in \Z^J$ with $(\boldb'_s)_k \ge 0$ for all $s$.
Hence we have $[X] =[Y\conv M_k^{\circ n}]= \sum_{s=1}^{N}[M(\boldb'_s+n\bolde_k)]$ in $K(\mathcal C_w)[ \mcluster ^{-1}]|_{q=1}$, as desired.

Conversely, assume that $-\boldd(X)_k\ge n \ge0$.
Write
$[X\conv M(\bolda)] = \sum_{s\in \Sigma}[M(\boldb(s))]$ in $K(\mathcal C_w)[ \mcluster ^{-1}]|_{q=1}$. Since
$-\boldd(X)_k=\min\set {(\boldb(s)-\bolda)_k}{s\in\Sigma}\ge n$, we may assume that
$\bolda_k=0$ and $\boldb(s)_k\ge n$ for every $s\in \Sigma$.
Hence $[X]\cdot[M(\bolda)]=[X\conv M(\bolda)]$
is divisible by $[M_k]^n$ in $K(\Cw)$.
Note that any cluster variable is a prime element of 
$K(\Cw)|_{q=1}$ by \cite{GLS13}.
Since $[M(\bolda)]$ is not divisible by $[M_k]$ and
$[M_k]$ is prime, $[X]$ is divisible by $[M_k]^n$.
 Then, by Theorem \ref{thm:commcv},  we have 
$X \simeq  M_k^{\circ n}\conv Y$ for some simple module $Y$.
\end{proof}

\begin{corollary} \label{cor:denominator}
Let $\set{M_j}{j\in \K}$ be a   monoidal cluster   and 
let $[X]$ be  a cluster variable  which does not belong to $\set{M_j}{j\in \K}$. Then
\bnum
\item $\boldd(X)_k \ge 0$ for all $ k \in \Kex$.
\item If there is a seed containing  $X$ and $M_k$ for some $k \in \Kex$,  then $\boldd(X)_k = 0$.
\end{enumerate}
\end{corollary}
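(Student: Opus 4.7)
My plan is to read off both statements almost immediately from the dictionary between strong commutation and the sign of the denominator already established in Proposition~\ref{prop:commcv} and Proposition~\ref{prop:Mdivisible}, together with the fact that cluster variables are prime in $K(\Cw)|_{q=1}$.

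For part (i), I would fix $k\in\Kex$ and split on whether $X$ strongly commutes with $M_k$. If it does not, then Proposition~\ref{prop:commcv} gives $\boldd(X)_k>0$ immediately. So the real content is to show that when $X$ does strongly commute with $M_k$, one actually has $\boldd(X)_k=0$ (not merely $\le 0$). Assume for contradiction that $-\boldd(X)_k=n>0$. Proposition~\ref{prop:Mdivisible} produces a simple $Y$ with $X\simeq Y\conv M_k^{\circ n}$, and passing to classes gives $[X]=[Y]\cdot[M_k]^n$ in $K(\Cw)|_{q=1}$. Now I invoke the factoriality of $K(\Cw)|_{q=1}$ and the fact that both $[X]$ and $[M_k]$ are cluster variables, hence prime (as used in the proof of Lemma~\ref{lem:commcv1}, via \cite{GLS13}). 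Since the prime $[X]$ equals $[Y]\cdot[M_k]^n$ with $[M_k]$ a non-unit and $n\ge 1$, we must have $n=1$, $[Y]$ a unit, and $[X]$ associate to $[M_k]$. Because the units in $K(\Cw)|_{q=1}$ are (up to sign) trivial, this forces $X\simeq M_k$ up to grading shift, contradicting $X\notin\{M_j\}_{j\in\K}$.

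For part (ii), the hypothesis that some seed contains both $X$ and $M_k$ means $X$ and $M_k$ belong to a common monoidal cluster, and by the definition of a monoidal cluster in Theorem~\ref{thm:moncat} they strongly commute. Then Proposition~\ref{prop:commcv} yields $\boldd(X)_k\le 0$, and combining with (i) gives $\boldd(X)_k=0$.

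The only real obstacle is the factoriality/primality step in (i): making sure that the argument actually lives in the classical commutative ring $K(\Cw)|_{q=1}$ and that the conclusion ``$[X]$ associate to $[M_k]$ forces $X\simeq M_k$ as simple modules (up to grading shift)'' is justified. The first is fine because both relations are obtained after setting $q=1$; the second is the standard fact that the classes of self-dual simples form a $\Z$-basis of $K(\Cw)|_{q=1}$, so two such classes that differ by a unit $\pm 1$ must coincide. I do not see any obstruction beyond checking these bookkeeping points.
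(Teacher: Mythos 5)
Your proof is correct and takes essentially the same route as the paper: part (i) splits on whether $X$ strongly commutes with $M_k$, uses Proposition~\ref{prop:commcv} for the easy half, and for the hard half applies Proposition~\ref{prop:Mdivisible} together with the primality of cluster variables in $K(\Cw)|_{q=1}$ from \cite{GLS13}; part (ii) is $\boldd(X)_k\le 0$ from Proposition~\ref{prop:commcv} combined with (i). Your write-up is slightly more explicit than the paper's (which compresses the unit/associate bookkeeping into the phrase ``it cannot be factored into $X\simeq Y\conv M_k$''), but the ideas coincide.
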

\begin{proof}
(i) 
If $X$ doesn't commute with $M_k$, then  $\boldd(X)_k > 0$.
Assume that $X$ commutes with $M_k$. Then we have $\boldd(X)_k \le 0$. 
Since $[X]$ is a cluster variable and hence is a prime element, it cannot be factored into $X \simeq Y \conv M_k$. Thus we have $\boldd(X)_k=0$
 by Proposition~\ref{prop:Mdivisible}. 

\medskip\noi
(ii) Since $X$ and $M_k$ strongly commute 
with each other, we have $\boldd(X)_k\le0$. 
\end{proof}

\begin{remark}
Corollary \ref{cor:denominator} (i) is the Conjecture 7.4 (1) in \cite{FZ07}. 
Corollary \ref{cor:denominator} (ii) is a half of the Conjecture 7.4 (2) in \cite{FZ07}. 
\end{remark}

\begin{theorem}[see also Theorem~\ref{th:clusterch}]\label{th:clusterch1}
Let $X$ be a simple module in $\mathcal C_w$ and let  $ \mcluster    =\set{M_j}{j\in \K} $ be a  monoidal cluster   in $\mathcal C_w$. 
 Let $k \in \Kex$ and $\shs'=\mu_k(\shs)$. Assume that  $X$ strongly commutes with $M_j$ for all $ j \in \K\setminus \{k\}$.
Then either $X$ strongly commutes with $M_k$ or it strongly commutes with $M_k'$, the mutation of $M_k$  at $k$. 
 Equivalently, $[X]$ is a cluster monomial in the seed $[\shs]$ or a cluster monomial in the seed $[\shs']$ \ro up to a power of $q$\rf.

\end{theorem}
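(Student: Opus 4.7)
The plan is to invoke Lemma~\ref{lem:commcv2} and exploit the fact that, under the hypothesis of the theorem, $X\conv M(\bolda)$ is simple for any $\bolda\in\Z_{\ge0}^\K$ with $\bolda_k=0$. Lemma~\ref{lem:commcv2} produces such a vector $\bolda$, together with $\boldb(s),\boldc(t)\in\Z_{\ge0}^\K$ satisfying $\boldc(t)_k>0$, so that
\[
[X\conv M(\bolda)] \;=\; \sum_s [M(\boldb(s))] \;+\; \sum_t [M'(\boldc(t))]
\]
in $K(\mathcal C_w)|_{q=1}$, where $M'(\boldc)$ denotes the cluster monomial of the mutated seed $\shs'=\mu_k(\shs)$ indexed by $\boldc$. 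First I would combine repeated terms so that the $\boldb(s)$'s are pairwise distinct and similarly for the $\boldc(t)$'s, producing positive integer coefficients on the right. Because $X$ strongly commutes with every $M_j$ ($j\ne k$) and $\bolda_k=0$, iterated application of \cite[Proposition 3.2.10]{KKKO18} shows that $X$ and the real simple module $M(\bolda)$ strongly commute, so $X\conv M(\bolda)$ is simple by \cite[Lemma 3.2.3]{KKKO18}.

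The key step will be to show that the right hand side of the above identity consists of a single term with coefficient $1$. Since the isomorphism classes of self-dual simple modules form a $\Z[q^{\pm1}]$-basis of $K(\mathcal C_w)$, they remain linearly independent after specialization at $q=1$, and distinct $\boldb(s)$'s (respectively distinct $\boldc(t)$'s) parametrize non-isomorphic cluster monomials in $\shs$ (respectively $\shs'$). What must still be ruled out is an isomorphism $M(\boldb)\simeq M'(\boldc)$ for some $\boldc$ with $\boldc_k>0$. Such an identification would translate, via the exchange relation $[M_k][M_k']=[U]+[V]$, into a Laurent-monomial equality in $\shs$ which, after clearing denominators, becomes
\[
[M_k]^{\boldb_k+\boldc_k}\prod_{j\ne k}[M_j]^{\boldb_j-\boldc_j}
\;=\;([U]+[V])^{\boldc_k};
\]
the right hand side is independent of $[M_k]$ while the left has positive degree $\boldb_k+\boldc_k\ge\boldc_k>0$ in $[M_k]$, a contradiction. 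Hence the simple classes appearing on the right hand side of the Lemma~\ref{lem:commcv2} identity are pairwise distinct, and since the left hand side is a single simple class, exactly one summand can survive, with coefficient $1$.

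It then remains to handle the two cases. If the surviving term is some $[M(\boldb(s_0))]$, then $X\conv M(\bolda)\simeq M(\boldb(s_0))$ up to a grading shift, and the $\delta$-vector argument from the proof of Theorem~\ref{th:clusterch} (using $\de(M_k',M_i)=\delta_{ki}$ together with \cite[Corollary 3.7]{KKKO15}) gives $X\simeq M(\boldb(s_0)-\bolda)$, a cluster monomial in $[\shs]$, so $X$ strongly commutes with $M_k$. If the surviving term is some $[M'(\boldc(t_0))]$, then since $\bolda_k=0$ the module $M(\bolda)$ is a cluster monomial in $\shs'$ as well, and running the same $\delta$-argument inside the seed $\shs'$ produces $X\simeq M'(\boldc(t_0)-\bolda)$, a cluster monomial in $[\shs']$, so $X$ strongly commutes with $M_k'$. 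The step I expect to be the main obstacle is the distinctness claim $M(\boldb)\not\simeq M'(\boldc)$ for $\boldc_k>0$; once this is in place, the theorem follows quickly from Lemma~\ref{lem:commcv2} together with the single-term reduction.
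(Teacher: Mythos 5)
Your proof is correct in outline and takes a genuinely different route from the paper's. The paper does not invoke Lemma~\ref{lem:commcv2} as a black box: it assumes $X$ does not strongly commute with $M_k$ (so $\boldd(X)_k>0$), clears denominators by multiplying by $[M_k]^{\boldd(X)_k}$, multiplies further by $[M_k'^{\circ\boldd(X)_k}]$, and identifies a distinguished term $[M'(\boldb(s_0)+\boldd(X)_k\bolde_k)]$ with $\boldb(s_0)_k=0$ (which exists by the definition of $\boldd(X)_k$) in order to conclude. Your route instead applies Lemma~\ref{lem:commcv2} and then pins down a single surviving summand via the algebraic independence of the cluster variables; the new ingredient you supply --- that $M(\boldb)\not\simeq M'(\boldc)$ whenever $\boldc_k>0$, proved by degree-counting in $[M_k]$ after clearing denominators in the exchange relation --- is correct and is exactly what makes this shortcut work. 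What this buys you is that the case analysis becomes transparent (surviving term is $M(\boldb(s_0))$ versus $M'(\boldc(t_0))$), at the cost of the extra distinctness lemma that the paper's more hands-on manipulation avoids.

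One small step needs repair. You invoke ``the $\delta$-vector argument from Theorem~\ref{th:clusterch}'' to deduce $X\simeq M(\boldb(s_0)-\bolda)$. In Theorem~\ref{th:clusterch} that argument only handles $j\in\Kex$; the frozen coordinates are controlled there because the $\bolda$ coming from Lemma~\ref{lem:LP} satisfies $\bolda_j=0$ for $j\in\Kfr$. But the $\bolda$ produced by Lemma~\ref{lem:commcv2} is allowed to have nonzero frozen entries (its construction multiplies by powers of $U$, which may involve frozen variables), so you cannot conclude $\boldb(s_0)_j\ge\bolda_j$ for $j\in\Kfr$ by citing that argument verbatim. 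Two easy fixes: (a) from $[X]\cdot[M(\bolda)]=[M(\boldb(s_0))]$, use that $K(\Cw)|_{q=1}$ is factorial with the $[M_j]$ distinct primes (as the paper cites from \cite{GLS13}), so $[M(\bolda)]$ divides $[M(\boldb(s_0))]$ and $\boldb(s_0)-\bolda\in\Z_{\ge 0}^\K$ in all coordinates; or, cleaner still, (b) once you know $[X\conv M(\bolda)]=[M(\boldb(s_0))]$ with $\bolda_k=0$, Lemma~\ref{lem:commcv1} directly gives that $X$ strongly commutes with $M_k$, so $X$ strongly commutes with all cluster variables and Theorem~\ref{th:clusterch} finishes. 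The symmetric fix applies in the $[M'(\boldc(t_0))]$ case using Lemma~\ref{lem:commcv1} for the cluster $\mu_k(\mcluster)$.
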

\begin{proof}
 For $\bolda\in\Z_{\ge0}^\K$, we will denote by
$M'(\bolda)$ the cluster monomial in $\mu_k(\shs)$. 
By the assumption we have $\boldd(X)_j \le 0$ for all $ j \in \K\setminus \{k\}$. 
Assume that  $X$ does not strongly commute with $M_k$.  Then  we have $\boldd(X)_k >0$.
It follows that 
$$[X \conv M_k^{\circ \boldd(X)_k }] =\sum_s [M(\boldb(s))]
\qt{in $K(\Cw)\vert_{q=1}$}$$
for some $\boldb(s) \in \Z_{\ge 0}^{\K}$.
Multiplying ${M_k'}^{\boldd(X)_k }$, we obtain
$$[X]\cdot\bl [U]+[V]\br^{ \boldd(X)_k } =\sum_s [M(\boldb(s))]\cdot[M'(\boldd(X)_k\bolde_k)],$$
where $[M_k]\cdot[M_k']=[U]+[V]$. 
Note that $U$ and $V$ are cluster monomials in the  monoidal cluster   $\mu_k( \mcluster  )$ and they strongly commute with $X$. Hence the left-hand side of the above equation is a sum of elements of the form
$[X \conv M'(\bolda)]$ for some $\bolda \in \Z_{\ge 0}^\K$ with $\bolda_k=0$, 
which is a class of simple module.

On the other hand, by the definition of $\boldd(X)_k$, there exists $s_0$ such that $\boldb(s_0)_k=0$ and hence $M(\boldb(s_0))=M'(\boldb(s_0))$.
 Thus there exists $\bolda\in \Z_{\ge 0}^{\K}$ such that  $\bolda_k=0$ and 
   $$[X \conv M'(\bolda)]=[M'(\boldb(s_0)+\boldd(X)_k\bolde_k)].$$ 
Thus we obtain 
$$[X]=[M'(\boldb(s_0)+\boldd(X)_k\bolde_k -\bold a)]$$
in $K(\mathcal C_w)[\mu_k( \mcluster  )^{-1}]\vert_{q=1}$. 
Since $X \in \mathcal C_w$, we conclude that $\boldb(s_0)+\boldd(X)_k\bolde_k -\bold a \in \Z_{\ge 0}^\K$.
 Hence $[X]$ is a cluster monomial in  the cluster  $[\mu_k( \mcluster  )]$. 
\end{proof}

\medskip
\begin{prop} \label{prop:commu}
Let $\set{M_j}{j\in \K}$ be a   monoidal cluster   in $\mathcal C_w$, $k \in \Kex$, and  $X$ a simple module in $\mathcal C_w$. 
Assume that $X$ 
does not strongly commute with  $M_k$.
Then $\boldd(X)_k$ is the smallest positive 
integer $n$ such that 
every simple subquotient of $X\conv M_k^{\circ n}$ strongly commutes with $M_k$. 
\end{prop}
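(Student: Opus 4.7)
The plan is to prove both directions simultaneously by setting $n = \mathbf{d}(X)_k > 0$ (positive by Proposition~\ref{prop:commcv}, since $X$ does not strongly commute with $M_k$) and showing first that every simple subquotient of $X\conv M_k^{\circ n}$ strongly commutes with $M_k$, and then that this fails for any positive $n' < n$. The key tool is the uniqueness of the Laurent expansion in $K(\mathcal{C}_w)[\mcluster^{-1}]|_{q=1}$, which is the Laurent polynomial ring $\Z[[M_j]^{\pm1} : j \in \K]$: the classes $\{[M(\boldb)]|_{q=1}\}_{\boldb \in \Z^\K}$ form a $\Z$-basis.

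For the sufficiency direction, write $[X] = \sum_s [M(\boldb(s))]$ in $K(\mathcal{C}_w)[\mcluster^{-1}]|_{q=1}$ with $(\boldb(s))_k \geq -n$ for all $s$. Then $[X \conv M_k^{\circ n}] = \sum_s [M(\boldb(s) + n\bolde_k)]$ with all $k$-components non-negative. Choose $\bolda \in \Z_{\geq 0}^\K$ with $\bolda_k = 0$ such that $\boldb(s) + n\bolde_k + \bolda \in \Z_{\geq 0}^\K$ for every $s$. Then the identity lifts to $K(\Cw)|_{q=1}$: namely $[X \conv M_k^{\circ n} \conv M(\bolda)]|_{q=1} = \sum_s [M(\boldb(s) + n\bolde_k + \bolda)]|_{q=1}$. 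By the Sublemma following Lemma~\ref{lem:commcv2}, the simple composition factors of $X \conv M_k^{\circ n} \conv M(\bolda)$ are precisely (with multiplicity) the modules $M(\boldb(s) + n\bolde_k + \bolda)$. Since convolution with the real simple module $M(\bolda)$ is exact, for any simple subquotient $Y$ of $X \conv M_k^{\circ n}$, the composition factors of $Y \conv M(\bolda)$ form a sub-multiset of these, so every one of them has $k$-component equal to $(\boldb(s))_k + n \geq 0$. Because $\bolda_k = 0$, Proposition~\ref{prop:commcv} applied to $Y$ forces $Y$ to strongly commute with $M_k$.

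For the minimality direction, fix $0 < n' < n$ and suppose for contradiction that every simple subquotient of $X \conv M_k^{\circ n'}$ strongly commutes with $M_k$. In $K(\Cw)|_{q=1}$ we may write $[X \conv M_k^{\circ n'}] = \sum_Y m_Y[Y]$ as a non-negative integer combination of simple composition factors; by Proposition~\ref{prop:commcv}, each such $[Y]$ admits a non-negative expansion $[Y] = \sum_t [M(\boldc_{Y,t})]$ in $K(\mathcal{C}_w)[\mcluster^{-1}]|_{q=1}$ with $(\boldc_{Y,t})_k \geq 0$. Hence every monomial $[M(\boldc)]$ appearing in $[X \conv M_k^{\circ n'}]$ through this route has $\boldc_k \geq 0$. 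On the other hand, by definition of $n$, there exists $s_0$ with $(\boldb(s_0))_k = -n$, so $[M(\boldb(s_0) + n'\bolde_k)]$ appears with positive coefficient in the expansion $[X \conv M_k^{\circ n'}] = \sum_s [M(\boldb(s) + n'\bolde_k)]$, and its $k$-component is $n' - n < 0$. Since the monomials form a $\Z$-basis and coefficients in both expansions are non-negative, no cancellation is possible, yielding the desired contradiction.

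The only mildly delicate point is the choice of $\bolda$ in the sufficiency step and the exactness of convolution by $M(\bolda)$, but both are standard; the genuine content lies in combining the uniqueness of the Laurent expansion with the positivity of composition factor multiplicities, so that a single Laurent monomial with negative $k$-exponent cannot be reconstituted from positive combinations of monomials with non-negative $k$-exponents. Once this positivity dichotomy is in place, both inequalities $\boldd(X)_k \geq n'$ and the sharpness at $n = \boldd(X)_k$ follow at once from Proposition~\ref{prop:commcv}.
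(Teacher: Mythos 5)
Your proof is correct and takes essentially the same approach as the paper: both arguments write $[X\conv M(\bolda)]=\sum_s[M(\boldb(s))]$ at $q=1$, use the positivity of the coefficients and the fact that the classes $[M(\boldb)]$ form a $\Z$-basis to match the composition factors of $X\conv M_k^{\circ n}\conv M(\bolda)$ with the cluster monomials $M(\boldb(s)+n\bolde_k)$, and then apply Proposition~\ref{prop:commcv} to each composition factor. The only cosmetic difference is that you separate sufficiency and minimality (the latter by contradiction), whereas the paper partitions $\Sigma$ into $\Sigma_t$ and reads off the iff statement ``every subquotient commutes $\iff n\ge\boldd(X)_k$'' directly.
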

\begin{proof}
Note that $\boldd(X)_k >0$, since $X$ 
does not strongly commute with  $M_k$.

Write $[X\conv M(\bolda)] = \sum_{s \in \Sigma} [M(\boldb(s))]$ in $K(\Cw)\vert_{q=1}$
with $\bolda, \boldb(s) \in \Z_{\ge0}^J$.

Assume that $n\in\Z_{>0}$ and
every subquotient of $X\conv M_k^{\circ n}$ strongly commutes with $M_k$.
Write $[X \conv  M_k^{\circ  n }]=\sum_t [S_t]$ 
in $K( \mathcal C_w )\vert_{q=1}$ where $S_t$ are simple modules in $\mathcal C_w$.
Then 
$$\sum_{s \in \Sigma}[M(\boldb(s) + n\bolde_k)] 
=[X\conv M(\bolda) \conv M_k^{\circ n}]=\sum_t [S_t\conv M(\bolda)] \qt{in $K(\Cw))\vert_{q=1}$.}$$
Hence  for each $t$, there exists a subset $\Sigma_t \subset \Sigma$ such that 
$$[S_t\conv M(\bolda)] = \sum_{s \in \Sigma_{t}} [M(\boldb(s) +n\bolde_k)].$$ 
We may assume that $\Sigma=\bigsqcup_{\;t}\Sigma_t$.

Thus $S_t$ strongly commutes with $M_k$ if and only if 
$\boldb(s)_k +n-\bolda_k\ge  0$ for all $s\in \Sigma_t$ by Proposition \ref{prop:commcv}.

Hence $S_t$  strongly commutes with $M_k$ for any $t$
if and only if $\boldb(s)_k +n-\bolda_k\ge  0$ for all $s\in \Sigma$, i.e., 
$\boldd(X)_k=\max\set{\bolda_k-\boldb(s)_k}{s\in\Sigma}\ge n$.
\end{proof}

This characterization of $\boldd(X)_k$ (Propositions~\ref{prop:commcv},
\ref{prop:Mdivisible} and \ref{prop:commu}) immediately implies
\begin{corollary}
Let $[M_k]$ be  a cluster variable and  let $X$ be a simple module which is not divisible by $M_k$.
Then $\boldd(X)_k$ does not depend on the choice of a cluster containing $M_k$.
\end{corollary}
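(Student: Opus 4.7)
The plan is to invoke the three intrinsic characterizations of $\boldd(X)_k$ given just before, namely Propositions~\ref{prop:commcv}, \ref{prop:Mdivisible} and \ref{prop:commu}, and observe that each characterization is phrased purely in terms of the pair $(X, M_k)$ without any reference to the other cluster variables. This immediately gives the desired independence.

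More concretely, I would split into two cases according to whether $X$ and $M_k$ strongly commute, which is an intrinsic relation between the two modules. In the first case, where $X$ strongly commutes with $M_k$, Proposition~\ref{prop:commcv} gives $\boldd(X)_k \le 0$, and Proposition~\ref{prop:Mdivisible} identifies $-\boldd(X)_k$ with the largest non-negative integer $n$ such that $X \simeq Y \conv M_k^{\circ n}$ for some simple $Y$. Since $X$ is not divisible by $M_k$, this largest $n$ must equal $0$, so $\boldd(X)_k = 0$. Both the condition and the conclusion here depend only on $X$ and $M_k$.

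In the second case, where $X$ does not strongly commute with $M_k$, Proposition~\ref{prop:commu} asserts that $\boldd(X)_k$ is the smallest positive integer $n$ such that every simple subquotient of $X \conv M_k^{\circ n}$ strongly commutes with $M_k$. Again, this quantity is defined entirely from $X$ and $M_k$, with no reference to the remaining cluster variables.

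Since each of the two cases is settled by a characterization that only uses $X$ and $M_k$, the value of $\boldd(X)_k$ depends only on $X$ and $M_k$, proving the corollary. There is no real obstacle here: the work has already been done in the preceding propositions, and the corollary is essentially a direct bookkeeping consequence; the only thing to be verified is that the dichotomy ``strongly commutes or not'' is itself cluster-independent, which is trivial since strong commutativity is a property of a pair of simple modules.
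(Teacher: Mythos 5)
Your proof is correct and takes essentially the same approach as the paper, whose own proof consists of a single sentence pointing to Propositions~\ref{prop:commcv}, \ref{prop:Mdivisible} and \ref{prop:commu} as an ``intrinsic'' characterization of $\boldd(X)_k$ in terms of the pair $(X,M_k)$. Your case split (strongly commuting or not) and the observation that the dichotomy itself is cluster-independent is exactly the bookkeeping the paper leaves implicit; a small aside is that the ``not divisible by $M_k$'' hypothesis is not actually needed for the independence argument (Proposition~\ref{prop:Mdivisible} already gives an intrinsic formula in the divisible case too), and appears in the corollary mainly to match the formulation of Conjecture 7.4(3) of~\cite{FZ07}.
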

\begin{remark} It is Conjecture 7.4 (3) in \cite{FZ07}.
Note that \cite[Conjecture 7.4]{FZ07} is proved for arbitrary skew-symmetrizable cluster algebras in \cite{CL182} in a different way.
\end{remark}

\end{document}